\theoremstyle{plain}
\newtheorem{thm}{Theorem}[section]
\newtheorem{lem}[thm]{Lemma}
\newtheorem{cor}[thm]{Corollary}
\newtheorem{prop}[thm]{Proposition}
\theoremstyle{definition}
\newtheorem{defn}[thm]{Definition}
\newtheorem{q}[thm]{Question}
\newtheorem{ex}[thm]{Example}
\newtheorem{notn}[thm]{Notation}
\newtheorem{rem}[thm]{Remark}
\theoremstyle{remark}
\newtheorem{step}{Step}
\numberwithin{equation}{section}
\newcommand{\N}{\mathbb N}
\newcommand{\Z}{\mathbb Z}
\newcommand{\Q}{\mathbb Q}
\newcommand{\R}{\mathbb R}
\newcommand{\C}{\mathbb C}
\newcommand{\T}{\mathbb T}
\renewcommand{\O}{\Omega}
\newcommand{\CC}{\mathcal C}
\newcommand{\FF}{\mathcal F}
\newcommand{\OO}{\mathcal O}
\newcommand{\QQ}{\mathcal Q}
\newcommand{\BQ}{\overline\QQ}
\newcommand{\UU}{\mathcal U}
\newcommand{\VV}{\mathcal V}
\newcommand{\WW}{\mathcal W}
\newcommand{\rt}{\textup{rt}}
\newcommand{\lt}{\textup{lt}}
\newcommand{\axb}{{\alpha^{\textup{aff}}}}
\newcommand{\what}{\widehat}
\newcommand{\wilde}{\widetilde}
\newcommand{\ghat}{\what G}
\newcommand{\hhat}{\what H}
\newcommand{\ohat}{\what\O}
\newcommand{\nhat}{\what N}
\newcommand{\nshat}{\what{N^*}}
\newcommand{\under}{\backslash}
\newcommand{\iso}{\overset{\cong}{\longrightarrow}}
\newcommand{\midtext}[1]{\quad\text{#1}\quad}
\DeclareMathOperator{\aut}{Aut}
\DeclareMathOperator{\ad}{Ad}
\newcommand{\id}{\textup{id}}
\newcommand{\inv}{^{-1}}
\newcommand{\secref}[1]{Section~\ref{#1}}
\newcommand{\subsecref}[1]{Subsection~\ref{#1}}
\newcommand{\thmref}[1]{Theorem~\ref{#1}}
\newcommand{\corref}[1]{Corollary~\ref{#1}}
\newcommand{\lemref}[1]{Lemma~\ref{#1}}
\newcommand{\propref}[1]{Proposition~\ref{#1}}
\newcommand{\remref}[1]{Remark~\ref{#1}}
\newcommand{\qref}[1]{Question~\ref{#1}}
\newcommand{\exref}[1]{Example~\ref{#1}}
\newcommand{\appxref}[1]{Appendix~\ref{#1}}
\begin{document}

\title{Cuntz-Li algebras from $a$-adic numbers}

\author[Kaliszewski]{S.~Kaliszewski}
\address{School of Mathematical and Statistical Sciences
\\Arizona State University
\\Tempe, Arizona 85287}
\email{kaliszewski@asu.edu}
\author[Omland]{Tron Omland}
\address{Department of Mathematical Sciences
\\Norwegian University of Science and Technology (NTNU)
\\NO-7491 Trondheim, Norway}
\email{tron.omland@math.ntnu.no}
\author[Quigg]{John Quigg}
\address{School of Mathematical and Statistical Sciences
\\Arizona State University
\\Tempe, Arizona 85287}
\email{quigg@asu.edu}

\subjclass[2010]{Primary 46L05, 46L55; Secondary 11R04, 11R56, 11S82}

\keywords{$C^*$-dynamical system, $a$-adic number, Cuntz-Li algebra}

\date{December 14, 2015}

\begin{abstract}
The $a$-adic numbers are those groups that arise as Hausdorff completions of noncyclic subgroups of the rational numbers $\Q$. We give a crossed product construction of (stabilized) Cuntz-Li algebras coming from the $a$-adic numbers and investigate the structure of the associated algebras. In particular, these algebras are in many cases Kirchberg algebras in the UCT class. Moreover, we prove an $a$-adic duality theorem, which links a Cuntz-Li algebra with a corresponding dynamical system on the real numbers.

The paper also contains an appendix where a nonabelian version of the ``subgroup of dual group theorem'' is given in the setting of coactions.
\end{abstract}

\maketitle

\section*{Introduction}

In \cite{cuntzq} Cuntz introduces the $C^*$-algebra $\QQ_{\N}$ associated with the $ax+b$-semigroup over the natural numbers, that is, $\Z\rtimes\N^{\times}$, where $\N^{\times}$ acts on $\Z$ by multiplication. It is defined as the universal $C^*$-algebra generated by isometries $\{s_n\}_{n\in\N^{\times}}$ and a unitary $u$ satisfying the relations
\[
s_ms_n=s_{mn},\quad s_nu=u^ns_n,
\midtext{and}
\sum_{k=0}^{n-1}u^ks_n^{\phantom{*}}s_n^*u^{-k}=1
\quad\text{for $m,n\in\N^{\times}$.}
\]
The algebra $\QQ_{\N}$ may be concretely realized on $\ell^2(\Z)$ equipped with the standard orthonormal basis $\{\delta_n\}_{n\in\Z}$ by
\[
s_m(\delta_n)=\delta_{mn}\midtext{and}u(\delta_n)=\delta_{n+1}.
\]
Furthermore, $\QQ_{\N}$ is shown to be simple and purely infinite
and can also be obtained as a semigroup crossed product
\[
C(\what{\Z})\rtimes(\Z\rtimes\N^{\times})
\]
for the natural $ax+b$-semigroup action of $\Z\rtimes\N^{\times}$ on the finite integral adeles $\what{\Z}=\prod_{p\text{ prime}}\Z_p$ (i.e., $\what{\Z}$ is the profinite completion of $\Z$). Its stabilization $\BQ_{\N}$ is isomorphic to the ordinary crossed product
\[
C_0(\mathcal{A}_f)\rtimes(\Q\rtimes\Q^{\times}_{+}),
\]
where $\Q^{\times}_{+}$ denotes the multiplicative group of positive rationals and $\mathcal{A}_f$ denotes the finite adeles,
i.e., the restricted product $\prod_{p\text{ prime}}'\Q_p=\prod_{p\text{ prime}} (\Q_p,\Z_p)$. The action of $\Q\rtimes\Q^{\times}_{+}$ on $\mathcal{A}_f$ is the natural $ax+b$-action. This crossed product is the minimal automorphic dilation of the semigroup crossed product above (see Laca \cite{lac:corner}).

Replacing $\N^{\times}$ with $\Z^{\times}$ gives rise to the $C^*$-algebra $\QQ_{\Z}$ of the ring $\Z$.
This approach is generalized to certain integral domains by Cuntz and Li \cite{CLintegral} 
and then to more general rings by Li \cite{Li-Ring}. 

In \cite{LarsenLi} Larsen and Li define the $2$-adic ring algebra of the integers $\QQ_2$, attached to the semigroup $\Z\rtimes \lvert 2\rangle$, where $\lvert 2\rangle=\{2^i:i\geq 0\}\subset\N^{\times}$ acts on $\Z$ by multiplication. It is the universal $C^*$-algebra generated by an isometry $s_2$ and a unitary $u$ satisfying the relations
\[
s_2u^k=u^{2k}s_2\midtext{and}s_2^{\phantom{*}}s_2^*+us_2^{\phantom{*}}s_2^*u^*=1.
\]
The algebra $\QQ_2$ shares many structural properties with $\QQ_{\N}$. It is simple, purely infinite and has a semigroup crossed product description. Its stabilization $\BQ_2$ is isomorphic to its minimal automorphic dilation, which is the crossed product
\[
C_0(\Q_2)\rtimes \bigl(\Z[\tfrac{1}{2}]\rtimes\langle 2\rangle\bigr).
\]
Here, $\Z[\tfrac{1}{2}]$ denotes the ring extension of $\Z$ by $\tfrac{1}{2}$, $\langle 2\rangle$ the subgroup of the positive rationals $\Q^{\times}_{+}$ generated by $2$ and the action of $\Z[\frac{1}{2}]\rtimes\langle 2\rangle$ on $\Q_2$ is the natural $ax+b$-action. 

Both $\mathcal{A}_f$ and $\Q_2$ are examples of groups of so-called $a$-adic numbers, defined by a doubly infinite sequence $a=(\dotsc,a_{-2},a_{-1},a_0,a_1,a_2,\dotsc)$ with $a_i\geq 2$ for all $i\in\Z$. For example, if $p$ is a prime number, the group $\Q_p$ of $p$-adic numbers is associated with the sequence $a$ given by $a_i=p$ for all $i$.

Our goal is to construct $C^*$-algebras associated with the $a$-adic numbers and show that these algebras provide a family of examples that under certain conditions share many structural properties
with $\QQ_2$, $\QQ_{\N}$, and also the ring $C^*$-algebras of Cuntz and Li. In particular, since $\N^\times$ is generated by the set of prime numbers, we can associate $\QQ_{\N}$ with the set of all primes, and $\QQ_2$ with the set consisting of the single prime $2$. In the same way, one can construct algebras $\QQ_P$ associated with any nonempty subset $P$ of the prime numbers, with similar generators and relations as described above. The stabilized algebras $\BQ_P$ provide one interesting class of $C^*$-algebras coming from the $a$-adic numbers.

Our approach is inspired by \cite{KLQcuntz}, that is, we begin with a crossed product and use the classical theory of $C^*$-dynamical systems to prove our results, instead of the generators and relations as in the papers of Cuntz, Li, and Larsen. 
Therefore, our construction only gives analogues of the stabilized algebras $\BQ_{\N}$ and $\BQ_2$.

Even though the $C^*$-algebras associated with $a$-adic numbers are closely related to the ring $C^*$-algebras of Cuntz and Li,
they are not a special case of these (except in the finite adeles case). Also, our approach does not fit in general into the framework of \cite{KLQcuntz}.

One of the main results in this paper is an $a$-adic duality theorem (Theorem~\ref{a-adic-duality}), which generalizes the $2$-adic duality theorem \cite[Theorem~7.5]{LarsenLi} and the analogous result of Cuntz \cite[Theorem~6.5]{cuntzq}. In the proof, we only apply crossed product techniques, and not the groupoid equivalence as in \cite{LarsenLi}. Another advantage with this strategy is that we in a natural way obtain a concrete bimodule for the Morita equivalence.

In the first section we describe the $a$-adic numbers $\O$ as the Hausdorff completion of a subgroup $N$ of $\Q$ and explain that this approach coincides with the classical one in Hewitt and Ross \cite{hr}. Then we go on and introduce the Cuntz-Li algebras associated with a sequence $a$, that is, coming from an $ax+b$-action of $N\rtimes H$ on $\O$ for a certain multiplicative group $H$ contained in $N$, and show that these algebras in many cases have several nice properties. 

The proof of \thmref{a-adic-duality} relies especially on two other results; a duality result for groups in \secref{dual-groups} describing the dual group $\nhat$ of $N$ for any noncyclic subgroup $N$ of $\Q$, and the ``subgroup of dual group theorem'', that we prove in a more general setting in \appxref{appendix}.

Finally, in \secref{isomorphisms} we characterize the $a$-adic numbers up to isomorphism, and give some isomorphism invariants for the associated Cuntz-Li algebras.

\vspace{1em}

This work was initiated during the second author's visit at Arizona State University, Spring 2012, and he would like to thank the other authors for their hospitality. A summary of the results of this paper is published in the conference proceedings \cite{faroes}.

The second author was partially supported by the Research Council of Norway.

\section{\texorpdfstring{The $a$-adic numbers}{The a-adic numbers}}

Let $a=(\dotsc,a_{-2},a_{-1},a_0,a_1,a_2,\dotsc)$ be a doubly infinite sequence of natural numbers with $a_i\geq 2$ for all $i\in\Z$. Let the sequence $a$ be arbitrary, but fixed.

We use Hewitt and Ross \cite[Section~10 and~25]{hr} as our main reference (see also \cite[12.3.35]{Palmer2}) and define the $a$-adic numbers $\O$ as the group of sequences
\[
\left\{ x=(x_i)\in\prod_{i=-\infty}^{\infty}\{0,1,\dotsc,a_i-1\} : x_i=0\text{ for }i<j\text{ for some }j\in\Z\right\}
\]
under addition with carry, that is, the sequences have a first nonzero entry and addition is defined inductively. Its topology is generated by the subgroups $\{\OO_j:j\in\Z\}$, where
\[
\OO_j=\{x\in\O:x_i=0\text{ for }i<j\}.
\]
This makes $\O$ a totally disconnected, locally compact Hausdorff abelian group. The group $\Delta$ of $a$-adic integers is defined as $\Delta=\OO_0$. It is a compact, open subgroup, and a maximal compact ring in $\O$ with product given by multiplication with carry. On the other hand, $\O$ itself is not necessarily a topological ring (see \thmref{herman-thm}).

Define the $a$-adic rationals $N$ as the additive subgroup of $\Q$ given by
\[
N=\left\{\frac{j}{a_{-1}\dotsm a_{-k}}:j\in\Z, k\geq 1\right\}.
\]
In fact, all noncyclic additive subgroups of $\Q$ containing $\Z$ are of this form (see \lemref{supernatural} below). There is an injective homomorphism
\[
\iota\colon N\hookrightarrow\O
\]
determined by
\[
\Bigl(\iota\Bigl(\frac{1}{a_{-1}\dotsm a_{-k}}\Bigr)\Bigr)_{-j}=\delta_{jk}.
\]
Moreover, $\iota(N)$ is the dense subgroup of $\O$ comprising the sequences with only finitely many nonzero entries. This map restricts to an injective ring homomorphism (denoted by the same symbol)
\[
\iota\colon\Z\hookrightarrow\Delta
\]
with dense range. Henceforth, we will suppress the $\iota$ and identify $N$ and $\Z$ with their image in $\O$ and $\Delta$, respectively.

Now let $\UU$ be the family of all subgroups of $N$ of the form $\tfrac{m}{n}\Z$, where $m$ and $n$ are natural numbers such that $m$ divides $a_0\dotsm a_j$ for some $j\geq 0$ and $n$ divides $a_{-1}\dotsm a_{-k}$ for some $k\geq 1$. Then $\UU$
\begin{enumerate}
\item is downward directed, that is, for all $U,V\in\UU$ there exists $W\in\UU$ such that $W\subset U\cap V$,
\item is separating, that is,
\[\bigcap_{U\in\UU} U=\{e\},\]
\item has finite quotients, that is, $\lvert U/V\rvert<\infty$ whenever $U,V\in\UU$ and $V\subset U$,
\end{enumerate}
and the same is also true for
\[
\VV=\{U\cap\Z:U\in\UU\}.
\]
In fact, both $\UU$ and $\VV$ are closed under intersections, since
\[
\frac{m}{n}\Z\cap\frac{m'}{n'}\Z=\frac{\operatorname{lcm}{(m,m')}}{\operatorname{gcd}{(n,n')}}\Z.
\]
It is a consequence of (i)-(iii) above that the collection of subgroups $\UU$ induces a locally compact Hausdorff topology on $N$.
Denote the Hausdorff completion of $N$ with respect to this topology by $\overline{N}$. Then
\[
\overline{N}\cong\varprojlim_{U\in\UU}N/U.
\]
Next, for $j\ge 0$ define
\[
U_j=\begin{cases}
\Z &\text{if } j=0,\\
a_0\cdots a_{j-1}\Z &\text{if } j\ge 1,
\end{cases}
\]
and set
\[
\WW=\{U_j:j\geq 0\}\subset\VV\subset\UU.
\]
Note that $\WW$ is also separating and closed under intersections. The closure of $U_j$ in $\O$ is $\OO_j$, so 
\[
\O/\OO_j\cong N/U_j\midtext{and}\Delta/\OO_j\cong \Z/U_j
\quad\text{for all $j\geq 0$}.
\]
Next, let
\[
\tau_j\colon\O\to N/U_j
\]
denote the quotient map for $j\geq 0$, and identify $\tau_j(x)$ with the truncated sequence $x^{(j-1)}$, where $x^{(j)}$ is defined for all $j\in\Z$ by
\[
(x^{(j)})_i=\begin{cases}x_i & \text{ for }i\leq j, \\ 0 & \text{ for }i> j. \end{cases}
\]
We find it convenient to use the standard construction of the inverse limit of the system $\{N/U_j,\pmod{a_j}\}$:
\[
\varprojlim_{j\geq 0}N/U_j = \left\{ x=(x_i)\in\prod_{i=0}^{\infty}N/U_i : x_i=x_{i+1}\pmod{a_i}\right\},
\]
and then the product $\tau\colon\O\to\varprojlim_{j\geq 0}N/U_j$ of the truncation maps $\tau_j$, given by
\[
\tau(x)=(\tau_0(x),\tau_1(x),\tau_2(x),\dotsc), 
\]
is an isomorphism. Furthermore, we note that $\WW$ is cofinal in $\UU$. Indeed, for all $U=\tfrac{m}{n}\Z\in\UU$, if we choose $j\geq 0$ such that $m$ divides $a_0\dotsm a_j$ then we have $\WW\ni U_{j+1}\subset U$. Therefore, 
\[
\O \cong \varprojlim_{j\geq 0}N/U_j \cong \varprojlim_{U\in\UU}N/U \cong \overline{N},
\]
and similarly 
\[
\Delta \cong \varprojlim_{j\geq 0}\Z/U_j \cong \varprojlim_{V\in\VV}\Z/V \cong \overline{\Z}.
\]
In particular, $\Delta$ is a profinite group. In fact, every profinite group coming from a completion of $\Z$ occurs this way (see \lemref{supernatural} below).
The following should serve as motivation for our definition of $\UU$.
\begin{lem}\label{open-subgroups}
Every open subgroup of $\O$ is of the form
\[
\overline{\bigcup_{U\in\CC}U}
\]
for some increasing chain $\CC$ in $\UU$.

In particular, every compact open subgroup of $\O$ is of the form $\overline{U}$ for some $U\in\UU$.
\end{lem}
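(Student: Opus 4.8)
The plan is to reduce the statement to a purely arithmetic question about subgroups of $N$, exploiting that $N$ is dense in $\O$. First I would use that the $\OO_j$ $(j\in\Z)$ form a neighbourhood base at the identity consisting of compact open subgroups, so that any open subgroup $G$ contains some $\OO_j$; since $\OO_0\subseteq\OO_{j}$ for $j\le 0$, I may replace $j$ by $\max(j,0)$ and assume $j\ge 0$. Because $N$ is dense and $G$ is open, a standard argument (any net in $N$ converging to $g\in G$ eventually lies in $G$, as $G$ is a neighbourhood of $g$) gives $G=\overline{N\cap G}$. Writing $M:=N\cap G$, this is a countable subgroup of $N$, and since $\overline{U_j}=\OO_j$ and $\O/\OO_j\cong N/U_j$ (whence $N\cap\OO_j=U_j$ by comparing kernels of $N\to\O\to\O/\OO_j$), I have $U_j\subseteq M$. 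Everything then reduces to writing $M$ as an increasing union of members of $\UU$.

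Next, since $M$ is countable I would enumerate it as $q_1,q_2,\dotsc$ and set
\[
C_n=\langle q_1,\dotsc,q_n\rangle+U_j .
\]
Each $C_n$ is a finitely generated subgroup of $\Q$, hence cyclic; the $C_n$ form an increasing chain; and $\bigcup_n C_n=M+U_j=M$, using $U_j\subseteq M$. The crucial point is that every $C_n$ in fact lies in $\UU$. Writing $C_n=\tfrac{m}{n}\Z$ in lowest terms: because $C_n\subseteq N$, its generator $\tfrac mn$ lies in $N$, which forces $n\mid a_{-1}\dotsm a_{-k}$ for some $k$ (so the ``denominator'' condition is automatic for any cyclic subgroup of $N$); and because $C_n\supseteq U_j=a_0\dotsm a_{j-1}\Z$, coprimality of $m$ and $n$ forces $m\mid a_0\dotsm a_{j-1}$ (the ``numerator'' condition). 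Hence $C_n\in\UU$, and $G=\overline M=\overline{\bigcup_{U\in\CC}U}$ for the increasing chain $\CC=\{C_n\}$.

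For the final assertion, suppose $G$ is moreover compact. Since $\O=\bigcup_k\OO_{-k}$ is an increasing union of open subgroups, compactness gives $G\subseteq\OO_{-k}$ for some $k$, whence $M\subseteq N\cap\OO_{-k}=U_{-k}$. Now $M$ is sandwiched, $U_j\subseteq M\subseteq U_{-k}$, with $U_{-k}/U_j$ finite, so $M$ is finitely generated, hence cyclic; by the computation above $M\in\UU$ and $G=\overline M$, a one-term chain.

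The step I expect to be the main obstacle is verifying that each cyclic piece genuinely belongs to $\UU$, \ie\ meeting both divisibility conditions at once. The resolution is that the denominator condition comes for free from $C_n\subseteq N$, while the numerator condition is purchased by adjoining $U_j$ to each $C_n$; care is then needed to confirm that this adjunction does not enlarge the union beyond $M$, and that the lowest-terms bookkeeping is correct in the degenerate case $j=0$, where $U_0=\Z$ forces $m=1$.
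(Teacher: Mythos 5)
Your proof is correct, and it takes a genuinely different---and in fact more complete---route than the paper's. The paper's own proof is a terse structural argument about the family $\{\overline{U}:U\in\UU\}$: it observes that $\overline{U}\cup\overline{V}$ is a subgroup only when $U$ and $V$ are nested, that a union $\bigcup_{U\in\CC}\overline{U}$ is open, hence closed whenever it is a subgroup, so that it equals $\overline{\bigcup_{U\in\CC}U}$, and that $\overline{U}\cap\overline{V}=\overline{U\cap V}$; it never explicitly produces, for a given open subgroup $G$, a chain realizing $G$. Your argument supplies precisely that construction: the reduction $G=\overline{G\cap N}$ via density and openness, the observation that $U_j=N\cap\OO_j\subseteq M:=G\cap N$, and---crucially---the verification that each $C_n=\langle q_1,\dotsc,q_n\rangle+U_j$ is cyclic and lies in $\UU$, where the denominator condition is automatic for cyclic subgroups of $N$ and the numerator condition is forced by $U_j\subseteq C_n$ together with coprimality. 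This last computation is exactly the arithmetic core that the paper leaves to the reader. Your compactness argument for the second assertion ($G\subseteq\OO_{-k}$, so $M$ is sandwiched between $U_j$ and $N\cap\OO_{-k}=\tfrac{1}{a_{-1}\dotsm a_{-k}}\Z$ with finite quotient, hence finitely generated, hence cyclic, hence in $\UU$ by the same divisibility check) is likewise complete, whereas the paper gives no separate argument for the ``in particular'' clause. What the paper's approach buys, and yours does not address (nor needs to), is the complementary necessity statement: a union of closures $\overline{U}$ can only be a subgroup when the $U$'s form a chain. Two cosmetic points: your index $n$ in $C_n$ clashes with the denominator $n$ in $C_n=\tfrac{m}{n}\Z$, and you should flag that $U_{-k}$ is your own shorthand for $N\cap\OO_{-k}=\tfrac{1}{a_{-1}\dotsm a_{-k}}\Z\in\UU$, since the paper defines $U_j$ only for $j\geq 0$.
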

\begin{proof}
Let $U,V\in\UU$ and suppose there are $x\in\overline{U}\setminus\overline{V}$ and $y\in\overline{V}\setminus\overline{U}$. Then $x+y\notin\overline{U}\cup\overline{V}$, so $\overline{U}\cup\overline{V}=\overline{U\cup V}$ is a subgroup of $\O$ if and only if $U\subset V$ or $V\subset U$. Moreover, $\bigcup_{U\in\CC}\overline{U}$ is open in $\O$, hence also closed whenever it is a subgroup \cite[5.5]{hr}, so it equals $\overline{\bigcup_{U\in\CC}U}$.

Note that $\UU$ is closed under intersections and $\overline{U}\cap\overline{V}=\overline{U\cap V}$ by construction. Indeed, since $\overline{U}\cap\overline{V}$ is open in $\O$ and $N$ is dense in $\O$, $(\overline{U}\cap\overline{V})\cap N=U\cap V$ is dense in $\overline{U}\cap\overline{V}$.
\end{proof}
\begin{notn}
Whenever any confusion is possible, we write $\O_a$, $\Delta_a$, $N_a$, etc.\ for the structures associated with the sequence $a$. If $a$ and $b$ are two sequences such that $\mathcal{U}_a=\mathcal{U}_b$, we write $a\sim b$. In this case also $N_a=N_b$. It is not hard to verify that $a\sim b$ if and only if there is an isomorphism $\O_a\to\O_b$ restricting to an isomorphism $\Delta_a\to\Delta_b$. The groups $\O_a$ and $\O_b$ can nevertheless be isomorphic even if $a\not\sim b$ (see \exref{three-at-negative} and \corref{omega-iso} below).
\end{notn}
\begin{ex}\label{single-prime}
Let $p$ be a prime and assume $a=(\dotsc,p,p,p,\dotsc)$. Then $\O\cong\Q_p$ and $\Delta\cong\Z_p$, i.e., the usual $p$-adic numbers and $p$-adic integers.
\end{ex}
\begin{ex}\label{all-primes}
Let $a=(\dotsc,4,3,2,3,4,\dotsc)$, i.e., $a_i=a_{-i}=i+2$ for $i\geq 0$. Then $\O\cong\mathcal{A}_f$ and $\Delta\cong\what{\Z}$, because every prime occurs infinitely often among both the positive and the negative tail of the sequence $a$.
\end{ex}
\begin{ex}\label{three-at-zero}
Let $a_i=2$ for $i\neq 0$ and $a_0=3$, so that
\[
N=\Z[\tfrac{1}{2}]\midtext{ and }\UU=\{2^i\Z,2^i3\Z:i\in\Z\}.
\]
Then $\O$ contains torsion elements. Indeed, let
\[
x=(\dotsc,0,1,1,0,1,0,1,\dotsc),
\midtext{so that} 2x=(\dotsc,0,2,0,1,0,1,0,\dotsc),
\]
where the first nonzero entry is $x_0$. Then $3x=0$ and $\{0,x,2x\}$ forms a subgroup of $\O$ isomorphic with $\Z/3\Z$. Hence, $\O\not\cong\Q_2$ since $\Q_2$ is torsion-free.
\end{ex}
\begin{ex}\label{three-at-negative}
Assume $a$ is as in the previous example and let $b$ be given by $b_i=a_{i+1}$, that is, $b_i=2$ for $i\neq -1$ and $b_{-1}=3$. Then
\[
N_b=\tfrac{1}{3}\Z[\tfrac{1}{2}]\midtext{ and }\UU_b=\{2^i\Z,2^i\tfrac{1}{3}\Z:i\in\Z\}.
\]
We have $\O_a\cong\O_b$ (see \lemref{supernatural} and \propref{omega-isomorphism}), but $a\not\sim b$ since $\Delta_a\not\cong\Delta_b$.
\end{ex}
\begin{rem}
One should think of the entries from the negative tail of $a$ as determining a subgroup $N$ of $\Q$, and the positive tail as determining a topology that gives rise to a completion of $N$. The position of $a_0$ does not have any impact on the additive structure (up to isomorphism), but matters with respect to the multiplicative structure.

Moreover, instead of thinking about the $a$-adic numbers as coming from a sequence of numbers, one may consider them as coming from two chains of ideals. Indeed, every downward directed chain of ideals of $\Z$ (ordered by inclusion) has the form
\[
\Z\supset a_0\Z\supset a_0a_1\Z\supset a_0a_1a_2\Z\supset\dotsm
\]
for some sequence $(a_0,a_1,a_2,\dotsc)$ of integers $a_i\geq 2$. In this way, it may be possible to generalize the concept of $a$-adic numbers to other rings, for example to rings of integers of global fields.
\end{rem}

\section{\texorpdfstring{The $a$-adic algebras}{The a-adic algebras}}

We now want to define a multiplicative action on $\O$, of some suitable subset of $N$, that is compatible with the natural multiplicative action of $\Z$ on $\O$. Let $S$ consist of all $s\in\Q^{\times}_{+}$ such that the map $\UU\to\UU$ given by $U\mapsto sU$ is well-defined and bijective.

Clearly, the map $U\mapsto sU$ is injective if it is well-defined and it is surjective if the map $U\mapsto s\inv U$ is well-defined. In other words, $S$ consists of all $s\in\Q^{\times}_{+}$ such that the maps $\UU\to\UU$ given by $U\mapsto sU$ and $U\mapsto s\inv U$ are both well-defined.
\begin{lem}
If $s_1,s_2\in\N^{\times}$, $s=s_1s_2$, and the map $\UU\to\UU$ given by $U\mapsto sU$ is well-defined and bijective, then the maps $U\mapsto s_1U$ and $U\mapsto s_2U$ are also well-defined and bijective.
\end{lem}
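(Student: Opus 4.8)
The plan is to reduce everything to a prime-by-prime computation of $p$-adic valuations, which cleanly decouples the ``forward'' and ``backward'' constraints defining $\UU$. For a prime $p$ write $v_p$ for the $p$-adic valuation and set
\[
F_p=\sum_{i\ge 0}v_p(a_i)\in\N\cup\{\infty\},\qquad B_p=\sum_{k\ge 1}v_p(a_{-k})\in\N\cup\{\infty\}.
\]
Every element of $\UU$ is the cyclic subgroup generated by a unique $r\in\Q^\times_+$, and $\tfrac mn\Z$ (in lowest terms) lies in $\UU$ exactly when $m$ is forward-admissible and $n$ backward-admissible. Translating these two divisibility conditions into valuations, the first step is to establish the description
\[
\UU=\bigl\{\,r\Z:r\in\Q^\times_+,\ -B_p\le v_p(r)\le F_p\ \text{for all primes }p\,\bigr\}.
\]

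Next, for a positive integer $c$ I would analyze the shift $v_p(cr)=v_p(c)+v_p(r)$, using $v_p(c)\ge 0$. Multiplication by $c$ is well-defined on $\UU$ iff adding the nonnegative integer $v_p(c)$ maps the interval $[-B_p,F_p]$ into itself for every $p$; the lower endpoint never causes trouble since $v_p(c)\ge 0$, while testing the top endpoint via the generator $r=p^{F_p}$ (which lies in $\UU$ whenever $F_p<\infty$) forces $v_p(c)=0$ unless $F_p=\infty$. Thus $U\mapsto cU$ is well-defined iff $F_p=\infty$ for every prime $p\mid c$. Running the identical argument for $U\mapsto c^{-1}U$, where now the bottom endpoint $r=p^{-B_p}$ is the binding one, shows it is well-defined---equivalently, by the remark preceding the lemma, that $U\mapsto cU$ is bijective---iff $B_p=\infty$ for every $p\mid c$. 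Hence, for a positive integer $c$,
\[
U\mapsto cU\ \text{is well-defined and bijective}\iff F_p=B_p=\infty\ \text{for every prime }p\mid c.
\]
The lemma is then immediate: applying this criterion to $c=s$ yields $F_p=B_p=\infty$ for every $p\mid s$, and since $s_1\mid s$ and $s_2\mid s$, every prime dividing $s_1$ or $s_2$ divides $s$ as well; applying the criterion in the reverse direction to $c=s_1$ and $c=s_2$ concludes.

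I expect the only real content to lie in the valuation description of $\UU$ in the first step, since that is where forward/backward admissibility and the lowest-terms coprimality of $m,n$ must be reconciled. Working with the generator $r$ rather than the pair $(m,n)$ is what dissolves the rest: the witnesses $p^{F_p}\Z$ and $p^{-B_p}\Z$ are prime powers, so no gcd bookkeeping intervenes, and the ``well-defined and bijective'' condition collapses to an elementary statement about translating an integer interval by a nonnegative amount. The decoupling over primes is precisely what makes the factorization $s=s_1s_2$ transparent, and the nonnegativity $v_p(s_i)\ge 0$ coming from $s_i\in\N$ is exactly what rules out any cancellation that could let $s_1s_2$ satisfy the criterion while a factor fails it.
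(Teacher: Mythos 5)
Your proof is correct, but it takes a genuinely different route from the paper's. The paper never characterizes \emph{which} integers act well-definedly on $\UU$: it first reduces well-definedness of $U\mapsto tU$ (resp.\ $U\mapsto t^{-1}U$) to its restriction to the integer subgroups $m\Z$ (resp.\ the subgroups $\tfrac{1}{n}\Z$) via a lowest-terms argument, and then the lemma itself is bare divisibility transitivity: if $m\Z\in\UU$ then $sm\Z\in\UU$, so $s_im\mid sm\mid a_0\dotsm a_j$ gives $s_im\Z\in\UU$, and symmetrically $s_in\mid sn\mid a_{-1}\dotsm a_{-k}$ gives $\tfrac{1}{s_in}\Z\in\UU$. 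You instead prove the sharper statement that for $c\in\N$ the map $U\mapsto cU$ is well-defined and bijective if and only if $F_p=B_p=\infty$ for every prime $p\mid c$, after which the lemma drops out because every prime factor of $s_1$ or $s_2$ divides $s$. That characterization is strictly more than the lemma: it is (the integer case of) the Corollary that follows the lemma in the paper, since $F_p=B_p=\infty$ says precisely that $p\in P$; so your route delivers the corollary $S=\langle P\rangle$ essentially for free, at the cost of the test-element arguments with $p^{F_p}\Z$ and $p^{-B_p}\Z$ for the ``only if'' directions, which the paper's direct propagation argument never needs. Both proofs rest on the same decoupling of the numerator (forward) and denominator (backward) constraints --- the paper achieves it by restricting to $m\Z$ and $\tfrac{1}{n}\Z$, you achieve it prime by prime. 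The one step you assert rather than prove, the valuation description of $\UU$, is indeed routine: admissibility of any representation of $r\Z$ passes to the lowest-terms one by divisibility, and ``$m\mid a_0\dotsm a_j$ for some $j$'' unwinds to ``$v_p(m)\le F_p$ for all $p$'' because $m$ has finitely many prime factors and a bounded nondecreasing sequence of integers attains its supremum.
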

\begin{proof}
First, we pick $t\in\N^{\times}$. Then $t\frac{m}{n}\Z\in\UU$ whenever $\frac{m}{n}\Z\in\UU$ if and only if $tm'\Z\in\UU$ whenever $m'\Z\in\UU$. One direction is obvious, so assume $tm'\Z\in\UU$ for all $m'\Z\in\UU$ and pick $\frac{m}{n}\Z\in\UU$ (with $m$ and $n$ coprime). Since $m\Z\in\UU$ as well, $tm\Z\in\UU$, so $tm$ divides $a_0\dotsm a_j$ for some $j\geq 0$, and thus the numerator of $\frac{tm}{n}$ (after simplifying) also divides $a_0\dotsm a_j$. And since $n$ divides $a_{-1}\dotsm a_{-k}$ for some $k\geq 1$, the denominator of $\frac{tm}{n}$ (after simplifying) also divides $a_{-1}\dotsm a_{-k}$.

Similarly, $t\inv\frac{m}{n}\Z\in\UU$ whenever $\frac{m}{n}\Z\in\UU$ if and only if $\frac{1}{tn'}\Z\in\UU$ whenever $\frac{1}{n'}\Z\in\UU$.

Therefore, to show that the map $U\mapsto s_iU$ is well-defined and bijective for $i=1,2$, it is enough to show that $s_im\Z,\frac{1}{s_in}\Z\in\UU$ whenever $m\Z,\frac{1}{n}\Z\in\UU$. But this follows immediately since $s_im$ divides $sm$, which again divides $a_0\dotsm a_j$ for some $j\geq 0$, and $s_in$ divides $sn$, which divides $a_{-1}\dotsm a_{-k}$ for some $k\geq 1$.
\end{proof}
\begin{cor}
The set $S$ is a subgroup of $\Q^{\times}_{+}$ and may also be described in the following way.
Define a set of prime numbers by 
\[
P=\{p\text{ prime}:p\text{ divides }a_k\text{ for infinitely many }k<0\text{ and infinitely many }k\geq 0\}.
\]
Then $S$ coincides with the subgroup $\langle P\rangle$ of $\Q^\times_{+}$ generated by $P$.
\end{cor}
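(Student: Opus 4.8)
The plan is to reduce the statement to a prime-by-prime computation via an explicit description of $\UU$. That $S$ is a subgroup of $\Q^{\times}_{+}$ is immediate: $1\in S$ because $U\mapsto U$ is the identity; if $s,t\in S$ then $U\mapsto stU$ is the composite of the two bijections $U\mapsto tU$ and $U\mapsto sU$ of $\UU$, hence again a well-defined bijection, so $st\in S$; and the symmetric characterization recalled above (that $s\in S$ iff both $U\mapsto sU$ and $U\mapsto s^{-1}U$ are well-defined) shows $s\in S$ iff $s^{-1}\in S$. It therefore remains to determine exactly which $s$ lie in $S$.

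First I would record a valuation description of $\UU$. Writing $v_p$ for the $p$-adic valuation, set $e_p^{+}=\sum_{i\geq 0}v_p(a_i)$ and $e_p^{-}=\sum_{i\geq 1}v_p(a_{-i})$, each valued in $\{0,1,2,\dotsc\}\cup\{\infty\}$. Since $v_p(a_0\dotsm a_j)=\sum_{i=0}^{j}v_p(a_i)$, a natural number $m$ divides some $a_0\dotsm a_j$ iff $v_p(m)\leq e_p^{+}$ for all $p$, and $n$ divides some $a_{-1}\dotsm a_{-k}$ iff $v_p(n)\leq e_p^{-}$ for all $p$. Identifying each $U\in\UU$ with its positive generator $g\in\Q^{\times}_{+}$ (so $U=g\Z$), this says that $\UU$ is precisely the set of $g\Z$ with $-e_p^{-}\leq v_p(g)\leq e_p^{+}$ for every prime $p$. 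Moreover $e_p^{+}=\infty$ exactly when $p$ divides $a_k$ for infinitely many $k\geq 0$, and $e_p^{-}=\infty$ exactly when $p$ divides $a_k$ for infinitely many $k<0$, so that $P=\{p:e_p^{+}=e_p^{-}=\infty\}$.

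Next I would characterize $S$ in terms of the exponents $c_p=v_p(s)$ of $s=\prod_p p^{c_p}$. Because $sU=(sg)\Z$ and $v_p(sg)=v_p(g)+c_p$, the map $U\mapsto sU$ is well-defined iff, for every $p$, translation by $c_p$ carries the integer interval $[-e_p^{-},e_p^{+}]$ into itself. Prime by prime this forces $e_p^{+}=\infty$ when $c_p>0$ (otherwise the generator with $v_p(g)=e_p^{+}$, namely $p^{e_p^{+}}\Z\in\UU$, is sent outside $\UU$) and $e_p^{-}=\infty$ when $c_p<0$, while conversely these conditions clearly suffice. Applying this to $s$ and, through the symmetric characterization, to $s^{-1}$ (whose exponents are $-c_p$), one concludes that $c_p\neq 0$ forces both $e_p^{+}=\infty$ and $e_p^{-}=\infty$, i.e.\ $p\in P$. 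Conversely each prime $p\in P$ lies in $S$, since $e_p^{+}=e_p^{-}=\infty$ makes both $U\mapsto pU$ and $U\mapsto p^{-1}U$ well-defined. As $S$ is a subgroup, we obtain $s\in S$ iff $c_p\neq 0\Rightarrow p\in P$, which is exactly $S=\langle P\rangle$.

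The only genuine obstacle is the necessity direction: one must exhibit, for each offending prime, an actual member of $\UU$ whose image leaves $\UU$, and this is precisely where finiteness of $e_p^{+}$ (respectively $e_p^{-}$) is used to guarantee that the extremal subgroup $p^{e_p^{+}}\Z$ (respectively $\tfrac{1}{p^{e_p^{-}}}\Z$) really does lie in $\UU$. The rest is routine bookkeeping; if preferred, the preceding lemma may alternatively be invoked to pass from an integer in $S$ to its prime factors.
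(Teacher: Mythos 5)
Your proof is correct, but it takes a genuinely different route from the paper's. The paper deduces the corollary from the preceding factorization lemma: if a natural number $s$ lies in $S$, the lemma shows every prime factor of $s$ lies in $S$, and the subgroup property then lets the authors view $S$ as generated by the primes it contains; the remaining steps --- that the primes lying in $S$ are exactly those in $P$, and that non-integer elements of $S$ reduce to the integer case --- are left implicit. You instead bypass the lemma entirely: you record the valuation description $\UU=\{g\Z : -e_p^{-}\le v_p(g)\le e_p^{+}\text{ for all primes }p\}$ and characterize membership in $S$ prime by prime as the condition that translation by $v_p(s)$ (and by $-v_p(s)$) preserve the interval $[-e_p^{-},e_p^{+}]$, with the extremal subgroups $p^{e_p^{+}}\Z$ and $p^{-e_p^{-}}\Z$ serving as explicit witnesses in the necessity direction. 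What your approach buys is self-containedness and completeness: it proves exactly the two points the paper leaves to the reader, and it handles numerators and denominators symmetrically so that rational elements of $S$ cause no extra work; the valuation picture of $\UU$ is also of independent use (it re-derives the content of the paper's lemma as a byproduct). What the paper's approach buys is brevity on the page, since the lemma has already done the work of reducing from integers to primes; your closing remark that the lemma could alternatively be invoked for that reduction correctly identifies the point of contact between the two arguments.
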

\begin{proof}
The first statement is obvious from the previous argumentation, since if $s$ belongs to $S$, then $s^i$ belongs to $S$ for all $i\in\Z$.

If $s\in S\cap\N^{\times}$, it follows from the previous lemma that all its prime factors must be in $S$. Hence, $p^i$ belongs to $S$ for all $i\in\Z$ whenever $p$ is a prime factor of some $s\in S\cap\N^{\times}$.
\end{proof}
Therefore, $S$ consists of elements $s\in N$ such that the multiplicative actions of both $s$ and $s\inv$ on $N$ are well-defined and continuous with respect to $\UU$. It is well-defined since all $q\in N$ belongs to some $U\in\UU$. If $q+U$ is a basic open set in $N$, then its inverse image under multiplication by $s$, $s\inv (q+U)=s\inv q+s\inv U$, is also open in $N$ as $s\inv U\in\UU$. By letting $S$ be discrete, it follows that the action is continuous.

We will not always be interested in the action of the whole group $S$ on $N$, but rather a subgroup of $S$. So henceforth, let $H$ denote any subgroup of $S$. Furthermore, let $G$ be the semidirect product of $N$ by $H$, i.e., $G=N\rtimes H$, where $H$ acts on $N$ by multiplication. This means that there is a well-defined $ax+b$-action of $G$ on $N$ given by
\[
(r,h)\cdot q=r+hq\quad\text{ for $q,r\in N$ and $h\in H$.}
\]
The automorphisms of $N$ given by this action are continuous with respect to $\UU$, and therefore the action can be extended to $\O$, by uniform continuity.
\begin{ex}
To see why the primes in $P$ must divide infinitely many terms of both the positive and negative tail of the sequence, consider the following example. Let $a_i=2$ for $i<1$ and $a_i=3$ for $i\geq 1$ and let $x$ be defined by $x_i=0$ for $i<0$ and $x_i=1$ for $i\geq 0$. Then
\[
\tfrac{1}{2}\cdot (2\cdot x)=\tfrac{1}{2}\cdot 0=0\neq x=1\cdot x=(\tfrac{1}{2}\cdot 2)\cdot x.
\]
Here, $N=\Z[\tfrac{1}{2}]$ is a ring, but the problem is that multiplication by $\tfrac{1}{2}$ is not continuous on $N$ with respect to $\UU$. In particular, $x^{(j+1)}=3x^{(j)}$, so $\frac{1}{2}(x^{(j+1)}-x^{(j)})=x^{(j)}\not\to 0$, hence $(\frac{1}{2}x^{(j)})$ is not Cauchy. Note that $P=\varnothing$ in this case.
\end{ex}
\begin{prop}\label{action-properties}
Assume $P\neq\varnothing$ and let $H$ be a nontrivial subgroup of $S$.
Then the action of $G=N\rtimes H$ on $\O$ is minimal, locally contractive, and topologically free.
\end{prop}
\begin{proof}
For the minimality, just observe that for any $x\in\O$ the orbit $\{g\cdot x:g\in G\}$ is dense because it contains $x+N$, which is dense in $x+\O=\O$.

Next, note that for all $U\in\UU$ there is an $s\in H$ such that $sU\subsetneq U$. In fact, since $sU\in\UU$ for all $s\in H$, any $s\in H$ with $s>1$ will do the job. Therefore, for all open $q+\overline{U}\subset\O$, $q\in N$, pick $s\in H$ such that $s\cdot\overline{U}\subsetneq\overline{U}$ and put $r=q-sq\in N$. Then
\[
(r,s)\cdot (q+\overline{U})=s\cdot(q+\overline{U})+r=sq+s\cdot\overline{U}+r=q+s\cdot\overline{U}\subsetneq q+\overline{U},
\]
which means that the action is locally contractible.

Finally, suppose that $\{x\in\O:g\cdot x =x\}$ has nonempty interior for some $g=(r,s)$. Since $N$ is dense in $\O$, this implies that there is some open set $U$ in $N$ such that $g\cdot x=x$ for all $x\in U$. Assume that two distinct elements $x,y\in U$ both are fixed by $g=(r,s)$, that is, $g\cdot x=sx+r=x$ and $g\cdot y=sy+r=y$. Then $x-y=s(x-y)$, so $s=1$ since $x\neq y$, and hence $r=0$, which means that $g=(r,s)=(0,1)=e$.
\end{proof}
\begin{defn}
Suppose $P\neq\varnothing$, that is, $S\neq\{1\}$.
If $H$ is a nontrivial subgroup of $S$, we define the $C^*$-algebra $\BQ=\BQ(a,H)$ by
\[
\BQ = C_0(\O)\rtimes_{\axb} G,
\]
where
\[
\axb_{(n,h)}(f)(x)=f(h\inv \cdot (x-n)).
\]
\end{defn}
\begin{notn}
The bar-notation on $\BQ$ is used so that it agrees with the notation for stabilized Cuntz-Li algebras in \cite{cuntzq} and \cite{LarsenLi}.
\end{notn}
\begin{thm}
The $C^*$-algebra $\BQ$ is simple and purely infinite.
\end{thm}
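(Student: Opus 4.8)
The plan is to derive both conclusions at once from the general theory of crossed products by discrete group actions, using as input exactly the three dynamical properties already recorded in \propref{action-properties}. Beyond that proposition I need only two structural observations about the system $(C_0(\Omega),G,\axb)$. First, $\Omega$ is locally compact, Hausdorff, and second countable: it is a topological group whose identity neighborhood basis $\{\OO_j:j\in\Z\}$ is countable, so it is metrizable, and it is $\sigma$-compact since each $\OO_j$ is compact and $\Omega=\bigcup_{j}\OO_j$. Second, $G=N\rtimes H$ is a countable discrete amenable group: both $N\le(\Q,+)$ and $H\le\Q^{\times}_{+}$ are abelian, so $G$ is metabelian, hence solvable, hence amenable. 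Amenability is the key reduction, because it forces the full crossed product $C_0(\Omega)\rtimes_{\axb}G$ defining $\bar{\QQ}$ to coincide with the reduced crossed product, which is the object to which the dynamical theorems below directly apply.

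With these in hand I would treat simplicity first. The relevant result (Archbold--Spielberg, and in the form I want also recorded by Anantharaman-Delaroche) states that for a topologically free minimal action of a discrete group on a locally compact Hausdorff space the reduced crossed product is simple. Topological freeness guarantees that every ideal of the reduced crossed product is induced from a $G$-invariant ideal of $C_0(\Omega)$, and minimality says there are no nontrivial $G$-invariant ideals in $C_0(\Omega)$; together they leave no nontrivial ideals. Both hypotheses are furnished verbatim by the minimality and topological-freeness assertions of \propref{action-properties}.

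For pure infiniteness I would invoke the local-contractivity criterion of Anantharaman-Delaroche (the boundary-action results of Laca and Spielberg give an equivalent route): for a locally contractive action whose reduced crossed product is already known to be simple, every nonzero hereditary subalgebra contains an infinite projection, so the algebra is purely infinite. The contracting data required by that criterion is produced explicitly in the proof of \propref{action-properties}, where for each basic open set $q+\overline{U}$ an element $(s,r)\in G$ is exhibited with $(s,r)\cdot(q+\overline{U})\subsetneq q+\overline{U}$; this is exactly a contraction of $\overline{U}$ (translated to $q$) into itself, which is what the criterion consumes.

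The only place demanding genuine care, and hence the main obstacle, is the interface between the cited theorems and the present setting: those theorems are stated for reduced crossed products and under standing point-set hypotheses on the space, so I must verify that $\Omega$ satisfies them (local compactness, Hausdorffness, and second countability, as above) and that the amenability reduction identifying the full and reduced crossed products is genuinely in force before the simplicity and pure-infiniteness citations are applied. Once that bookkeeping is done, both conclusions follow immediately from \propref{action-properties}, with no further computation.
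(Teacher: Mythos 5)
Your proposal is correct and follows essentially the same route as the paper: the paper likewise deduces simplicity from minimality plus topological freeness via Archbold--Spielberg, and pure infiniteness from local contractivity via Laca--Spielberg, all resting on Proposition~\ref{action-properties}. Your additional bookkeeping --- checking that $\Omega$ is second countable, that $G$ is amenable, and that the full crossed product defining $\bar{\QQ}$ therefore coincides with the reduced one to which the cited theorems apply --- is left implicit in the paper (it surfaces only in the subsequent corollary on nuclearity), but it is a verification of hypotheses within the same argument, not a different proof.
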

\begin{proof}
This is a direct consequence of \propref{action-properties}. Indeed, $\BQ$ is simple since the action $\axb$ is minimal and topologically free \cite{as}, and $\BQ$ is purely infinite since $\axb$ is locally contractive \cite{LSboundary}.
\end{proof}
\begin{cor}
The $C^*$-algebra $\BQ$ is a nonunital Kirchberg algebra in the UCT class.
\end{cor}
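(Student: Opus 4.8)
The plan is to verify the defining properties of a nonunital Kirchberg algebra in the UCT class: separability, nuclearity, simplicity, pure infiniteness, nonunitality, and membership in the bootstrap class. Simplicity and pure infiniteness are exactly the content of the preceding theorem, so three items remain, namely separability together with nuclearity, nonunitality, and the UCT.

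For separability and nuclearity, recall that $\bar{\QQ}=C_0(\Omega)\rtimes_{\axb}G$ with $G=N\rtimes H$. The group $\Omega$ is second countable (the subgroups $\{\OO_j\}$ furnish a countable neighborhood base at $0$, and $\Omega$ is $\sigma$-compact), so $C_0(\Omega)$ is separable; since $N$ and $H$ are countable, $G$ is a countable discrete group and hence the crossed product is separable. Moreover $G$ is metabelian, being an extension of the abelian group $H$ by the abelian group $N$, so $G$ is amenable; as the crossed product of the commutative (hence nuclear) algebra $C_0(\Omega)$ by an amenable group, $\bar{\QQ}$ is nuclear, and the full and reduced crossed products coincide.

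For the UCT, I would exploit the semidirect-product structure to realize $\bar{\QQ}$ as an iterated crossed product by copies of $\Z$. Writing $\bar{\QQ}=(C_0(\Omega)\rtimes N)\rtimes H$, one expresses $N$ and $H$ as increasing unions of finitely generated subgroups: $H$ is free abelian (a subgroup of $\Q^\times_+$), so $H=\dirlim H_k$ with $H_k\cong\Z^{n_k}$, while $N\subset\Q$ has rank one and is a union of infinite cyclic groups. Since the crossed product by an increasing union of groups is the inductive limit of the crossed products by the subgroups, $\bar{\QQ}=\dirlim_k\bigl(C_0(\Omega)\rtimes G_k\bigr)$ with $G_k=N_k\rtimes H_k$; here $C_0(\Omega)\rtimes G_k=(C_0(\Omega)\rtimes N_k)\rtimes H_k$ is obtained from $C_0(\Omega)$ by $n_k$ crossed products by $\Z$ (for the factor $H_k\cong\Z^{n_k}$) together with an inductive limit of crossed products by $\Z$ (for the rank-one group $N_k$). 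Since the bootstrap class contains all commutative $C^*$-algebras and is closed both under crossed products by $\Z$ (via the Pimsner--Voiculescu sequence) and under countable inductive limits, it follows that $\bar{\QQ}$ lies in the bootstrap class and therefore satisfies the UCT. Alternatively, one may invoke that the amenable group $G$ satisfies the Baum--Connes conjecture with coefficients, so that crossed products by $G$ preserve the UCT class.

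Finally, $\Omega$ is not compact, being the strictly increasing union $\bigcup_{j\le 0}\OO_j$ of the compact open subgroups $\OO_j$, so $C_0(\Omega)$ is nonunital; and a crossed product of a nonunital algebra by a discrete group is nonunital, since if $E\colon\bar{\QQ}\to C_0(\Omega)$ is the canonical conditional expectation and $p$ were a unit for $\bar{\QQ}$, then $a\,E(p)\,b=E(apb)=ab$ for all $a,b\in C_0(\Omega)$ would make $E(p)$ a unit for $C_0(\Omega)$. Assembling these facts, $\bar{\QQ}$ is a separable, nuclear, simple, purely infinite $C^*$-algebra, hence a Kirchberg algebra, it is nonunital, and it lies in the UCT class. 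The one genuinely nonroutine point is the UCT: because $G$ is \emph{not} finitely generated one cannot quote a single off-the-shelf statement for poly-$\Z$ groups, and must instead organize the inductive limit carefully as above (or appeal to the Baum--Connes machinery for amenable groups); the remaining properties are direct.
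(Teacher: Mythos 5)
Your checklist matches what the corollary requires, and your treatment of separability, nuclearity (via amenability of the solvable group $G$), nonunitality, and simplicity/pure infiniteness (quoting the preceding theorem) is sound; your conditional-expectation argument for nonunitality is in fact more careful than the paper, which simply asserts this point. Where you genuinely diverge is the UCT. The paper identifies $\bar{\QQ}$ with the $C^*$-algebra of the transformation groupoid attached to $(\Omega,G)$, observes that this groupoid is amenable, and invokes Tu's theorem that $C^*$-algebras of amenable groupoids lie in the UCT class; your ``alternative'' appeal to Baum--Connes-type machinery for the amenable group $G$ is in spirit that same argument. Your primary route --- staying inside the bootstrap class using its closure under crossed products by $\Z$ (Pimsner--Voiculescu) and under countable inductive limits --- is more elementary and avoids groupoid technology altogether, at the price of careful bookkeeping of approximating subgroups, and that is exactly where your write-up has a concrete error.

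The error: you set $G_k=N_k\rtimes H_k$ with $N_k$ finitely generated and $H_k\cong\Z^{n_k}$ nontrivial, but no such subgroup of $G$ exists. For $N_k\rtimes H_k$ to be a subgroup of $G=N\rtimes H$, the group $N_k$ must be invariant under multiplication by $H_k$, and a nontrivial finitely generated subgroup of $N\subset\Q$ is never invariant under multiplication by a rational $h\neq 1$: writing $h=p/q$ in lowest terms, the orbit $\{h^{-j}n\}_{j\geq 0}$ (or $\{h^{j}n\}_{j\geq 0}$) of any $n\neq 0$ has unbounded denominators, so the subgroup it generates is not finitely generated. Your own parenthetical remark that $N_k$ requires ``an inductive limit of crossed products by $\Z$'' signals this tension, since a cyclic $N_k$ would need no such limit. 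The fix is cheap: take $G_k=N\rtimes H_k$, keeping all of $N$, which is invariant under every $H_k\subset H\subset S$. Then $G=\bigcup_k G_k$ is an increasing union; $C_0(\Omega)\rtimes N$ is the inductive limit of the crossed products $C_0(\Omega)\rtimes C$ over the cyclic subgroups $C$ of $N$, hence lies in the bootstrap class; $(C_0(\Omega)\rtimes N)\rtimes H_k$ stays in the class by $n_k$ applications of closure under $\Z$-crossed products; and finally $\bar{\QQ}=\dirlim_k\bigl(C_0(\Omega)\rtimes G_k\bigr)$ does too. With that correction your argument is complete and gives a legitimate alternative to the paper's proof.
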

\begin{proof}
The algebra is clearly nonunital and separable. It is also nuclear since the dynamical system consists of an amenable group acting on a commutative $C^*$-algebra. Finally, $\BQ$ may be identified with the $C^*$-algebra of the transformation groupoid attached to $(\O,G)$. This groupoid is amenable and hence the associated $C^*$-algebra belongs to the UCT class \cite{Tu}. 
\end{proof}
\begin{rem}
We should note the difference between the setup described here and the one in \cite{KLQcuntz}. Here, we do not assume that the family of subgroups $\UU$ is generated by the action of $H$ on some subgroup $M$ of $N$. That is, we do not assume that $\UU=\UU_M=\{h\cdot M:h\in H\}$ for any $M$, only that $\UU_M\subset\UU$. In light of this, we were lead to convince ourselves that the results in \cite[Section~3]{KLQcuntz} also hold under the following slightly weaker conditions on $\UU$. Let $G=N\rtimes H$ be a discrete semidirect product group with normal subgroup $N\neq G$ and quotient group $H$. Let $\UU$ be a family of normal subgroups of $N$ satisfying the following conditions:
\begin{enumerate}
\item $\UU$ is downward directed, separating, and has finite quotients.
\item For all $U\in\UU$ and $h\in H$, $h\cdot U\in\UU$.
\item For all $V\in\UU$, the family \[\UU_V=\{h\cdot V:h\in H\}\] is downward directed and separating.
\item For all $h\in H$ and $U\in\UU$, $U$ is not fixed pointwise by the action of $h$.
\end{enumerate}
In particular, the conditions do not depend on any specific subgroup $M$ of $N$.
\end{rem}
\begin{ex}\label{prime-generated}
If $a=(\dotsc,2,2,2,\dotsc)$ and $H=S=\langle 2\rangle$, then $\BQ$ is the algebra $\BQ_2$ of Larsen and Li \cite{LarsenLi}. More generally, if $p$ is a prime, $a=(\dotsc,p,p,p,\dotsc)$ and $H=S=\langle p\rangle$, we are in the setting of \exref{single-prime} and get algebras similar to $\BQ_2$.

If $a=(\dotsc,4,3,2,3,4,\dotsc)$ and $H=S=\Q^{\times}_{+}$, then we are in the setting of \exref{all-primes}. In this case $\BQ$ is the algebra $\BQ_{\N}$ of Cuntz \cite{cuntzq}.

Both these algebras are special cases of the most well-behaved situation, namely where $H=S$ and $a_i\in H\cap\N^{\times}$ for all $i\in\Z$. The algebras arising this way are completely determined by the set (finite or infinite) of primes $P$, and are precisely the kind of algebras that fit into the framework of \cite{KLQcuntz}. The cases described above are the two extremes, where $P$ consists of either one single prime or all primes.
\end{ex}
\begin{rem}
If $a\sim b$, then $S_a=S_b$ and $\BQ(a,H)=\BQ(b,H)$ for all nontrivial $H\subset S_a=S_b$.
\end{rem}
\begin{ex}\label{singly-generated}
If $a=(\dotsc,2,2,2,\dotsc)$ and $b=(\dotsc,4,4,4,\dotsc)$, then $a\sim b$. Hence, for all nontrivial $H\subset S=\langle 2\rangle$ we have $\BQ(a,H)=\BQ(b,H)$. However, if $H=\langle 4\rangle$, then $\BQ(a,S)\not\cong\BQ(a,H)$, as remarked after \qref{k-theory}.
\end{ex}
In light of this example, it could also be interesting to investigate the $ax+b$-action on $\O$ of other subgroups $G'$ of $N\rtimes S$. It follows from the proof of \propref{action-properties} that the action of $G'$ on $\O$ is minimal, locally contractive and topologically free if and only if $G'=M\rtimes H$, where $M\subset N$ is dense in $\O$ and $H\subset S$ is nontrivial.
\begin{lem}\label{dense}
A proper subgroup $M$ of $N$ is dense in $\O$ if and only if $M=qN$ for some $q\geq 2$ such that $q$ and $a_i$ are relatively prime for all $i\in\Z$.
\end{lem}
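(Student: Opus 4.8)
The plan is to convert the topological statement into an arithmetic condition inside $N\subset\Q$ and then analyze $M$ through the finite quotients $N/U_j$. \emph{Reduction.} Since the subgroups $\{\OO_j:j\geq 0\}$ form a neighborhood base at $0$ and the closure of a subgroup equals the intersection of its translates by such a base, $M$ is dense in $\O$ if and only if $M+\OO_j=\O$ for every $j\geq 0$. Because $N$ is dense in $\O$ and $\O/\OO_j\cong N/U_j$, the composite $N\hookrightarrow\O\to\O/\OO_j$ is the quotient $N\to N/U_j$ with kernel $N\cap\OO_j=U_j$; hence $M+\OO_j=\O$ is equivalent to $M+U_j=N$. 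Thus the lemma reduces to the purely arithmetic equivalence
\[
M\text{ dense}\iff M+U_j=N\text{ for all }j\geq 0.
\]

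First I would treat the ``if'' direction. Given $M=qN$ with $q\geq 2$ coprime to every $a_i$, every denominator $a_{-1}\dotsm a_{-k}$ occurring in $N$ is coprime to $q$, and a short computation then gives $N/qN\cong\Z/q\Z$ with $\Z\to N/qN$ the reduction map. The image of $U_j=a_0\dotsm a_{j-1}\Z$ is generated by $a_0\dotsm a_{j-1}\bmod q$, which is a unit because $q$ is coprime to $a_0,\dotsc,a_{j-1}$; hence $U_j$ surjects onto $N/qN$, i.e. $qN+U_j=N$ for all $j$. Since $q\geq 2$ we have $qN\neq N$, so $M$ is a proper dense subgroup.

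For the ``only if'' direction, suppose $M\subsetneq N$ is dense, so $M+U_j=N$ for all $j\geq 0$. Each $U_j\cong\Z$ surjects onto $N/M$, so $N/M$ is cyclic. I would rule out $N/M\cong\Z$: a surjection $U_0=\Z\to\Z$ sends $1\mapsto\pm 1$, forcing the image of $U_1=a_0\Z$ to be $a_0\Z\neq\Z$ and contradicting $M+U_1=N$. Hence $N/M\cong\Z/q\Z$ for some $q\geq 2$; let $\phi\colon N\to\Z/q\Z$ be the quotient map. Surjectivity of $\phi|_{\Z}$ (from $M+U_0=N$) makes $\phi(1)$ a generator, and then the relation $(a_{-1}\dotsm a_{-k})\,\phi\bigl(1/(a_{-1}\dotsm a_{-k})\bigr)=\phi(1)$ forces $\gcd(q,a_{-i})=1$ for $i\geq 1$, while surjectivity of $\phi|_{U_j}$ forces $\gcd(q,a_0\dotsm a_{j-1})=1$, i.e. $\gcd(q,a_i)=1$ for $i\geq 0$. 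Thus $q$ is coprime to all $a_i$. Finally $qN\subseteq M$ because $\phi(qx)=0$, and by the computation above $N/qN\cong\Z/q\Z\cong N/M$; comparing orders of the finite groups in $qN\subseteq M\subseteq N$ yields $M=qN$.

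The main obstacle is the ``only if'' direction: one must see that density forces $N/M$ to be \emph{finite} cyclic (ruling out the infinite cyclic case) and then read off the two families of coprimality conditions from genuinely different facts — coprimality with the negative-index $a_{-i}$ comes from the mere existence of the homomorphism $\phi$ on $N$, whereas coprimality with the non-negative-index $a_i$ comes from surjectivity of $\phi$ on the shrinking subgroups $U_j$. A secondary point requiring care is the identification $N\cap\OO_j=U_j$ underlying the reduction step.
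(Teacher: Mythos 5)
Your proof is correct, but it takes a genuinely different route from the paper's. The paper argues topologically: using its structure result for open subgroups of $\O$ (\lemref{open-subgroups}), it characterizes failure of density by containment of $M$ in a proper subgroup of $N$ of a special arithmetic form (arising from a union of a chain in $\UU$), and then asserts that the only proper subgroups avoiding every such containment are the groups $qN$ with $q$ coprime to all $a_i$. You instead reduce density to surjectivity onto the finite quotients: $M$ is dense if and only if $M+U_j=N$ for all $j\geq 0$, via $N\cap\OO_j=U_j$ and $\O/\OO_j\cong N/U_j$, and then run a purely algebraic argument --- $N/M$ is a quotient of $U_j\cong\Z$, hence cyclic; it cannot be infinite cyclic since the images of $U_0\supset U_1$ would be $\Z\supsetneq a_0\Z$, contradicting surjectivity; so $N/M\cong\Z/q\Z$, the coprimality of $q$ with the $a_{-i}$ falls out of the mere existence of $\phi$ on $N$ while coprimality with the $a_i$, $i\geq 0$, falls out of surjectivity of $\phi|_{U_j}$, and finally $qN\subseteq M$ together with $[N:qN]=[N:M]=q$ forces $M=qN$. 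Your reduction buys a completely elementary and self-contained ``only if'' direction, which is exactly the step the paper leaves terse (``the only possibility is that $M=qN$''); the paper's approach, by contrast, reuses machinery (\lemref{open-subgroups}) that it needs elsewhere anyway (e.g.\ in \propref{omega-isomorphism}) and exhibits the obstruction to density concretely, as a proper open subgroup containing $M$. Both arguments rest on the same underlying facts about the filtration $\{U_j\}$, and both establish the lemma.
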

\begin{proof}
First note that $\overline{M}\subsetneq\overline{N}=\O$ if and only if there exists a subfamily $\UU'\subset\UU$ so that
\[
M\subset\bigcup_{U\in\UU'}U\subsetneq N.
\]
This holds if and only if $M$ is contained in a subgroup of $N$ of the form
\[
\left\{\frac{L\cdot j}{a_{-1}'\dotsm a_{-k}'}:j\in\Z, k\geq 1\right\}\subsetneq N,
\]
where $a_{-1}',a_{-2}',\dotsc$ is a sequence of natural numbers such that for all $i\geq 1$ there is $k\geq 1$ such that
$a_{-1}'a_{-2}'\dotsc a_{-i}'$ divides $a_{-1}a_{-2}\dotsc a_{-k}$, and where $L$ divides $a_0\dotsm a_j$ for some $j\geq 0$, but $L$ does not divide $a_{-1}a_{-2}\dotsc a_{-k}$ for any $k\geq 1$.

If $M\subsetneq N$ and $M$ is not contained in any subgroup of $N$ of this form, then the only possibility is that $M=qN$
for some $q\geq 2$ such that $q$ and $a_i$ are relatively prime for all $i\in\Z$.
\end{proof}
Of course, $M$ and $N$ are isomorphic if the condition of \lemref{dense} is satisfied (see \remref{q-isomorphism}).
\begin{rem}\label{automorphism}
Let $p$ be a prime and let $U,V\in\UU$. Then the map $\O\to\O$ given by $x\mapsto px$ is continuous and open. Thus, $p\overline{U}=\overline{pU}=\overline{U}$ if $pU\notin\UU$. If $p\overline{U}=\overline{V}$ and $pU\in\UU$, then $p\inv V\in\UU$.
Set
\[
Q=\{p\text{ prime}:p\text{ does not divide any }a_i\}.
\]
Then multiplication by a prime $p$ is an automorphism of $\O$ if and only if $p\in P\cup Q$. Indeed, if $p\in Q$, then $p\overline{U}=\overline{U}$ for all $U\in\UU$ (see also \remref{inverses}).
\end{rem}
\begin{prop}
Suppose $M$ is a subgroup of $N$ that is dense in $\O$, and let $H$ be a nontrivial subgroup of $S$. As usual, let $G=N\rtimes H$ and set $G'=M\rtimes H$. Then \begin{equation}\label{other-subgroup} C_0(\O)\rtimes_{\axb} G\cong C_0(\O)\rtimes_{\axb} G'.
\end{equation}
\end{prop}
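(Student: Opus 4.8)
The plan is to realize the two crossed products as coming from \emph{isomorphic} $C^*$-dynamical systems, the isomorphism being implemented by an equivariant automorphism of $\Omega$. The starting point is \lemref{dense}: if $M\neq N$, then $M=qN$ for some integer $q\geq 2$ that is relatively prime to every $a_i$ (the case $M=N$ being trivial, since then $G'=G$). So throughout I take $M=qN$ with $q$ coprime to all $a_i$, allowing $q=1$ to subsume the trivial case.

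First I would observe that multiplication by $q$ gives a \emph{topological automorphism} $m_q\colon\Omega\to\Omega$. Indeed, $m_q$ is always a continuous group endomorphism of $\Omega$, and since every prime factor of $q$ lies in the set $Q$ of \remref{automorphism} (because $q$ is coprime to all $a_i$), each such factor, and hence $q$ itself, acts as an automorphism of $\Omega$. By construction $m_q$ carries $N$ onto $qN=M$.

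Next I would package this into an isomorphism of dynamical systems. Define $\phi\colon G\to G'$ by $\phi(n,h)=(qn,h)$. Since $n\mapsto qn$ is a bijection $N\to M$ that commutes with the $H$-action, and since $q(hn)=h(qn)$, the map $\phi$ is a group isomorphism $N\rtimes H\to M\rtimes H$. The key computation is that $m_q$ is $\phi$-equivariant:
\[
m_q\bigl((n,h)\cdot x\bigr)=q\bigl(n+hx\bigr)=qn+h\,(qx)=\phi(n,h)\cdot m_q(x),
\]
where I use that multiplication by the integer $q$ commutes with the action of $h\in H$ (clear on the dense subgroup $N$, hence on all of $\Omega$ by continuity). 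Equivalently, $m_q\circ g\circ m_q^{-1}=\phi(g)$ as transformations of $\Omega$ for every $g\in G$.

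Finally, setting $\Phi(f)=f\circ m_q^{-1}$ defines a $C^*$-isomorphism of $C_0(\Omega)$, and the equivariance above yields $\Phi\circ\axb_g=\axb_{\phi(g)}\circ\Phi$ for all $g\in G$. Thus $(\Phi,\phi)$ is an isomorphism of the $C^*$-dynamical systems $(C_0(\Omega),G,\axb)$ and $(C_0(\Omega),G',\axb)$, and the induced map on crossed products gives the desired isomorphism $C_0(\Omega)\rtimes_{\axb}G\cong C_0(\Omega)\rtimes_{\axb}G'$. The only real content is the first step, namely promoting multiplication by $q$ from an automorphism of $N$ to a topological automorphism of the completion $\Omega$; this is exactly what the coprimality condition in \lemref{dense} together with \remref{automorphism} guarantees. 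Everything after that is the formal verification that an equivariant isomorphism of dynamical systems descends to an isomorphism of the associated crossed products.
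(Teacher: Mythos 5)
Your proof is correct and is essentially the paper's argument: the paper simply writes down the induced map $\varphi(f)(n,h)(x)=f(qn,h)(qx)$ on $C_c$-functions, which is exactly the crossed-product isomorphism arising from your equivariant pair $(\Phi,\phi)$ built from $m_q$ and $(n,h)\mapsto(qn,h)$. Your write-up supplies the justification the paper leaves implicit, namely that coprimality of $q$ with all $a_i$ (via \remref{automorphism}) makes multiplication by $q$ a topological automorphism of $\O$, so the formula really does define an isomorphism.
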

\begin{proof}
Assume that $M=qN$, where $q$ and $a_i$ are relatively prime for all $i$. Then an isomorphism is determined by the map $\varphi\colon C_c(G',C_0(\O))\to C_c(G,C_0(\O))$ given by
\[
\varphi(f)(n,h)(x)=f(qn,h)(qx).
\qedhere
\]
\end{proof}
\begin{rem}
We complete this discussion by considering the $ax+b$-action on $\O$ of potentially larger groups than $N\rtimes S$. By \remref{automorphism}, $\tfrac{1}{p}\in\O$ when $p\in Q$, and it is possible to embed the subgroup
\begin{equation}\label{NQ}
N_Q=\{\tfrac{n}{q}\mid n\in N,q\in\langle Q\rangle\}\subset\Q
\end{equation}
in $\O$, where $\langle Q\rangle$ denotes the multiplicative subgroup of $\Q^{\times}_{+}$ generated by $Q$. The largest subgroup of $\Q\rtimes\Q^{\times}_{+}$ that can act on $\O$ through an $ax+b$-action is $N_Q\rtimes\langle P\cup Q\rangle$.

For example, $\Q$ itself can be embedded into $\Q_p$ for all primes $p$, but it is not hard to see that \lemref{closed-range} of the next section fails in this case since the image of the diagonal map $\Q\to\R\times\Q_p$ is not closed. In fact, the only groups $N\subset M\subset N_Q$ that give rise to the duality theorem (\thmref{duality}) are of the form $M=\tfrac{1}{q}N$ for $q\in\langle Q\rangle$. Moreover, $S$ is the largest subgroup of $\langle P\cup Q\rangle$ that acts on $M$, and of course \eqref{other-subgroup} also holds for all $H\subset S$ in this case.

It is possible to adjust the way we apply \thmref{duality} so that we also can consider the action of larger subgroups. For this to work we have to ``compensate'' in the duality theorem, as we explain more precisely in \remref{compensate}.

Finally, we remark that one may also involve the roots of unity of $\Q^{\times}$ in the multiplicative action (as in \cite{CLintegral}), that is, replace $H$ with $\{\pm h:h\in H\}=\{\pm 1\}\times H$. The associated algebras will then be of the form $\BQ\rtimes\Z/2\Z$. However, we restrict to the action of the torsion-free part of $\Q^{\times}$ in this paper.
\end{rem}

\section{\texorpdfstring{A duality theorem for $a$-adic groups}{A duality theorem for a-adic groups}}\label{dual-groups}

In this section we first give the duality result (\thmref{duality}) for the group of $a$-adic numbers following the approach of \cite{hr}, after which we outline a slightly modified approach that has some advantage.

For any $a$, let $a^*$ be the sequence given by $a^*_i=a_{-i}$. In particular, $(a^*)^*=a$. We now fix $a$ and write $\O$ and $\O^*$ for the $a$-adic and $a^*$-adic numbers, respectively. Let $x\in\O$ and $y\in\O^*$ and for $j\geq 1$ put
\[
z_j=e^{2\pi ix^{(j)}y^{(j)}/a_0},
\]
where the truncated sequences $x^{(j)}$ and $y^{(j)}$ are treated as their corresponding rational numbers in $N$. Then $z_j$ will eventually be constant. Indeed, note that
\[
x^{(j)}=\frac{x_{-k}}{a_{-1}\dotsm a_{-k}}+\dotsm+\frac{x_{-1}}{a_{-1}}+x_0+a_0x_1+\dotsm+a_0\dotsm a_{j-1}x_j
\]
for some $k>0$. Therefore, $a_0a_{-1}\dotsm a_{-j+1}x^{(j)}\in a_0\Z$ for all $j>k$. Similarly, there is an $l>0$ such that $a_0a_1\dotsm a_{j-1}y^{(j)}\in a_0\Z$ for all $j>l$. Hence, for all $j>\max{\{k,l\}}$
\[
x^{(j+1)}y^{(j+1)}-x^{(j)}y^{(j)}=0 \pmod{a_0\Z},
\]
i.e., $z_{j+1}=z_j$. We now define the pairing $\O\times\O^*\to\T$ by
\[
\langle x,y\rangle_{\O}=\lim_{j\to\infty}e^{2\pi ix^{(j)}y^{(j)}/a_0}.
\]
The pairing is a continuous homomorphism in each variable separately and gives an isomorphism $\O^*\to\ohat$. Indeed, to see that our map coincides with the one in \cite[25.1]{hr}, note the following. Suppose $k$ and $l$ are the largest numbers such that $x_i=0$ for $i<-k$ and $y_j=0$ for $j<-l$. Then
\[
\langle x,y\rangle_{\O}=\begin{cases}e^{2\pi i x^{(l)}y^{(k)}/a_0}&\text{ if }k+l\geq 0,\\1&\text{ if }k+l<0.\end{cases}
\]
From this it should also be clear that if $x\in N$ and $y\in N^*$, then
\[
\langle x,y\rangle_{\O}=e^{2\pi ixy/a_0}.
\]
Furthermore, for each $j$, the pairing of $\O$ and $\O^*$ restricts to an isomorphism $\OO_{-j+1}^*\to(\OO_j)^\perp$. This is also explained in \cite[25.1]{hr}. Hence, for all $j\in\Z$
\[
\O^*/\OO_{-j+1}^*\cong\ohat/(\OO_j)^\perp\cong\what{\OO_j}.
\]
\begin{lem}\label{closed-range}
The injection $\iota\colon N\to\R\times\O$ given by $q\mapsto (q,q)$ has discrete range, and $N$ may be considered as a closed subgroup of $\R\times\O$.
\end{lem}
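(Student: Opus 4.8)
The plan is to show that $\iota(N)$ is discrete in $\R\times\O$ and then invoke the standard fact that a discrete subgroup of a locally compact abelian group is automatically closed.

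First I would exhibit a neighborhood of the identity $(0,0)$ in $\R\times\O$ that meets $\iota(N)$ only in $\{(0,0)\}$; discreteness of a subgroup follows once the identity is isolated, since translation by group elements is a homeomorphism. For the $\O$-factor I would use the compact open subgroup $\Delta=\OO_0=\bar\Z$, and for the $\R$-factor the open interval $(-1,1)$. Concretely, suppose $q\in N$ satisfies $(q,q)\in(-1,1)\times\Delta$. The condition $q\in\Delta$ means that, under the identification of $N$ with its dense image in $\O$, the element $q$ lies in $\Delta\cap N=\Z$ (this is exactly the statement that $\bar\Z\cap N=\Z$, which follows from $\Delta\cong\bar\Z$ and the density arguments of \lemref{open-subgroups}). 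Thus $q$ is an \emph{integer}. But then the first coordinate forces $q\in(-1,1)\cap\Z=\{0\}$. Hence the only element of $\iota(N)$ in this neighborhood is $(0,0)$, so $\iota(N)$ is discrete.

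The key point deserving care is the claim $\Delta\cap N=\Z$ inside $\O$: an element $q=j/(a_{-1}\dotsm a_{-k})\in N$ lies in $\OO_0$ exactly when its $a$-adic expansion has no entries in negative positions, which happens precisely when the denominator $a_{-1}\dotsm a_{-k}$ divides $j$, i.e.\ when $q\in\Z$. I would phrase this using the truncation/quotient maps $\tau_j$ from \secref{1}, noting that $q\in N$ maps into $\Z/U_j\cong\Delta/\OO_j$ for every $j$ iff $q\in\Z$, together with separatedness of $\WW$. This is the main (though modest) obstacle: everything else is formal.

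Finally, discreteness gives closedness for free: in a Hausdorff topological group a discrete subgroup is closed (a limit point of $\iota(N)$ would, after translating, produce a sequence of distinct group elements accumulating at the identity, contradicting isolation). Since $\R\times\O$ is Hausdorff and $\iota$ is clearly an injective homomorphism, $\iota(N)$ is a closed subgroup, and $N$ may be identified with it. I would cite \cite[5.10]{hr} or the analogous standard result for the implication ``discrete $\Rightarrow$ closed'' in a locally compact (hence Hausdorff) abelian group.
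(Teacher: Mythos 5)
Your proof is correct and takes essentially the same approach as the paper's: the same neighborhood $(-1,1)\times\Delta$ of the identity, the same key fact $N\cap\Delta=\Z$, and the same appeal to \cite[5.10]{hr} for the implication discrete $\Rightarrow$ closed. The only difference is that you spell out the verification of $N\cap\Delta=\Z$ via truncations, which the paper takes as already established by its identification of $\Z$ with its dense image in $\Delta$.
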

\begin{proof}
Since $(-1,1)$ is open in $\R$ and $\Delta$ is open in $\O$, we have $U=(-1,1)\times\Delta$ is open in $\R\times\O$. Moreover, $\iota(N)\cap U=\{0\}$ as $N\cap\Delta=\Z$. Therefore, $\{0\}$ is an isolated point in $\iota(N)$, hence the image of $\iota$ is discrete by \cite[5.8]{hr} which means it is closed in $\R\times\O$ by \cite[5.10]{hr}.
\end{proof}
Similarly, $N^*$ may be considered as a closed subgroup of $\R\times\O^*$. By applying the facts about the pairing of $\O$ and $\O^*$ stated above, the pairing of $\R\times\O$ and $\R\times\O^*$ given by
\[
\langle (u,x),(v,y)\rangle=e^{-2\pi iuv/a_0}\lim_{j\to\infty}e^{2\pi ix^{(j)}y^{(j)}/a_0}=\langle u,v\rangle_{\R}\langle x,y\rangle_{\O}
\]
defines an isomorphism $\varphi\colon\R\times\O^*\to\what{\R\times\O}$.
\begin{lem}
The map $\varphi$ restricts to an isomorphism $\iota(N^*)\to \iota(N)^\perp$.
\end{lem}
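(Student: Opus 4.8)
The plan is to prove the two inclusions $\varphi(\iota(N^*))\subseteq\iota(N)^\perp$ and $\iota(N)^\perp\subseteq\varphi(\iota(N^*))$ separately: the first by a one-line pairing computation, and the second by a lattice/duality argument. First I would record the easy inclusion. For $q\in N$ and $q'\in N^*$, using the explicit formula $\langle x,y\rangle_\Omega=e^{2\pi ixy/a_0}$ (valid when $x\in N$ and $y\in N^*$), one computes
\[
\langle\iota(q),\iota(q')\rangle=e^{-2\pi iqq'/a_0}\,\langle q,q'\rangle_\Omega=e^{-2\pi iqq'/a_0}\,e^{2\pi iqq'/a_0}=1.
\]
Since $\iota(N)^\perp$ consists of those $(v,y)$ with $\langle\iota(q),(v,y)\rangle=1$ for all $q\in N$, this gives $\varphi\circ\iota(N^*)\subseteq\iota(N)^\perp$; and as $\varphi$ is an isomorphism while $\iota|_{N^*}$ is a closed embedding, $\varphi\circ\iota\colon N^*\to\iota(N)^\perp$ is an injective continuous homomorphism of discrete groups.

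Next I would upgrade ``closed subgroup'' to ``lattice''. Using $\Omega=N+\Delta$ and $N\cap\Delta=\Z$ (both immediate from the digit description), every $(u,x)\in\R\times\Omega$ is congruent modulo $\iota(N)$ to a unique element of $[0,1)\times\Delta$: subtract $\iota(q)$ to push $x$ into $\Delta$, then subtract an integer to push $u$ into $[0,1)$. Hence $(\R\times\Omega)/\iota(N)$ is compact, so $\iota(N)$ is a lattice, and the symmetric argument shows $\iota(N^*)$ is a lattice in $\R\times\Omega^*$. By Pontryagin duality for the closed subgroup $\iota(N)$, the annihilator $\iota(N)^\perp$ is itself a lattice in $\widehat{\R\times\Omega}$: it is the dual of the compact group $(\R\times\Omega)/\iota(N)$, and $\widehat{\R\times\Omega}/\iota(N)^\perp\cong\what{N}$ is compact. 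Transporting through $\varphi$, we thus obtain two lattices $\iota(N^*)\subseteq\varphi^{-1}(\iota(N)^\perp)$ in $\R\times\Omega^*$, so the index $[\varphi^{-1}(\iota(N)^\perp):\iota(N^*)]$ is finite.

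The remaining step, proving this index equals $1$, is where I expect the real difficulty: both containments are a priori only up to finite index, and neither the easy computation nor abstract duality alone rules out a proper inclusion (the dual statement for $N^*$ is just the symmetric version of the lemma, so bootstrapping is circular). I would close the gap by comparing covolumes. For a lattice $\Gamma$ one has $\operatorname{covol}(\Gamma)\operatorname{covol}(\Gamma^\perp)=1$ in dual Haar measures, so it suffices to fix the normalization $\mu_\Omega(\Delta)\,\mu_{\Omega^*}(\Delta^*)=1$, which is exactly the content of the pairing restricting to the isomorphisms $\OO^*_{-j+1}\to(\OO_j)^\perp$ (in particular $\Delta^\perp=\OO^*_1$). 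Then $\iota(N^*)$ and $\varphi^{-1}(\iota(N)^\perp)$ have equal covolume, and a finite-index inclusion of lattices with equal covolume must be an equality, giving the desired isomorphism. As an alternative finish I would run a direct digit computation: testing $q=1$ in $\langle\iota(q),(v,y)\rangle=1$ forces $v\in\Z$, and testing $q=1/(a_{-1}\cdots a_{-k})$ for increasing $k$ forces all but finitely many $a^*$-adic digits of $y$ to vanish and $y=v$, so $(v,y)\in\iota(N^*)$ outright.
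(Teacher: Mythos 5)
Your main argument is correct, but it takes a genuinely different route from the paper. The paper proves the hard inclusion $\iota(N)^\perp\subseteq\iota(N^*)$ by direct computation --- essentially your ``alternative finish'': pairing $(v,y)$ against $q=1/(a_{-1}\dotsm a_{-k})$ yields congruences $y^{(j)}-v\equiv 0 \pmod{a_0a_{-1}\dotsm a_{-k}\Z}$, from which it concludes $v$ is rational and $(v,y)\in\iota(N^*)$. Your primary route instead invokes structure theory of lattices in locally compact abelian groups: $\iota(N)$ is cocompact with fundamental domain $[0,1)\times\Delta$ (using $\Omega=N+\Delta$ and $N\cap\Delta=\Z$), so $\iota(N)^\perp$ is a lattice, the inclusion $\iota(N^*)\subseteq\varphi^{-1}(\iota(N)^\perp)$ has finite index, and equal covolumes force equality. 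This imports facts the paper never needs (Poisson-summation-type covolume duality and quotient-measure bookkeeping), and your normalization step deserves one more line: with the pairing $e^{-2\pi iuv/a_0}$ on the real coordinates, the dual of $\lambda_\R\otimes\mu_\Omega$ is $\tfrac{1}{a_0}\lambda_\R\otimes\widehat{\mu_\Omega}$, and $\widehat{\mu_\Omega}(\Delta^*)=a_0/\mu_\Omega(\Delta)$ since $\Delta^\perp=\OO^*_1$ has index $a_0$ in $\Delta^*$; the two factors of $a_0$ cancel and give exactly your condition $\mu_\Omega(\Delta)\,\mu_{\Omega^*}(\Delta^*)=1$, so the argument closes. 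What your route buys is the absence of digit manipulation and quantifier care; indeed, your aside is where the only real errors sit: testing $q=1$ does \emph{not} force $v\in\Z$ (take $(v,y)=\iota(\tfrac12)$ with all $a_i=2$), and one cannot conclude $y^{(j)}=v$ for large $j$ (take $(v,y)=\iota(-1)$ with all $a_i=2$, where $y^{(j)}=2^{j+1}-1$ for every $j$) --- the correct endpoint is $v\in N^*$ and $y=\iota(v)$ in $\Omega^*$, a subtlety that the paper's own wording (``$y^{(j)}=v$ for sufficiently large $j$'') also elides. Since your covolume argument never uses that aside, your proof stands.
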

\begin{proof}
First, for $r\in N^*$ we have
\[
\langle (q,q),(r,r)\rangle=e^{-2\pi iqr/a_0}e^{2\pi iqr/a_0}=1
\quad\text{for all $q\in N$,}
\]
so $\iota(r)\in \iota(N)^\perp$.

On the other hand, let $(v,y)\in \iota(N)^\perp$, so that
\[
\langle (q,q),(v,y)\rangle=1\quad\text{for all $q\in N$.}
\]
Then
\[
\frac{1}{a_0}q(y^{(j)}-v)=0\pmod{\Z}\quad\text{for all $q\in N$ and sufficiently large $j$.}
\]
In particular, this must hold for $q=\frac{1}{a_{-1}\dotsm a_{-k}}$ for all $k\geq 1$, so
\[
y^{(j)}-v=0\pmod{a_0\dotsm a_{-k}\Z}\quad\text{for all $k\geq 1$ and sufficiently large $j$.}
\]
Since $y^{(j)}$ is rational for all $j$, the real number $v$ must also be rational. Moreover, $y^{(j)}=v$ for sufficiently large $j$, so the sequence $\{y^{(j)}\}_j$ must eventually be constant. That is, $y=y^{(j)}$ for large $j$, hence $y\in N^*$ and $y=v$, so
\[
(v,y)=(y,y)\in \iota(N^*).
\qedhere
\]
\end{proof}
Thus, we get the following theorem.
\begin{thm}\label{duality}
There are isomorphisms
\[
(\R\times\O^*)/N^*\overset{\cong}{\longrightarrow}\what{\R\times\O}/N^\perp\overset{\cong}{\longrightarrow}\nhat,
\]
and the isomorphism $\omega\colon(\R\times\O^*)/N^*\to\nhat$ is given by
\[
\omega\bigl((v,y)+N^*\bigr)(q)=\langle (q,q),(v,y)\rangle
\quad\text{for $(v,y)\in \R\times\O^*$ and $q\in N$.}
\]
\end{thm}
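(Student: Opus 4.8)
The plan is to derive both isomorphisms from the standard Pontryagin duality between closed subgroups and quotients, noting that all the analytic content has already been packaged into the two preceding lemmas. Throughout, write $G=\R\times\Omega$, so that $\varphi:\R\times\Omega^*\to\what G$ is the topological isomorphism furnished by the pairing, and recall that the diagonal copies of $N$ and $N^*$ are closed subgroups of $G$ and of $\R\times\Omega^*$, respectively.

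First I would establish the right-hand isomorphism $\what G/N^\perp\cong\what N$ directly from abstract duality. By \cite[24.11]{hr}, for a closed subgroup $H$ of a locally compact abelian group $G$ the restriction map $\what G\to\what H$, $\chi\mapsto\chi|_H$, is a continuous open surjection with kernel $H^\perp$, and hence descends to a topological isomorphism $\what G/H^\perp\to\what H$. Applying this with $H=N$, which is closed by the earlier Lemma, yields the right-hand isomorphism, under which the class $\chi+N^\perp$ corresponds to $\chi|_N$.

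Next I would produce the left-hand isomorphism by transporting $\varphi$ to the quotients. The preceding Lemma shows that $\varphi$ carries $N^*$ exactly onto $N^\perp$; since $\varphi$ is a homeomorphic group isomorphism and both $N^*$ and $N^\perp$ are closed, $\varphi$ induces a topological isomorphism of quotient groups
\[
\bar\varphi:(\R\times\Omega^*)/N^*\to\what G/N^\perp,\qquad \bar\varphi\bigl((v,y)+N^*\bigr)=\varphi(v,y)+N^\perp.
\]

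Finally I would compose. Setting $\omega$ equal to the composite of $\bar\varphi$ with the restriction isomorphism $\what G/N^\perp\to\what N$, the class $(v,y)+N^*$ is sent first to $\varphi(v,y)+N^\perp$ and then to $\varphi(v,y)|_N$. By the definition of $\varphi$ through the pairing, $\varphi(v,y)(u,x)=\langle(u,x),(v,y)\rangle$, so evaluating the restricted character at $q\in N$---that is, at the diagonal point $(q,q)$---gives
\[
\omega\bigl((v,y)+N^*\bigr)(q)=\langle(q,q),(v,y)\rangle,
\]
which is the asserted formula. The only point demanding care is the clean application of \cite[24.11]{hr}: one must verify its hypotheses (local compactness of $G$ and closedness of $N$, the latter being precisely the content of the earlier Lemma) and use that the restriction map is genuinely surjective with kernel $N^\perp$, so that the induced map is a topological isomorphism rather than merely a continuous bijection. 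Once this is in place the remaining steps are formal, since the two lemmas have already done the work of identifying $\varphi(N^*)=N^\perp$ and of realizing $N$ as a closed subgroup of $\R\times\Omega$.
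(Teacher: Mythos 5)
Your proposal is correct and follows essentially the same route as the paper: the paper states the theorem immediately after the two lemmas with the words ``Thus, we get the following theorem,'' leaving implicit exactly the argument you spell out --- transport $\varphi$ to quotients using $\varphi(\iota(N^*))=\iota(N)^\perp$, and identify $\what{\R\times\Omega}/N^\perp$ with $\what N$ via the standard restriction-of-characters duality for the closed subgroup $N$. Your write-up simply makes explicit the formal steps the paper omits.
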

In particular, by this result one can describe the dual group of all noncyclic subgroups of $\Q$. These groups are compact and connected and are called the solenoids. For example, $(\R\times\mathcal{A}_f)/\Q\cong\what{\Q}$.
\begin{rem}\label{dense-dual}
Let $M$ be a subgroup of $N$ that is dense in $\O$, so that by \lemref{dense}, $M$ is of the form $qN$ for some $q\geq 2$. Then $M^*=qN^*\cong N^*$ is a subgroup of $N^*$ that is dense in $\O^*$. By replacing $M$ with $N$, the above argument gives $(\R\times\O^*)/M^*\cong \what M$.
\end{rem}

\subsection{\texorpdfstring{Modified duality results for $a$-adic groups}{Modified duality results for a-adic groups}}\label{modified-dual}

For any $a$ and integer $n$, let $a^{(n)}$ denote the sequence given by $a^{(n)}_i=a_{-i+n}$. We have $(a^{(n)})^{(n)}_i=a^{(n)}_{-i+n}=a_{-(-i+n)+n}=a_i$, so $(a^{(n)})^{(n)}=a$. In particular, we set $a^*=a^{(0)}$ and $a^\#=a^{(-1)}$.

Now we fix some sequence $a$, and our goal is to explain why $a^\#$ might be a better choice than $a^*$ for our purposes. First of all, note that if $\O^{(n)}$ and $\O^{(m)}$ are the $a$-adic numbers coming from $a^{(n)}$ and $a^{(m)}$, respectively, then $\O^{(n)}$ and $\O^{(m)}$ are isomorphic by \propref{omega-isomorphism} and the succeeding remark.

Let $x\in\O$ and $y\in\O^\#$ and put
\[
z_j=e^{2\pi ix^{(j)}y^{(j)}},
\]
where $x^{(j)}$ and $y^{(j)}$ are the truncations of $x$ and $y$. Then $z_j$ will eventually be constant. Indeed, note that
\[
x^{(j)}=\frac{x_{-k}}{a_{-1}\dotsm a_{-k}}+\dotsm+\frac{x_{-1}}{a_{-1}}+x_0+a_0x_1+\dotsm+a_0\dotsm a_{j-1}x_j
\]
for some $k>0$. Therefore, $a_{-1}\dotsm a_{-j+1}x^{(j)}\in\Z$ for all $j>k$. Similarly, there is an $l>0$ such that $a_0a_1\dotsm a_{j-1}y^{(j)}\in\Z$ for all $j>l$. Hence, for all $j>\max{\{k,l\}}$
\[
x^{(j+1)}y^{(j+1)}-x^{(j)}y^{(j)}=0 \pmod{\Z},
\]
i.e., $z_{j+1}=z_j$. We now define the pairing $\O\times\O^\#\to\T$ by
\[
\langle x,y\rangle_{\O}^{\#}=\lim_{j\to\infty}e^{2\pi ix^{(j)}y^{(j)}}.
\]
The pairing is also in this case a continuous homomorphism in each variable separately and gives an isomorphism $\O^\#\to\ohat$ by a similar method as in \cite[25.1]{hr}. From this it should also be clear that if $x\in N$ and $y\in N^\#$, then
\[
\langle x,y\rangle_{\O}^{\#}=e^{2\pi ixy}.
\]
Moreover, for all $j$
\[
\O^\#/\OO_{-j}^\#\cong\ohat/(\OO_j)^\perp\cong\what{\OO_j},
\]
and there is an isomorphism $\omega\colon(\R\times\O^\#)/N^\#\to\nhat$ given by
\[
\omega\bigl((v,y)+N^\#\bigr)(q)=\langle (q,q),(v,y)\rangle)
\quad\text{for $(v,y)\in \R\times\O^\#$ and $q\in N$.}
\]
\begin{rem}
Let $a$ and $b$ be two sequences. Then $a\sim b$ if and only $a^\#\sim b^\#$. On the other hand, $a\sim b$ does not imply $a^*\sim b^*$ in general (see \remref{a-sharp} and the preceding comments).
\end{rem}
\begin{rem}
Let $x\in\O$ and $y\in\O^{(n)}$ and put
\[
z_j=\begin{cases}e^{2\pi ia_{-1}\dotsm a_{n+1}x^{(j)}y^{(j)}} & n\leq -2,\\
e^{2\pi ix^{(j)}y^{(j)}} & n=-1,\\
e^{2\pi i\frac{1}{a_0\dotsm a_n}x^{(j)}y^{(j)}} & n\geq 0,\end{cases}
\]
where $x^{(j)}$ and $y^{(j)}$ are the truncations of $x$ and $y$. Then $z_j$ will eventually be constant, and we may argue as above to get a pairing.

Finally, it is not hard to see that $N^{(n)}\cong N^{(m)}$ for all $n,m$. In particular, $a_0N^\#=N^*$.
\end{rem}

\section{\texorpdfstring{The $a$-adic duality theorem}{The a-adic duality theorem}}

In general, note that $P^*=P$ and $S^*=S$. Hence, every subgroup $H\subset S$ acting on $N$ and $\O$ also acts on $N^*$ and $\O^*$. In particular, $\BQ(a,H)$ is well-defined if and only if $\BQ(a^*,H)$ is.

\begin{thm}\label{a-adic-duality}
Assume that $P\neq\varnothing$ and that $H$ is a nontrivial subgroup of $S$. Set $G=N\rtimes H$ and $G^*=N^*\rtimes H$. Then there is a Morita equivalence
\[
C_0(\O)\rtimes_\axb G \sim_M C_0(\R)\rtimes_\axb G^*,
\]
where the action on each side is the $ax+b$-action.
\end{thm}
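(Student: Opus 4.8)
The plan is to prove the equivalence in two stages: first peel off the common group $H$, reducing to an $H$-equivariant Morita equivalence of the translation crossed products by $N$ and $N^*$, and then establish that equivalence by the imprimitivity machinery of \secref{dual-groups} together with the subgroup of dual group theorem of \appxref{appendix}. I would write the two sides as iterated crossed products, $C_0(\O)\rtimes_\axb G=(C_0(\O)\rtimes_{\lt}N)\rtimes H$ and $C_0(\R)\rtimes_\axb G^*=(C_0(\R)\rtimes_{\lt}N^*)\rtimes H$, where in each case the normal subgroup acts by translation and $H$ acts through the multiplicative part of $\axb$. Since an equivariant Morita equivalence descends to the crossed products by the acting group, it then suffices to produce an $H$-equivariant equivalence at the level of the $N$- and $N^*$-crossed products.

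The key geometric input is that $N$ embeds as a closed, cocompact subgroup of $\R\times\O$ (the diagonal embedding of \secref{dual-groups}), so that $N$ acts freely and properly by translation there; likewise $N^*$ is closed and cocompact in $\R\times\O^*$. On $\R\times\O$ the first-coordinate translation action of $\R$ commutes with the diagonal translation action of $N$, and both are free and proper, so the symmetric imprimitivity theorem yields
\[
C_0\bigl((\R\times\O)/N\bigr)\rtimes\R \;\sim_M\; C_0\bigl(\R\backslash(\R\times\O)\bigr)\rtimes N \;=\; C_0(\O)\rtimes_{\lt}N .
\]
By \thmref{duality} applied to $a^*$ (so that $(a^*)^*=a$) we have $(\R\times\O)/N\cong\what{N^*}$, and the residual $\R$-action is translation through $\R\to\what{N^*}$. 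Running the symmetric computation on $\R\times\O^*$ with the second-coordinate action of $\O^*$, and using \thmref{duality} itself, gives $C_0(\what N)\rtimes\O^*\sim_M C_0(\R)\rtimes_{\lt}N^*$. In both cases the multiplicative action of $H$ normalizes each of the two commuting translation actions (scaling conjugates a translation to a translation, and $hN=N$, $hN^*=N^*$), so the imprimitivity bimodules are $H$-equivariant; passing to $H$-crossed products I obtain
\[
C_0(\O)\rtimes_\axb G \;\sim_M\; C_0(\what{N^*})\rtimes(\R\rtimes H), \qquad
C_0(\R)\rtimes_\axb G^* \;\sim_M\; C_0(\what N)\rtimes(\O^*\rtimes H).
\]

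It then remains to identify the two right-hand sides. Using Pontryagin duality I would rewrite $C_0(\what{N^*})=C^*(N^*)$ and $C_0(\what N)=C^*(N)$; under these the $\R$-action (resp.\ the $\O^*$-action) becomes the action dual to the dense inclusion $N^*\hookrightarrow\R$ (resp.\ $N\hookrightarrow\O=\what{\O^*}$), with $H$ acting by multiplication. The required equivalence
\[
C^*(N^*)\rtimes(\R\rtimes H)\;\sim_M\;C^*(N)\rtimes(\O^*\rtimes H)
\]
is precisely the situation handled by the subgroup of dual group theorem of \appxref{appendix}, applied to the subgroup $N$ of the dual group $\what{\O^*}=\O$ (equivalently $N^*\subset\R$), which completes the argument.

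The hard part will be exactly this last identification. The two solenoids $\what N$ and $\what{N^*}$ are in general non-isomorphic — $N$ depends only on the negative tail of $a$ while $N^*$ depends only on the positive tail — so the step is not formal: it genuinely trades the connected group $\R$ for the totally disconnected group $\O^*$ and must be driven by Pontryagin duality, which is the role of \appxref{appendix} and of the pairing in \thmref{duality}. The subsidiary technical points — verifying freeness and properness of the translation actions (guaranteed by the cocompactness of $N$ and $N^*$ established in \secref{dual-groups}), checking that the symmetric imprimitivity bimodules are equivariant for the twisted, normalizing action of $H$, and matching the dual pairings with the residual translation actions — are routine, but they must be tracked carefully to ensure the descent to the $H$-crossed products is legitimate.
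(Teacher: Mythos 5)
Your proposal is correct and is essentially the paper's own argument: decompose both sides as iterated crossed products over $N$ (resp.\ $N^*$) and $H$, apply the two-subgroup (Green) case of the symmetric imprimitivity theorem on $\R\times\O$ (resp.\ $\R\times\O^*$), use \thmref{duality} to recognize the orbit space of the diagonal copy of $N$ (resp.\ $N^*$) as $\what{N^*}$ (resp.\ $\what{N}$), and close with an $H$-equivariant application of the subgroup-of-dual-group theorem in its $C_0$-form (\corref{C0 subgroup}) --- exactly the chain of Steps 1--6 in the paper, up to which factor of $\R\times\O$ is paired with the diagonal subgroup. The only caveat is a redundancy in your assembly: a single application of \corref{C0 subgroup} does not identify $C^*(N^*)\rtimes(\R\rtimes H)$ directly with $C^*(N)\rtimes(\O^*\rtimes H)$, but instead carries each back to the \emph{opposite} side of the theorem ($C^*(N^*)\rtimes\R\cong C_0(\R)\rtimes N^*$ via $N^*\subset\R=\what\R$, and $C^*(N)\rtimes\O^*\cong C_0(\O)\rtimes N$ via $N\subset\O=\what{\O^*}$), so either one of your two mirror-image imprimitivity computations, together with the corresponding appendix application (checked $H$-equivariant, as the paper does in its final steps), already completes the proof on its own.
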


\begin{proof}
It will improve the clarity of notation in this proof if we switch the stars; thus, we want to prove
\[
C_0(\O^*)\rtimes_\axb G^* \sim_M C_0(\R)\rtimes_\axb G.
\]
By \cite[Corollary~3.11]{Williams}, we can decompose both sides as iterated crossed products
\begin{align*}
C_0(\O^*)\rtimes_\axb G^*&\cong \bigl(C_0(\O^*)\rtimes_{\axb|_{N^*}} N^*\bigr)\rtimes_{\wilde{\axb}} H,\\
C_0(\R)\rtimes_\axb G&\cong \bigl(C_0(\R)\rtimes_{\axb|_N} N\bigr)\rtimes_{\wilde{\axb}} H,
\end{align*}
where in the first case
\begin{align*}
\wilde{\axb}_h\circ i_{C_0(\O^*)}&=i_{C_0(\O^*)}\circ \axb_h,\\
\wilde{\axb}_h\circ i_{N^*}&=i_{N^*}\circ \wilde{\axb}^2_h,
\end{align*}
where in turn $\wilde{\axb}^2$ is the action of $H$ on $C^*(N^*)$ given by
\[
\wilde{\axb}^2_h(g)(n)=g(h\inv n)
\]
for $g\in C_c(N^*)\subset C^*(N^*)$ and $n\in N^*$ (observe that the action of $H$ on $N^*$ preserves Haar measure because $N^*$ is discrete), and similarly for $C_0(\R)\rtimes G$.

Here, we use standard notation for the canonical embeddings into multiplier algebras of crossed products, and we identify all non-degenerate homomorphisms with their canonical extensions to multiplier algebras.

Our strategy is to find a Morita equivalence
\[
C_0(T/\O)\rtimes_{\lt} N\sim_M C_0(N\under T)\rtimes_{\rt} \O,
\]
where $T=\R\times \O$, that is equivariant for actions $\alpha$ and $\beta$ of $H$ on $C_0(T/\O)\rtimes_{\lt} N$ and $C_0(N\under T)\rtimes_{\rt} \O$, respectively, and then find isomorphisms
\begin{align*}
\bigl(C_0(T/\O)\rtimes_{\lt} N\bigr)\rtimes_\alpha H&\cong
\bigl(C_0(\R)\rtimes_{\axb|_N} N\bigr)\rtimes_{\wilde{\axb}} H,\\
\bigl(C_0(N\under T)\rtimes_{\rt} \O\bigr)\rtimes_\beta H&\cong
\bigl(C_0(\O^*)\rtimes_{\axb|_{N^*}} N^*\bigr)\rtimes_{\wilde{\axb}} H,
\end{align*}

\begin{step}
Recall that $N$ and $\O$ sit inside $T$ as closed subgroups. All the groups are abelian, and therefore, by ``Green's symmetric imprimitivity theorem'' (see for example \cite[Corollary~4.11]{Williams}) we get a Morita equivalence
\begin{equation}\label{Green}
C_0(T/\O)\rtimes_{\lt} N\sim_M C_0(N\under T)\rtimes_{\rt} \O
\end{equation}
via an imprimitivity bimodule $X$ that is a completion of $C_c(T)$. Here, $N$ acts on the left of $T/\O$ by $n\cdot ((t,y)\cdot \O)=(n+t,n+y)\cdot \O$ and $\O$ acts on the right of $N\under T$ by $(N\cdot(t,y))\cdot x=N\cdot (t,y+x)$, and the induced actions on $C_0$-functions are given by
\begin{align*}
\lt_n(f)(p\cdot \O)&=f\bigl(-n\cdot (p\cdot \O)\bigr),\\
\rt_x(g)(N\cdot p)&=g\bigl((N\cdot p)\cdot x\bigr),
\end{align*}
for $n\in N$, $f\in C_0(T/\O)$, $p\in T$, $x\in \O$, and $g\in C_0(N\under T)$. Moreover, $H$ acts by multiplication on $N$, hence on $\O$, and also on $\R$. Thus, $H$ acts diagonally on $T=\R\times\O$ by $h\cdot (t,x)=(ht,h\cdot x)$.

We will show that the Morita equivalence \eqref{Green} is equivariant for appropriately chosen actions of $H$. Define actions $\alpha$, $\beta$, and $\gamma$ of $H$ on $C_c(N,C_0(T/\O))$, $C_c(\O,C_0(T\under N))$, and $C_c(T)$ by
\begin{align*}
\alpha_h(f)(n)\bigl((t,y)\cdot\O\bigr)&=f(hn)\bigl((ht,h\cdot y)\cdot\O\bigr),\\
\beta_h(g)(x)\bigl(N\cdot (t,y)\bigr)&=\delta(h)g(h\cdot x)\bigl(N\cdot(ht,h\cdot y)\bigr),\\
\gamma_h(\xi)(t,y)&=\delta(h)^{\frac{1}{2}}\xi(ht,h\cdot y),
\end{align*}
where $\delta$ is the modular function for the multiplicative action of $H$ on $\O$, i.e., $\delta$ satisfies
\begin{equation}\label{modular}
\int_{\O}\psi(x)\,dx=\delta(h)\int_{\O}\psi(h\cdot x)\,dx
\end{equation}
for all $\psi\in C_c(\O)$. Note that $\alpha,\beta,\gamma$ are actions only because $H$ is abelian. We want $\alpha,\beta,\gamma$ to extend to give an action of $H$ on the $C_0(T/\O)\rtimes_{\lt} N - C_0(N\under T)\rtimes_{\rt} \O$ imprimitivity bimodule $X$, and we must check the conditions \cite[(4.41)-(4.44)]{Williams}.

First, for all $h\in H$, $f\in C_c(N,C_0(T/\O))$, $\xi\in C_c(T)$, and $(t,y)\in T$ we have
\begin{align*}
&\bigl(\alpha_h(f)\cdot\gamma_h(\xi)\bigr)(t,y)\\
&\quad=\sum_{n\in N} \alpha_h(f)(n)\bigl((t,y)\cdot\O\bigr)\gamma_h(\xi)\bigl(-n\cdot (t,y)\bigr)\\
&\quad=\sum_{n\in N} f(hn)\bigl((ht,h\cdot y)\cdot\O\bigr)\delta(h)^{\frac{1}{2}}\xi\bigl(h(t-n),h\cdot(y-n)\bigr)\\
&\quad=\sum_{n\in N} f(n)\bigl((ht,h\cdot y)\cdot\O\bigr)\delta(h)^{\frac{1}{2}}\xi\bigl(ht-n,h\cdot y-n\bigr)\\
&\quad=\delta(h)^{\frac{1}{2}}\sum_{n\in N} f(n)\bigl((ht,h\cdot y)\cdot\O\bigr)\xi\bigl(-n\cdot (ht,h\cdot y)\bigr)\\
&\quad=\delta(h)^{\frac{1}{2}} (f\cdot\xi)(ht,h\cdot y)\\
&\quad=\gamma_h(f\cdot\xi)(t,y),
\end{align*}
where the substitution $hn\mapsto n$ is made in the third equality.

Secondly, for all $h\in H$, $\xi\in C_c(T)$, $g\in C_c(\O,C_0(N\under T))$, and $(t,y)\in T$ we have
\begin{align*}
&\bigl(\gamma_h(\xi)\cdot\beta_h(g)\bigr)(t,y)\\
&\quad=\int_{\O} \gamma_h(\xi)\bigl((t,y)\cdot x\bigr)\beta_h(g)(-x)\bigl(N\cdot (t,y)\cdot x\bigr)\,dx\\
&\quad=\int_{\O} \delta(h)^{\frac{1}{2}}\xi\bigl(ht,h\cdot(y+x)\bigr)\delta(h)g(-h\cdot x)\bigl(N\cdot (ht,h\cdot(y+x))\bigr)\,dx\\
&\quad=\int_{\O} \delta(h)^{\frac{1}{2}}\xi(ht,h\cdot y+x)g(-x)\bigl(N\cdot (ht,h\cdot y+x)\bigr)\,dx\\
&\quad=\delta(h)^{\frac{1}{2}}\int_{\O} \xi\bigl((ht,h\cdot y)\cdot x\bigr)g(-x)\bigl(N\cdot (ht,h\cdot y)\cdot x\bigr)\,dx\\
&\quad=\delta(h)^{\frac{1}{2}}(\xi\cdot g)(ht,h\cdot y)\\
&\quad=\gamma_h(\xi\cdot g)(t,y),
\end{align*}
where in the third equality we shift $h\cdot x\mapsto x$ according to \eqref{modular}.

For the third equation, for all $h\in H$, $\xi,\eta\in C_c(T)$, $n\in N$, and $(t,y)\in T$ we have 
\begin{align*}
&\alpha_h\bigl({_L}\langle \xi,\eta\rangle\bigr)(n)\bigl((t,y)\cdot\O\bigr)\\
&\quad={_L}\langle\xi,\eta\rangle(hn)\bigl((ht,h\cdot y)\cdot\O\bigr)\\
&\quad=\int_{\O}\xi\bigl((ht,h\cdot y)\cdot x\bigr)\overline{\eta\bigl(-hn\cdot (ht,h\cdot y)\cdot x\bigr)}\,dx\\
&\quad=\int_{\O}\delta(h)\xi\bigl(ht,h\cdot y+h\cdot x\bigr)\overline{\eta\bigl(ht-hn,h\cdot y+h\cdot x-hn\bigr)}\,dx\\
&\quad=\int_{\O}\delta(h)^{\frac{1}{2}}\xi\bigl(ht,h\cdot (y+x)\bigr)\delta(h)^{\frac{1}{2}}\overline{\eta\bigl(h(t-n),h\cdot(y+x-n)\bigr)}\,dx\\
&\quad=\int_{\O}\gamma_h(\xi)\bigl((t,y)\cdot x\bigr)\overline{\gamma_h(\eta)\bigl(-n\cdot (t,y)\cdot x\bigr)}\,dx\\
&\quad={_L}\bigl\langle\gamma_h(\xi),\gamma_h(\eta)\bigr\rangle(n)\bigl((t,y)\cdot\O\bigr),
\end{align*}
where the substitution $x\mapsto h\cdot x$ is made in the fourth equality according to \eqref{modular}.

Finally, for all $h\in H$, $\xi,\eta\in C_c(T)$, $x\in\O$, and $(t,y)\in T$ we have
\begin{align*}
&\beta_h\bigl(\langle\xi,\eta\rangle_R\bigr)(x)\bigl(N\cdot(t,y)\bigr)\\
&\quad=\delta(h)\langle\xi,\eta\rangle_R(h\cdot x)\bigl(N\cdot(ht,h\cdot y)\bigr)\\
&\quad=\delta(h)\sum_{n\in N}\overline{\xi\bigl(-n\cdot (ht,h\cdot y)\bigr)}\eta\bigl((-n)\cdot (ht,h\cdot y)\cdot (h\cdot x)\bigr)\\
&\quad=\delta(h)\sum_{n\in N}\overline{\xi(ht-hn,h\cdot y-hn)}\eta\bigl(ht-hn,h\cdot y+h\cdot x-hn\bigr)\\
&\quad=\sum_{n\in N}\delta(h)^{\frac{1}{2}}\overline{\xi\bigl(h(t-n),h\cdot(y-n)\bigr)}\delta(h)^{\frac{1}{2}}\eta\bigl(h(t-n),h\cdot(y+x-n)\bigr)\\
&\quad=\sum_{n\in N}\overline{\gamma_h(\xi)\bigl(-n\cdot (t,y)\bigr)}\gamma_h(\eta)\bigl((-n)\cdot (t,y)\cdot x\bigr)\\
&=\bigl\langle\gamma_h(\xi),\gamma_h(\eta)\bigr\rangle_R(x)\bigl(N\cdot(t,y)\bigr),
\end{align*}
where we shift $n\mapsto hn$ in the fourth equality.

Hence, we can conclude that there is a Morita equivalence (see \cite{combes, cmw})
\begin{equation}\label{morita}
(C_0(T/\O)\rtimes_{\lt} N)\rtimes_{\alpha} H\sim_M (C_0(N\under T)\rtimes_{\rt} \O)\rtimes_{\beta} H.
\end{equation}

In our use of this equivariant Morita equivalence below, it will be convenient to see what the actions $\alpha$ and $\beta$ do to generators: for $h\in H$ we have
\begin{itemize}
\item $\alpha_h\circ i_{C_0(T/\O)}=i_{C_0(T/\O)}\circ \alpha^1_h$,
where $\alpha^1$ is the action of $H$ on $C_0(T/\O)$ given by
\[
\alpha^1_h(f)(p\cdot \O)=f(h\cdot p\cdot \O);
\]

\item $\alpha_h\circ i_N(n)=i_N(hn)$ for $n\in N$;

\item $\beta_h\circ i_{C_0(N\under T)}=i_{C_0(N\under T)}\circ \beta^1_h$,
where $\beta^1$ is the action of $H$ on $C_0(N\under T)$ given by
\[
\beta^1_h(g)(N\cdot p)=g(N\cdot (h\cdot p));
\]

\item $\beta_h\circ i_\O=i_\O\circ \beta^2_h$, where $\beta^2$ is the action of $H$ on $C^*(\O)$ given by
\begin{equation}\label{beta2}
\beta^2_h(g)(x)=\delta(h)g(h\cdot x)\quad\text{for $g\in C_c(\O)$ and $x\in \O$.}
\end{equation}
\end{itemize}
\end{step}

\begin{step}
The isomorphism $\psi\colon T/\O\to \R$ given by
\[
\psi\bigl((t,y)\cdot \O\bigr)=t\quad\text{for $(t,y)\in T$}
\]
transforms the action of $N$ on $T/\O$ to an action on $\R$: for $n\in N$ and $(t,y)\in T$ we have
\begin{align*}
n\cdot \psi\bigl((t,y)\cdot \O\bigr)
&=\psi\Bigl(n\cdot \bigl((t,y)\cdot \O\bigr)\Bigr)
\\&=\psi\Bigl(\bigl((n,n)+(t,y)\bigr)\cdot \O\Bigr)
\\&=\psi\bigl((n+t,n+y)\cdot \O\bigr)
\\&=n+t.
\end{align*}
The isomorphism $\psi$ induces an isomorphism $\psi_*\colon C_0(T/\O)\to C_0(\R)$ given by
\[
\psi_*(f)(t)=f(\psi\inv(t))\quad\text{for $f\in C_0(\R)$ and $t\in\R$.}
\]
The isomorphism $\psi_*$ transforms the action $\lt$ of $N$ on $C_0(T/\O)$ to an action $\rho$ on $C_0(\R)$:
for $n\in N$, $f\in C_0(\R)$, and $t\in\R$ we have
\begin{align*}
\rho_n(f)(t)
&=\psi_*\circ \lt_n\circ \psi_*\inv(f)(t)
\\&=\lt_n\circ \psi_*\inv(f)(\psi\inv(t))
\\&=\lt_n\circ \psi_*\inv(f)\bigl((t,0)\cdot \O\bigr)
\\&=\psi_*\inv(f)\bigl(-n\cdot (t,0)\cdot \O\bigr)
\\&=\psi_*\inv(f)\bigl((t-n,-n)\cdot \O\bigr)
\\&=f\Bigl(\psi\bigl((t-n,-n)\cdot \O\bigr)\Bigr)
\\&=f(t-n),
\end{align*}
and so
\[
\rho=\lt|_N,
\]
and we continue to denote this action by $\lt$.

By construction, $\psi_*$ is $N$-equivariant, and we have a corresponding isomorphism
\[
\psi_*\times N\colon C_0(T/\O)\rtimes_{\lt} N\iso C_0(\R)\rtimes_{\lt} N
\]
determined by
\begin{align*}
\psi_*\times N\circ i_{C_0(T/\O)}&=i_{C_0(T/\O)}\circ \psi_*,\\
\psi_*\times N\circ i_N^{C_0(T/\O)}&=i_N^{C_0(\R)}.
\end{align*}

We will compute the associated action of $H$ on $C_0(\R)\rtimes_{\lt} N$ by considering what $H$ does on the generators. First, $\psi$ transforms the action of $H$ on $T/\O$ to an action on $\R$: for $h\in H$ and $(t,y)\in T=\R\times \O$ we have
\begin{align*}
h\cdot \psi\bigl((t,y)\cdot \O\bigr)
&=\psi\bigl(h\cdot (t,y)\cdot \O\bigr)
\\&=\psi\bigl((ht,h\cdot y)\cdot \O\bigr)
\\&=ht.
\end{align*}
The isomorphism $\psi_*\times N$ transforms the action $\alpha$ of $H$ on $C_0(T/\O)\rtimes_{\lt} N$ to an action $\alpha'$ on $C_0(\R)\rtimes_{\lt} N$, and we compute this action on the generators. Due to how $\alpha$ acts on the generators from $C_0(T/\O)$, for $h\in H$ we have
\[
\alpha_h\circ i_{C_0(T/\O)}=i_{C_0(T/\O)}\circ \alpha'^1_h,
\]
where $\alpha'^1$ is the action of $H$ on $C_0(\R)$ given by
\[
\alpha'^1_h(f)(t)=f(ht)\quad\text{for $f\in C_0(\R)$ and $t\in\R$.}
\]
On the other hand, the action $\alpha'$ behaves the same way on generators from $N$ as $\alpha$ does:
\[
\alpha'_h\circ i_N(n)=i_N(hn)\quad\text{for $n\in N$.}
\]
\end{step}

\begin{step}
To complete the work on the crossed product $C_0(\R)\rtimes_{\axb} G$, we would like to know that our actions $\lt|_N$ of $N$ on $C_0(\R)$ and $\alpha'$ of $H$ on $C_0(\R)\rtimes_{\lt} N$ agree with the decomposition of the $ax+b$-action, so that the isomorphism
\[
\psi_*\times N\colon C_0(T/\O)\rtimes_{\lt} N\iso C_0(\R)\rtimes_{\lt} N
\]
would give an $H$-equivariant isomorphism
\[
C_0(T/\O)\rtimes_{\lt} N\iso C_0(\R)\rtimes_{\axb|_N} N,
\]
as desired. Unfortunately, we will need to tweak the action $\alpha'$ a little bit to make this come out right, as we will see below. However, we can verify immediately what we need for $\axb|_N$: if $n\in N$, $f\in C_0(\R)$, and $t\in\R$ then
\begin{align*}
\bigl(\axb|_N\bigr)_n(f)(t)
&=\axb_n(f)(t)
\\&=f(t-n)
\\&=\lt_n(f)(t),
\end{align*}
so that
\[
\axb|_N=\lt|_N.
\]
On the other hand, if $h\in H$ then
\[
\wilde{\axb}_h\circ i_{C_0(\R)}=i_{C_0(\R)}\circ \axb_h,
\]
where for $f\in C_0(\R)$ and $t\in\R$ we have
\[
\axb_h(f)(t)
=f(h\inv t),
\]
while
\[
\alpha'_h\circ i_{C_0(\R)}=i_{C_0(\R)}\circ \alpha'^1_h,
\]
where
\[
\alpha'^1_h(f)(t)=f(ht).
\]
Similarly,
\[
\wilde{\axb}_h\circ i_N(n)=i_N(h\inv n),
\]
while
\[
\alpha'_h\circ i_N(n)=i_N(hn).
\]
Thus, we see that
\[
\wilde{\axb}_h=\alpha'_{h\inv}.
\]
Therefore, to fix things up we only need to make the following adjustment to the action $\alpha'$: we compose with the inverse map on $H$, which, because $H$ is abelian and discrete, gives an action $\alpha''$ of $H$ on $C_0(\R)\rtimes_{\lt} N$, and now the above computations show that
\[
\alpha''=\wilde{\axb}.
\]
Since the isomorphism $\psi_*\times N$ is equivariant for the actions $\alpha$ and $\alpha'$ of $H$ on $C_0(T/\O)\rtimes_{\lt} N$ and $C_0(\R)\rtimes_{\axb|_N} N$, respectively, and since the crossed products
\[
\bigl(C_0(\R)\rtimes_{\axb|_N} N\bigr)\rtimes_{\alpha'} H
\]
and
\[
\bigl(C_0(\R)\rtimes_{\axb|_N} N\bigr)\rtimes_{\alpha''} H
\]
are isomorphic (because $\alpha''$ is gotten from $\alpha'$ by composing with an automorphism of $H$), we conclude that
\[
\bigl(C_0(T/\O)\rtimes_{\lt} N\bigr)\rtimes_\alpha H
\cong
\bigl(C_0(\R)\rtimes_{\axb|_N} N\bigr)\rtimes_{\alpha''} H,
\]
as desired.
\end{step}

\begin{step}
The isomorphism $\omega\colon N\under T\to \nshat$ of \thmref{duality} transforms the action of $\O$ on $N\under T$ to an action on $\nshat$: for $p\in T$, $x\in \O$, and $n\in N^*$ we have
\begin{align*}
\bigl(\omega(N\cdot p)\cdot x\bigr)(n)
&=\omega\bigl((N\cdot p)\cdot x\bigr)(n)
\\&=\omega\bigl(N\cdot (p\cdot x)\big)(n)
\\&=\bigl\langle p\cdot x,(n,n)\bigr\rangle_T
\\&=\bigl(p+(0,x),(n,n)\bigr\rangle_T
\\&=\bigl\langle p,(n,n)\bigr\rangle_T\bigl\langle (0,x),(n,n)\bigr\rangle_T
\\&=\omega(N\cdot p)(n)\langle x,n\rangle_\O,
\intertext{and, letting $\phi\colon N^*\hookrightarrow \O^*$ be the inclusion, with dual homomorphism $\hat\phi$, we continue the above as}
\\&=\omega(N\cdot p)(n)\langle x,\phi(n)\rangle_\O
\\&=\omega(N\cdot p)(n)\hat\phi(x)(n)
\\&=\bigl(\omega(N\cdot p)\hat\phi(x)\bigr)(n),
\end{align*}
and hence the action of $\O$ on the right of $\nshat$ is given by
\[
\chi\cdot x=\chi\hat\phi(x)\quad\text{for $\chi\in \nshat$.}
\]
The isomorphism $\omega$ induces an isomorphism $\omega_*\colon C_0(N\under T)\to C_0(\nshat)$ given by
\[
\omega_*(f)(\chi)=f(\omega\inv(\chi))\quad\text{for $f\in C_0(\nshat)$ and $\chi\in \nshat$.}
\]
The isomorphism $\omega_*$ transforms the action $\rt$ of $\O$ on $C_0(N\under T)$ to an action $\kappa$ on $C_0(\nshat)$: for $x\in\O$, $f\in C_0(\nshat)$, and $\chi\in \nshat$ we have
\begin{align*}
\kappa_x(f)(\chi)
&=\omega_*\circ \rt_x\circ \omega_*\inv(f)(\chi)
\\&=\rt_x\circ \omega_*\inv(f)(\omega\inv(\chi))
\\&=\omega_*\inv(f)\bigl(\omega\inv(\chi)\cdot x\bigr)
\\&=\omega_*\inv(f)\bigl(\omega\inv(\chi\cdot x)\bigr)
\\&=f(\chi\hat\phi(x)),
\end{align*}
and so
\[
\kappa=\rt\circ\hat\phi,
\]
as in \corref{C0 subgroup}.

By construction, $\omega_*$ is $\O$-equivariant, and we have a corresponding isomorphism
\[
\omega_*\times \O\colon C_0(N\under T)\rtimes_{\rt} \O\iso C_0(\nshat)\rtimes_\kappa \O
\]
determined by
\begin{align*}
\omega_*\times\O\circ i_{C_0(N\under T)}&=i_{C_0(\nshat)}\circ \omega_*,\\
\omega_*\times\O\circ i_\O^{C_0(N\under T)}&=i_\O^{C_0(\nshat)}.
\end{align*}

We will compute the associated action of $H$ on $C_0(\nshat)\rtimes_\kappa \O$ by considering what $H$ does on the generators. First, $\O$ transforms the action of $H$ on $N\under T$ to an action on $\nshat$: for $h\in H$, $(t,y)\in T=\R\times\O$, and $n\in N^*$ we have
\begin{align*}
\bigl(h\cdot \omega(N\cdot (t,y))\bigr)(n)
&=\omega\bigl(N\cdot h\cdot (t,y)\bigr)(n)
\\&=\omega\bigl(N\cdot (ht,h\cdot y)\bigr)(n)
\\&=\bigl\langle (ht,h\cdot y),(n,n)\bigr\rangle_T
\\&=\langle ht,n\rangle_\R\langle h\cdot y,n\rangle_\O
\\&=\langle t,hn\rangle_\R\langle y,hn\rangle_\O
\\&=\bigl\langle (t,y),(hn,hn)\bigr\rangle_\O
\\&=\omega\bigl(N\cdot (t,y)\bigr)(hn),
\end{align*}
and so the action of $H$ on $\nshat$ is given by
\[
(h\cdot \chi)(n)=\chi(hn)\quad\text{for $\chi\in \nshat$ and $n\in N^*$.}
\]
The isomorphism $\omega_*\times \O$ transforms the action $\beta$ of $H$ on $C_0(N\under T)\rtimes_{\rt} \O$ to an action $\beta'$ on $C_0(\nshat)\rtimes_\kappa \O$, and we compute this on the generators. Due to how $\beta$ acts on the generators from $C_0(N\under T)$, for $h\in H$ we have
\begin{align*}
\beta'_h\circ i_{C_0(\nshat)}
&=i_{C_0(\nshat)}\circ \beta'^1_h,
\end{align*}
where $\beta'^1$ is the action of $H$ on $C_0(\nshat)$ given by
\[
\beta'^1_h(f)(\chi)=f(h\cdot \chi)
\quad\text{for $f\in C_0(\nshat)$ and $\chi\in \nshat$.}
\]
On the other hand, the action $\beta'$ behaves the same way on generators from $\O$ as $\beta$ does:
\[
\beta'_h\circ i_\O=i_\O\circ \beta^2_h,
\]
where $\beta^2$ is the action of $H$ on $C^*(\O)$ from \eqref{beta2}.
\end{step}

\begin{step}
We can now apply \corref{C0 subgroup}, with the roles of $H$ and $G$ being played here by $N^*$ and $\O^*$, respectively. Thus, $N^*$ is a locally compact (discrete) group and the inclusion map $\phi\colon N^*\hookrightarrow \O^*$ is continuous. The action $\epsilon$ of the subgroup $N^*$ on $C_0(\O^*)$ is left translation composed with the inclusion $\phi$, and the action $\kappa$ of $\O$ on $C_0(\nshat)$ is right translation composed with the dual homomorphism $\hat\phi$. Thus, \corref{C0 subgroup} gives an isomorphism
\[
\tau\colon C_0(\O^*)\rtimes_\epsilon N^*\iso C_0(\nshat)\rtimes_{\kappa} \O
\]
such that
\[
\tau\circ i_{C_0(\O^*)}=i_\O\circ \FF_\O\inv \midtext{and} \tau\circ i_{N^*}=i_{C_0(\nshat)}\circ \FF_{N^*},
\]
where $\FF$ denotes the Fourier transform.
The action $\beta'$ of $H$ on $C_0(\nshat)\rtimes_\kappa \O$ corresponds under $\tau$ to an action $\beta''$ on $C_0(\O^*)\rtimes_\epsilon N^*$, which we compute on the generators: for $h\in H$ we have
\begin{align*}
\beta''_h\circ i_{C_0(\O^*)}
&=\tau\inv\circ\beta'_h\circ\tau\circ i_{C_0(\O^*)}
\\&=\tau\inv\circ\beta'_h\circ i_\O\circ \FF_\O\inv
\\&=\tau\inv\circ i_\O\circ \beta^2_h\circ \FF_\O\inv
\\&=i_{C_0(\O^*)}\circ \FF_\O\circ \beta^2_h\circ \FF_\O\inv,
\end{align*}
and we compute, for $g\in C_c(\O^*)\subset C_0(\O^*)$ and $x\in \O^*$, that
\begin{align*}
\FF_\O\circ \beta^2_h\circ \FF_\O\inv(g)(x)
&=\int_\O \langle y,x\rangle_\O \beta^2_h\circ \FF_\O\inv(g)(y)\,dy
\\&=\int_\O \langle y,x\rangle_\O \delta(h) \FF_\O\inv(g)(h\cdot y)\,dy
\\&=\int_\O \langle h\inv\cdot y,x\rangle_\O \FF_\O\inv(g)(y)\,dy
\\&=\int_\O \langle y,h\inv\cdot x\rangle_\O \FF_\O\inv(g)(y)\,dy
\\&=g(h\inv\cdot x),
\end{align*}
so that
$\beta''_h\circ i_{C_0(\O^*)}=i_{C_0(\O^*)}\circ \beta''^1_h$,
where $\beta''^1$ is the action of $H$ on $C_0(\O^*)$ given by
\[
\beta''^1_h(g)(x)=g(h\inv\cdot x).
\]
On the other hand, we have
\begin{align*}
\beta''_h\circ i_{N^*}
&=\tau\inv\circ\beta'_h\circ\tau\circ i_{N^*}
\\&=\tau\inv\circ\beta'_h\circ i_{C_0(\nshat)}\circ \FF_{N^*}
\\&=\tau\inv\circ i_{C_0(\nshat)}\circ \beta'^1_h\circ \FF_{N^*}
\\&=i_{N^*}\circ \FF_{N^*}\inv\circ \beta'^1_h\circ \FF_{N^*},
\end{align*}
so we see that
\[
\beta''_h\circ i_{N^*}=i_{N^*}\circ \beta''^2_h,
\]
where $\beta''^2$ is the action of $H$ on $C^*(N^*)$ determined by the following: for $g\in C_c(N^*)$ we have
\begin{align*}
\FF_{N^*}\bigl(\beta''^2_h(g)\bigr)(\chi)
&=\beta'^1_h\bigl(\FF_{N^*}(g)\bigr)(\chi)
\\&=\FF_{N^*}(g)(h\cdot \chi)
\\&=\sum_{n\in N^*} (h\cdot \chi)(n)g(n) 
\\&=\sum_{n\in N^*} \chi(hn)g(n) 
\\&=\sum_{n\in N^*} \chi(n)g(h\inv n), 
\end{align*}
so we see that
\[
\beta''^2_h(g)(n)=g(h\inv n)\quad\text{for $g\in C_c(N^*)$ and $n\in N^*$.}
\]
\end{step}

\begin{step}
To complete the work on the crossed product $C_0(\O^*)\rtimes_{\axb} G^*$, we show that our actions $\epsilon$ and $\beta''$ agree with the decomposition of the $ax+b$-action, so that the composition
\[
\tau\inv\circ (\omega_*\times \O)\colon C_0(N\under T)\rtimes_{\rt}\O \iso C^*(\O^*)\rtimes_\epsilon N^*
\]
gives an $H$-equivariant isomorphism
\[
C_0(N\under T)\rtimes_{\rt} \O\iso C_0(\O^*)\rtimes_{\axb|_{N^*}} N^*,
\]
which will finish the proof.

For the first, if $n\in N^*$, $f\in C_0(\O^*)$, and $x\in \O^*$ then
\begin{align*}
\bigl(\axb|_{N^*}\bigr)_n(f)(x)
&=\axb_n(f)(x)
\\&=f(x-n)
\\&=\lt_n(f)(x),
\end{align*}
so that
\[
\bigl(\axb|_{N^*}\bigr)_n
=\lt_n
=\lt_{\phi(n)}
=\epsilon_n.
\]
For the second, if $h\in H$ then
\[
\wilde{\axb}_h\circ i_{C_0(\O^*)}
=i_{C_0(\O^*)}\circ \axb_h
\]
and
\[
\beta''_h\circ i_{C_0(\O^*)}
=i_{C_0(\O^*)}\circ \beta''^1_h,
\]
and we see immediately from the definitions that $\axb_h=\beta''^1_h$, and for the generators from $N^*$ we have
\[
\wilde{\axb}_h\circ i_{N^*}
=i_{N^*}\circ \wilde{\axb}^2_h
\]
and
\[
\beta''_h\circ i_{N^*}
=i_{N^*}\circ \beta''^2_h,
\]
and again it follows immediately from the definitions that $\wilde{\axb}^2_h=\beta''^2_h$. Thus, we have shown that
\[
\wilde{\axb}_h=\beta''^2_h,
\]
and the proof is complete.
\qedhere
\end{step}
\end{proof}

\begin{rem}\label{zhang}
If $a$ is defined by $a_i=2$ for all $i$ and $H=\langle 2\rangle$, then \thmref{a-adic-duality} coincides with \cite[Theorem~7.5]{LarsenLi}, and if $a$ is the sequence described in \exref{all-primes}, it coincides with \cite[Theorem~6.5]{cuntzq}. A result related to \thmref{a-adic-duality} also appeared in a preprint of Li and L\"{u}ck \cite{LiLuck}, but was left out of the final version.

Moreover, recall that two separable $C^*$-algebras are Morita equivalent if and only if they are stably isomorphic. Hence, the $C^*$-algebras $C_0(\Omega)\rtimes_\axb G$ and $C_0(\mathbb{R})\rtimes_\axb G^*$ will actually be isomorphic by ``Zhang's dichotomy'': 
a separable, simple, purely infinite $C^*$-algebra is either unital or stable.
\end{rem}

\begin{rem}\label{empty-P}
Without any condition on $P$ (or by taking $H=\{1\}$) there is a Morita equivalence
\[
C_0(\O)\rtimes_{\lt} N \sim_M C_0(\R)\rtimes_{\lt} N^*.
\]
These algebras are also isomorphic, as indicated for the $2$-adic case in \cite[Section~5]{kps}. This holds since both algebras are stable, which is seen by describing the left and right hand side as a certain increasing union and as an inductive limit, respectively. In fact, the left hand side is in a natural way isomorphic to a stabilized Bunce-Deddens algebra. The above proof shows that the $H$-actions by multiplication on one side and inverse multiplication on the other are Morita equivalent, and therefore they are also ``outer conjugate'' by \cite[Proposition on p.~16]{combes}.

Moreover, it was explained to us by Jack Spielberg that it is possible to construct an explicit isomorphism between $C_0(\O)\rtimes_{\lt} N$ and $C_0(\R)\rtimes_{\lt} N^*$, but it is not clear whether the isomorphism is equivariant for the $H$-actions.
\end{rem}

\begin{rem}
Let $M$ be a subgroup of $N$ that is dense in $\O$, so that by \lemref{dense} $M$ is of the form $qN$ for some $q\geq 2$. By using \remref{dense-dual} to modify the proof of \thmref{a-adic-duality}, we see that when $H$ is a nontrivial subgroup of $S$ there is a Morita equivalence
\[
C_0(\O)\rtimes_\axb (M\rtimes H) \sim_M C_0(\R)\rtimes_\axb (M^*\rtimes H),
\]
where the action on each side is the $ax+b$-action.
\end{rem}

\begin{rem}\label{compensate}
If $A$ is any closed subgroup of $\R\times\O$ and $B=(\R\times\O)/A$, then
\[
C_0(A)\rtimes (N\rtimes H)\sim_M C_0(\what{B})\rtimes (N^*\rtimes H).
\]
In particular, $C^*(N\rtimes H)\sim_M C_0(\R\times\O^*)\rtimes (N^*\rtimes H)$. Moreover, with the notation from \eqref{NQ}, any group $M\rtimes H$, where $H\subset\langle P\cup Q\rangle$ acts on $M\subset N_Q$ such that $H\cap\langle P\rangle\neq\{1\}$ and $\overline{M}=N$, also gives rise to a UCT~Kirchberg algebra. In the duality theorem, we now have to compensate on the right hand side by using a larger group than $\R$.

For example, if $a=(\cdots,2,2,2,\cdots)$ and $M=\Z[\tfrac{1}{6}]$ we have
\[
C_0(\Q_2)\rtimes_\axb (\Z[\tfrac{1}{6}]\rtimes\langle 2\rangle) \sim_M C_0(\R\times\Q_3)\rtimes_\axb (\Z[\tfrac{1}{6}]\rtimes\langle 2\rangle)
\]
and
\[
C_0(\Q_2)\rtimes_\axb (\Z[\tfrac{1}{6}]\rtimes\langle 3\rangle) \sim_M C_0(\R\times\Q_3)\rtimes_\axb (\Z[\tfrac{1}{6}]\rtimes\langle 3\rangle).
\]
The algebras in the first display are UCT~Kirchberg algebras, while the actions in the second display are no longer locally contractive. This is the case since $P=\{2\}$ and $\langle 3\rangle\cap\langle 2\rangle=\{1\}$. Indeed, on the right hand side, the action of $\tfrac{1}{3}$ is contractive on $\R$ but expansive on $\Q_3$ and vice versa for $3$.
\end{rem}

\section{Invariants and isomorphism results}\label{isomorphisms}

Let $\mathbb{P}$ be the set of prime numbers.
A supernatural number is a function
\[
\lambda\colon\mathbb P\to\N\cup\{\infty\}
\]
such that $\sum_{p\in\mathbb P}\lambda(p)=\infty$.
Denote the set of supernatural numbers by $\mathbb{S}$.
It may sometimes be useful to consider a supernatural number as an infinite formal product
\[
\lambda=2^{\lambda(2)}3^{\lambda(3)}5^{\lambda(5)}7^{\lambda(7)}\dotsm.
\]

Let $\lambda$ and $\rho$ be two supernatural numbers associated with the sequence $a$ in the following way:
\[
\begin{split}
\lambda(p)&=\sup{\{i:p^i\text{ divides }a_0\dotsc a_j\text{ for some }j\geq 0\}}\in\N\cup\{\infty\}\\
\rho(p)&=\sup{\{i:p^i\text{ divides }a_{-1}\dotsc a_{-k}\text{ for some }k\geq 1\}}\in\N\cup\{\infty\}
\end{split}
\]

\begin{lem}\label{supernatural}
Let $a$ and $b$ be two sequences. The following hold:
\begin{itemize}
\item[(i)] $\Delta_a\cong\Delta_b$ if and only if $\lambda_a=\lambda_b$.
\item[(ii)] $N_a=N_b$ if and only if $\rho_a=\rho_b$.
\item[(iii)] $\UU_a=\UU_b$ if and only if both $\lambda_a=\lambda_b$ and $\rho_a=\rho_b$.
\end{itemize}
\end{lem}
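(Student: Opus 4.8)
The plan is to convert each statement into a question about $p$-adic valuations, prime by prime. For a nonzero rational $x$ write $v_p(x)$ for the exponent of $p$ in $x$. The one combinatorial fact I will use repeatedly is an \emph{achievability} observation: if $m\in\N$ satisfies $v_p(m)\le\lambda_a(p)$ for every prime $p$, then $m$ divides a single product $a_0\dotsm a_j$ (and dually for $n$, $\rho_a$, and the negative tail). Indeed, $m$ has only finitely many prime divisors $p_1,\dotsc,p_r$, and for each one the definition of $\lambda_a$ supplies an index $j_i$ with $p_i^{v_{p_i}(m)}\mid a_0\dotsm a_{j_i}$; taking $j=\max_i j_i$ and using that the products are nested gives $m\mid a_0\dotsm a_j$ since the $p_i$ are distinct.

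For (ii) I will show directly that
\[
N_a=\{x\in\Q: v_p(x)\ge -\rho_a(p)\text{ for all }p\}.
\]
The inclusion $\subseteq$ is immediate from $v_p(j/(a_{-1}\dotsm a_{-k}))\ge -v_p(a_{-1}\dotsm a_{-k})\ge-\rho_a(p)$; for $\supseteq$, write such an $x$ in lowest terms $x=j/m$, note $v_p(m)\le\rho_a(p)$ for all $p$, and apply achievability to get $m\mid a_{-1}\dotsm a_{-k}$, whence $x\in N_a$. From this formula $N_a$ is determined by $\rho_a$, and conversely $\rho_a(p)=\sup\{-v_p(x):x\in N_a\}$ is determined by $N_a$; so $N_a=N_b$ iff $\rho_a=\rho_b$. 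For (iii) I will first record that two subgroups $\tfrac{m}{n}\Z,\tfrac{m'}{n'}\Z$ of $\Q$ in lowest terms (with $m,n\ge1$) coincide iff $m=m'$ and $n=n'$, so the elements of $\UU_a$ are in bijection with the pairs $(m,n)$ of coprime positive integers satisfying $v_p(m)\le\lambda_a(p)$ and $v_p(n)\le\rho_a(p)$ for all $p$ (again using achievability to see that the divisibility conditions defining $\UU_a$ are exactly these valuation bounds). Thus $\UU_a$ determines, and is determined by, this set of pairs, and recovering the invariants via $\lambda_a(p)=\sup\{e:p^e\Z\in\UU_a\}$ and $\rho_a(p)=\sup\{e:\tfrac1{p^e}\Z\in\UU_a\}$ gives $\UU_a=\UU_b$ iff $\lambda_a=\lambda_b$ and $\rho_a=\rho_b$.

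Part (i) is where the real work lies. Using the Chinese remainder theorem on each finite quotient $\Z/(a_0\dotsm a_{j-1})\Z\cong\prod_p\Z/p^{v_p(a_0\dotsm a_{j-1})}\Z$ and passing to the inverse limit, I obtain a topological isomorphism
\[
\Delta_a\cong\prod_p C_p,\qquad C_p=\begin{cases}\Z/p^{\lambda_a(p)}\Z & \lambda_a(p)<\infty\\ \Z_p & \lambda_a(p)=\infty,\end{cases}
\]
because $v_p(a_0\dotsm a_{j-1})$ increases to $\lambda_a(p)$. The ``if'' direction of (i) is then immediate. For ``only if'' I must argue that $\lambda_a$ is a topological-isomorphism invariant of $\Delta_a$. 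The clean way is to note that $C_p$ is the (topologically characteristic) pro-$p$ Sylow subgroup of the profinite abelian group $\Delta_a$, so any isomorphism $\Delta_a\cong\Delta_b$ carries the $a$-factor onto the $b$-factor; since $\Z/p^m\Z\cong\Z/p^n\Z$ iff $m=n$, while $\Z_p$ is infinite and the finite cyclic $p$-groups are finite, this forces $\lambda_a(p)=\lambda_b(p)$ for every $p$. Alternatively, and avoiding structure theory, one checks that $|\Delta_a/\overline{p^n\Delta_a}|=p^{\min(n,\lambda_a(p))}$, so that $\lambda_a(p)=\lim_n\log_p|\Delta_a/\overline{p^n\Delta_a}|$ is manifestly an isomorphism invariant. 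The main obstacle is precisely this recovery of $\lambda_a$ from the abstract topological group $\Delta_a$; everything in (ii) and (iii) is valuation bookkeeping resting on the achievability observation.
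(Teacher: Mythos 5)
Your proof is correct and follows essentially the same route as the paper's: the paper's entire proof consists of citing Hewitt--Ross Theorem~25.16 for the decomposition $\Delta\cong\prod_{p\in\lambda^{-1}(\infty)}\Z_p\times\prod_{p\in\lambda^{-1}(\N)}\Z/p^{\lambda(p)}\Z$, declaring (i) immediate, and noting that (ii) and (iii) ``are not difficult to see.'' You derive that same prime-by-prime decomposition yourself (Chinese remainder theorem plus commuting the inverse limit with the product), and the rest of your argument --- recovering $\lambda_a(p)$ as a topological-isomorphism invariant via pro-$p$ Sylow subgroups or the indices $|\Delta_a/p^n\Delta_a|$, together with the valuation bookkeeping for (ii) and (iii) --- supplies exactly the details the paper omits.
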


\begin{proof}
From \cite[Theorem~25.16]{hr} we have
\[
\Delta\cong\prod_{p\in\lambda\inv(\infty)}\Z_p\times\prod_{p\in\lambda\inv(\N^{\times})}\Z/p^{\lambda(p)}\Z
\]
and hence (i) holds. It is not difficult to see that condition~(ii) and~(iii) also hold.
\end{proof}

This means that there is a one-to-one correspondence between supernatural numbers and noncyclic subgroups of $\Q$ containing $\Z$, and also between supernatural numbers and Hausdorff completions of $\Z$.

Condition~(iii) is equivalent to $a\sim b$, which means that there exists an isomorphism $\O_a\to\O_b$ that maps $1$ to $1$. More generally, the following result clarifies when $\O_a$ and $\O_b$ are isomorphic.

\begin{prop}\label{omega-isomorphism}
Let $a$ and $b$ be two sequences. Then $\O_a\cong\O_b$ if and only if there are natural numbers $p$ and $q$ such that
\[
a'=(\dotsc,a_{-2},qa_{-1},pa_0,a_1,\dotsc)\sim (\dotsc,b_{-2},pb_{-1},qb_0,b_1,\dotsc)=b'.
\]
\end{prop}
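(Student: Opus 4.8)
The plan is to reduce both implications to the supernatural-number invariants $\lambda,\rho$ attached to a sequence, exactly as in the preceding Lemma, and to read the stated twisting as a bookkeeping device for relocating the distinguished compact-open subgroup inside a fixed group.

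I begin with the arithmetic reformulation, which is routine. For the twisted sequences $a'=(\dotsc,qa_{-1},pa_0,\dotsc)$ and $b'=(\dotsc,pb_{-1},qb_0,\dotsc)$ one reads off directly from the definitions of $\lambda$ and $\rho$ that, for every prime $\ell$,
\[
\lambda_{a'}(\ell)=\lambda_a(\ell)+v_\ell(p),\ \ \rho_{a'}(\ell)=\rho_a(\ell)+v_\ell(q),\ \ \lambda_{b'}(\ell)=\lambda_b(\ell)+v_\ell(q),\ \ \rho_{b'}(\ell)=\rho_b(\ell)+v_\ell(p),
\]
where $v_\ell$ is the $\ell$-adic valuation. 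By part (iii) of the preceding Lemma, $a'\sim b'$ means $\lambda_{a'}=\lambda_{b'}$ and $\rho_{a'}=\rho_{b'}$; writing $d(\ell)=v_\ell(p)-v_\ell(q)$ these two equalities collapse to the single requirement $\lambda_b(\ell)-\lambda_a(\ell)=\rho_a(\ell)-\rho_b(\ell)=d(\ell)$, with the convention that an $\infty$ entry on one side forces one on the other. Since $p,q$ are natural numbers, $d$ is an \emph{arbitrary finitely supported} integer-valued function, recovered by $v_\ell(p)=\max(d(\ell),0)$ and $v_\ell(q)=\max(-d(\ell),0)$. Thus the existence of $p,q$ as in the statement is equivalent to the existence of a finitely supported $d\colon\mathbb P\to\Z$ with $\lambda_b-\lambda_a=\rho_a-\rho_b=d$; in particular $\lambda_a(\ell)=\infty\iff\lambda_b(\ell)=\infty$, $\rho_a(\ell)=\infty\iff\rho_b(\ell)=\infty$, the total $\mu:=\lambda+\rho$ agrees for $a$ and $b$ everywhere, and $(\lambda_a,\rho_a)=(\lambda_b,\rho_b)$ off a finite set of primes. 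This purely combinatorial condition is what I will match to $\Omega_a\cong\Omega_b$.

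For the forward direction I would use \lemref{open-subgroups}. Given a topological isomorphism $f\colon\Omega_a\to\Omega_b$, the image $f(\Delta_a)$ is a compact open subgroup of $\Omega_b$, hence equals $\overline{U}$ for some $U=\tfrac mn\Z\in\UU_b$, which I take with $\gcd(m,n)=1$. Designating $\overline{U}=\overline{\tfrac mn\Z}$ as a new origin exhibits $\Omega_b$ as the $a$-adic group of a sequence $c$, and the point is that relocating the origin from $\overline{\Z}$ to $\overline{\tfrac mn\Z}$ replaces $(\lambda_b,\rho_b)$ by $(\lambda_b-d,\rho_b+d)$ with $d(\ell)=v_\ell(m)-v_\ell(n)$, the shift being applied in whichever of the two coordinates is finite while the finite/infinite type at each prime is left unchanged. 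Now $f$ is a pair-isomorphism $(\Omega_a,\Delta_a)\cong(\Omega_c,\Delta_c)$, so by the Notation following \lemref{open-subgroups} we have $a\sim c$, i.e.\ $(\lambda_a,\rho_a)=(\lambda_c,\rho_c)$; since $c$ differs from $b$ by the finitely supported shift $d$, this is exactly the combinatorial condition above, and taking $p=m$, $q=n$ produces the required natural numbers. (The two-sided form of the twist—multiplying $a$'s negative tail rather than dividing $b$'s—is precisely what keeps both sequences integral.) For the converse I would run this in reverse: given $p,q$ with $a'\sim b'$, the shift $d$ tells me which $\overline{U}=\overline{\tfrac mn\Z}\in\UU_b$ to designate as the new origin; re-basing $\Omega_b$ there yields a sequence $c$ with $(\lambda_c,\rho_c)=(\lambda_a,\rho_a)$, whence $a\sim c$ by the preceding Lemma, and the resulting pair-isomorphism $(\Omega_a,\Delta_a)\cong(\Omega_c,\Delta_c)=(\Omega_b,\overline{U})$ is in particular an isomorphism $\Omega_a\cong\Omega_b$.

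The main obstacle is the bookkeeping in the re-basing step: I must verify that moving the compact-open subgroup from $\overline{\Z}$ to $\overline{\tfrac mn\Z}$ transforms $(\lambda,\rho)$ exactly by the finitely supported shift $d(\ell)=v_\ell(m)-v_\ell(n)$, and this has to be checked across all four local behaviours of a prime (both tails infinite, only $\lambda$ infinite, only $\rho$ infinite, both finite) with the $\infty$ conventions handled correctly. The delicate point is that $\Omega_a$ \emph{alone} sees only the total $\mu=\lambda+\rho$ together with the finite/infinite type at each prime, whereas the pair $(\Omega_a,\Delta_a)$ additionally records the index $\ell^{\rho(\ell)}$ of $\Delta_a$ in the local factor when $\lambda(\ell)=\infty$, and the level $\ell^{\lambda(\ell)}$ when $\rho(\ell)=\infty$; tracking these indices is what forces $d$ to be finitely supported, and hence forces $p,q$ to be genuine integers rather than merely supernatural. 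Making this rigorous needs the local (per-prime) decomposition of $\Omega_a$, which may be quoted from Hewitt--Ross \cite{hr}, together with the fact that any isomorphism of these groups respects the primary structure and so cannot permute distinct primes.
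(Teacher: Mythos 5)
Your proposal is correct, but it reaches the conclusion by a genuinely different route than the paper. Both arguments open identically: given an isomorphism $\varphi\colon\O_a\to\O_b$, apply \lemref{open-subgroups} to write $\varphi(\Delta_a)=\overline{V}$ with $V=\tfrac{p}{q}\Z\in\UU_b$. From there the paper never leaves the level of the families $\UU$: using that $\varphi$ commutes with integer multiplication (so $n\,\varphi(\overline{U})=m\,\varphi(\Delta_a)$ for $U=\tfrac{m}{n}\Z\in\UU_a$), injectivity, and \remref{automorphism}, it shows that $\varphi$ itself satisfies $\varphi(\overline{\tfrac{m}{n}\Z})=\overline{\tfrac{pm}{qn}\Z}$, hence $\UU_b=\{\tfrac{p}{q}U:U\in\UU_a\}$, which is precisely the displayed equivalence of twisted families; the converse is the observation (recorded in the subsequent Corollary) that multiplication by $p/q$ is a homeomorphism $(N_a,\UU_a)\to(N_b,\UU_b)$ and so extends to the completions. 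You instead translate everything into the invariants $(\lambda,\rho)$, formalize the re-basing $(\O_b,\overline{\tfrac{m}{n}\Z})\cong(\O_c,\Delta_c)$ with $(\lambda_c,\rho_c)=(\lambda_b-d,\rho_b+d)$, and then must invoke the pair-isomorphism characterization of $\sim$ from the unlabeled Notation following \lemref{open-subgroups}, together with the lemma opening \secref{isomorphisms} --- facts the paper asserts with at most a reference to \cite[Theorem~25.16]{hr}. So your proof outsources the hard content to structure theory (primary decomposition of $\Delta$ and of $N/\Z$, which is what "isomorphisms cannot permute primes" amounts to), whereas the paper's argument is self-contained apart from \lemref{open-subgroups} and \remref{automorphism}, and it yields slightly more, namely the explicit action of the given isomorphism on every compact open subgroup. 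What your route buys is conceptual clarity (isomorphism classes of the $\O_a$ are exactly pairs of supernatural numbers modulo finitely supported shifts) and an explicit treatment of both implications, the second of which the paper leaves implicit. Two caveats. First, the re-basing identity $\tfrac{n}{m}\UU_b=\UU_c$ is an elementary manipulation of the families and does not require the local decomposition of $\O$ that you appeal to at the end; invoking it there is heavier machinery than needed. Second, your closing heuristic that $\O_a$ alone sees only the total $\mu=\lambda+\rho$ and the finite/infinite types is not quite right: one also needs $(\lambda,\rho)$ to agree off a finite set (for instance $\lambda_a=\rho_a\equiv 1$ versus the pair taking values $(2,0)$ and $(0,2)$ on alternating primes gives equal totals and types but nonisomorphic groups). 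Your actual argument never uses that claim --- only the finitely supported shift coming from the integrality of $m$ and $n$ --- so nothing breaks, but the remark should be deleted or corrected.
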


\begin{proof}
If $a'\sim b'$, then the map $1\mapsto\tfrac{p}{q}$ gives an isomorphism $\O_a\to\O_b$.

Suppose that there exists an isomorphism $\varphi\colon\O_a\to\O_b$. Then $\varphi(0)=0$, so open neighborhoods around $0\in\O_a$ map to open neighborhoods around $0\in\O_b$. In particular, $\varphi(\Delta_a)$ must be a compact open subgroup of $\O_b$, that is $\varphi(\Delta_a)=\overline{V}$ for some $V=\tfrac{p}{q}\Z\in\UU_b$ by \lemref{open-subgroups}. Hence, for all $U=\frac{m}{n}\Z\in\UU_a$ we have
\[
n\cdot\varphi(\overline{U})=m\cdot\varphi(\Delta_a)=m\cdot\overline{V}=\overline{m\tfrac{p}{q}\Z}.
\]
If $m'$ divides $m$ and $n'$ divides $n$, then $\frac{m'}{n'}\Z\in\UU_a$ as well. Thus, the injectivity of $\varphi$ implies that $m\tfrac{p}{q}\Z\in\UU_b$ and thus also that $\tfrac{pm}{qn}\Z\in\UU_b$, after applying \remref{automorphism}. Consequently,
\[
\varphi(\overline{\tfrac{m}{n}\Z})=\overline{\tfrac{pm}{qn}\Z},
\]
and $\tfrac{m}{n}\Z$ belongs to $\UU_a$ if and only if $\tfrac{pm}{qn}\Z$ belongs to $\UU_b$. Hence, $\UU_b=\{\tfrac{p}{q}U:U\in\UU_a\}$, so we conclude that
\[
\UU_a'=\{\tfrac{m}{n}U:m|p,n|q,U\in\UU_a\}=\{\tfrac{m}{n}V:m|q,n|p,V\in\UU_b\}=\UU_b'.
\qedhere
\]
\end{proof}

\begin{rem}
We can now see that the structure of $\O$ is preserved under the following operations:
\begin{itemize}
\item factoring out entries, that is, for $a_i=cd$, \\ $(\dotsc,a_{i-1},a_i,a_{i+1},\dotsc)\mapsto(\dotsc,a_{i-1},c,d,a_{i+1},\dotsc)$.
\item multiplying entries, that is, \\ $(\dotsc,a_{i-1},a_i,a_{i+1},a_{i+2}\dotsc)\mapsto(\dotsc,a_{i-1},a_ia_{i+1},a_{i+2},\dotsc)$.
\item interchanging entries, that is, \\ $(\dotsc,a_{i-1},a_i,a_{i+1},a_{i+2}\dotsc)\mapsto(\dotsc,a_{i-1},a_{i+1},a_i,a_{i+2},\dotsc)$.
\item shifting the sequence, that is, $a\mapsto b$, where $b_i=a_{i+n}$ for some $n\in\Z$.
\end{itemize}

However, the following operations on the sequence do not in general preserve the structure of $\O$:
\begin{itemize}
\item removing an entry, that is, \\ $(\dotsc,a_{i-1},a_i,a_{i+1},a_{i+2}\dotsc)\mapsto(\dotsc,a_{i-1},a_{i+1},a_{i+2},\dotsc)$.
\item adding an entry, that is, \\ $(\dotsc,a_{i-1},a_i,a_{i+1},\dotsc)\mapsto(\dotsc,a_{i-1},a_i,c,a_{i+1},\dotsc)$.
\item reflecting the sequence, that is, $a\mapsto b$, where $b_i=a_{-i+n}$ for some $n\in\Z$.
\end{itemize}
The first two operations preserve the structure of $\O$ if and only if the prime factors of the entries removed or added occur infinitely many times in the sequence. When $\O$ is self-dual, reflections will preserve the structure (see \propref{self-duality}).
\end{rem}

Note that if $\O_a\cong\O_b$, then $\lambda_a(p)+\rho_a(p)=\lambda_b(p)+\rho_b(p)$ for all primes $p$. In particular, $Q_a=\lambda_a\inv(0)\cap\rho_a\inv(0)=\lambda_b\inv(0)\cap\rho_b\inv(0)=Q_b$. That means that if $q\geq 2$, $qN_a$ is dense in $\O_a$ if and only if $qN_b$ is dense in $\O_b$ (see \lemref{dense} and \remref{automorphism}).

\begin{cor}\label{omega-iso}
We have $\O_a\cong\O_b$ if and only if there exists a $(\UU_a,\UU_b)$-continuous isomorphism $N_a\to N_b$.

Moreover, if $\O_a\cong\O_b$, then $P_a=P_b$, so $S_a=S_b$.
\end{cor}

\begin{proof}
This second statement holds since both $\lambda_a\inv(\infty)=\lambda_b\inv(\infty)$ and $\rho_a\inv(\infty)=\rho_b\inv(\infty)$ and thus
\[
S_a=\langle P_a\rangle=\langle \lambda_a\inv(\infty)\cap\rho_a\inv(\infty)\rangle
=\langle\lambda_b\inv(\infty)\cap\rho_b\inv(\infty)\rangle=\langle P_b\rangle=S_b.
\]
The first statement is a consequence of the proof of \propref{omega-isomorphism}. If $p$ and $q$ are as in that proof, then $N_b=\tfrac{p}{q}N_a$. Moreover, the isomorphism $N_a\to N_b$ given by $r\mapsto \tfrac{p}{q}r$ is continuous with respect to $(\UU_a,\UU_b)$ and therefore extends to an isomorphism $\O_a\to\O_b$ of the completions.
\end{proof}

If $\lambda$ is a supernatural number and $p$ is a prime, let $p\lambda$ denote the supernatural number given by $(p\lambda)(p)=\lambda(p)+1$ (with the convention that $\infty+1=\infty$) and $(p\lambda)(q)=\lambda(q)$ if $p\neq q$. The definition of $p\lambda$ extends to all natural numbers by prime factorization.

For a sequence $a$, we let $\lambda^*$ and $\rho^*$ be the supernatural numbers associated with $a^*$. Note that in general one has $\lambda^*=a_0\rho$, so $a\sim a^*$ if and only if $\lambda=a_0\rho$.

\begin{rem}\label{a-sharp}
With the notation of \subsecref{modified-dual}, we have $\lambda=\rho$ if and only if $a\sim a^\#$.
\end{rem}

\begin{cor}[of \propref{omega-isomorphism}]
Let $a$ and $b$ be two sequences. Then $\O_a\cong\O_b$ if and only if there are natural numbers $p$ and $q$ such that $p\lambda_a=q\lambda_b$ and $q\rho_a=p\rho_b$.
\end{cor}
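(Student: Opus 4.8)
The plan is to deduce the corollary directly from Proposition~\ref{omega-isomorphism} by rewriting the relation $\sim$ in terms of the supernatural numbers $\lambda$ and $\rho$. Here I interpret $p\lambda$ for a general natural number $p=\prod_r r^{v_r(p)}$ as the supernatural number with $(p\lambda)(r)=\lambda(r)+v_r(p)$, where $v_r(p)$ is the exponent of the prime $r$ in $p$; this extends the definition given above for prime $p$ and corresponds to multiplying formal products.

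First I would invoke Proposition~\ref{omega-isomorphism}: $\O_a\cong\O_b$ holds if and only if there are natural numbers $p,q$ with $a'\sim b'$, where
\[
a'=(\dotsc,a_{-2},qa_{-1},pa_0,a_1,\dotsc),\qquad b'=(\dotsc,b_{-2},pb_{-1},qb_0,b_1,\dotsc).
\]
Since $a'\sim b'$ means $\UU_{a'}=\UU_{b'}$, part~(iii) of the preceding Lemma turns this into the pair of equalities $\lambda_{a'}=\lambda_{b'}$ and $\rho_{a'}=\rho_{b'}$. Thus everything reduces to computing the four supernatural numbers on the left in terms of those attached to $a$ and $b$.

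This computation is the core step, and it is a short valuation argument. The sequence $a'$ agrees with $a$ except that $a_0$ is replaced by $pa_0$ and $a_{-1}$ by $qa_{-1}$; hence $a'_0\cdots a'_j=p\,a_0\cdots a_j$ for all $j\geq 0$ and $a'_{-1}\cdots a'_{-k}=q\,a_{-1}\cdots a_{-k}$ for all $k\geq 1$. Taking the $r$-adic valuation of each product and then the supremum over $j$ (resp.\ $k$), and using that the shift by $v_r(p)$ (resp.\ $v_r(q)$) is independent of the truncation level, I obtain $\lambda_{a'}=p\lambda_a$ and $\rho_{a'}=q\rho_a$. The same computation applied to $b'$, where now $b_0$ is scaled by $q$ and $b_{-1}$ by $p$, yields $\lambda_{b'}=q\lambda_b$ and $\rho_{b'}=p\rho_b$.

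Substituting these identities into $\lambda_{a'}=\lambda_{b'}$ and $\rho_{a'}=\rho_{b'}$ produces exactly $p\lambda_a=q\lambda_b$ and $q\rho_a=p\rho_b$, which is the asserted condition. I do not anticipate a genuine obstacle; the only point requiring care is that the additive shift by a valuation commutes with the suprema defining $\lambda$ and $\rho$, which holds precisely because that shift does not depend on $j$ or $k$.
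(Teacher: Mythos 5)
Your proof is correct and follows exactly the route the paper intends: the corollary is deduced from Proposition~\ref{omega-isomorphism} by translating $a'\sim b'$ into $\lambda_{a'}=\lambda_{b'}$, $\rho_{a'}=\rho_{b'}$ via part~(iii) of the preceding lemma, and your valuation computation $\lambda_{a'}=p\lambda_a$, $\rho_{a'}=q\rho_a$ (and symmetrically for $b'$) supplies the details the paper leaves implicit. No gaps.
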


\begin{prop}\label{self-duality}
The group of $a$-adic numbers $\O$ is self-dual if and only if there are natural numbers $p$ and $q$ such that $p\lambda=q\rho$.
\end{prop}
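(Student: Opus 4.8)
The plan is to reduce self-duality to an isomorphism between $\O$ and the $a^\#$-adic group $\O^\#$ of Subsection~\ref{modified-dual}, and then apply the Corollary of \propref{omega-isomorphism}. First I would recall that the pairing constructed in Subsection~\ref{modified-dual} furnishes an isomorphism $\O^\#\to\ohat$; consequently $\O$ is self-dual (that is, $\O\cong\ohat$) precisely when $\O\cong\O^\#$ as abelian topological groups. One could instead work with $\O^*$, since $\O^*\cong\O^\#$, but the $\#$-operation gives cleaner bookkeeping because it swaps the two tails of the sequence exactly.

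Next I would compute the supernatural numbers attached to $a^\#$ directly from the definitions. Writing $a^\#_i=a_{-i-1}$, one has $a^\#_0\cdots a^\#_j=a_{-1}a_{-2}\cdots a_{-j-1}$ and $a^\#_{-1}\cdots a^\#_{-k}=a_0a_1\cdots a_{k-1}$, so passing to $a^\#$ simply interchanges the roles of the positive and negative tails. Taking suprema of the $p$-adic valuations of these products yields
\[
\lambda_{a^\#}=\rho_a \and \rho_{a^\#}=\lambda_a.
\]

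Finally I would invoke the Corollary of \propref{omega-isomorphism} with $b=a^\#$: the groups $\O_a$ and $\O_{a^\#}$ are isomorphic if and only if there are natural numbers $p,q$ with $p\lambda_a=q\lambda_{a^\#}$ and $q\rho_a=p\rho_{a^\#}$. Substituting the two identities above, both of these conditions read $p\lambda_a=q\rho_a$, so they coincide into the single condition that there exist $p,q$ with $p\lambda=q\rho$. Combined with the reduction in the first paragraph, this proves the proposition.

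The computation is short, so there is no genuine obstacle; the only points requiring care are the orientation of the duality (confirming $\ohat\cong\O^\#$ rather than its reflection) and the swap identities for $\lambda$ and $\rho$ in the second step. It is also worth emphasizing that self-duality only asks for an isomorphism of topological groups, not one fixing $1$, so the Corollary is the correct tool rather than the stronger relation $a\sim a^\#$; the special case $p=q=1$ recovers exactly $a\sim a^\#$, i.e.\ \remref{a-sharp}.
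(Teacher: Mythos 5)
Your proof is correct and follows essentially the same route as the paper: reduce self-duality to an isomorphism of $\O$ with the group attached to the reflected sequence via the duality results of \secref{dual-groups}, then apply the corollary of \propref{omega-isomorphism} to translate that into a condition on supernatural numbers. The only difference is cosmetic: you work with $a^{\#}$ and the swap identities $\lambda_{a^{\#}}=\rho_a$, $\rho_{a^{\#}}=\lambda_a$, which yields $p\lambda=q\rho$ directly, whereas the paper works with $a^{*}$ and the identity $\lambda^{*}=a_0\rho$, obtaining $p'\lambda=q'a_0\rho$ and then absorbing the factor $a_0$.
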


\begin{proof}
From \secref{dual-groups}, with reference to \cite[25.1]{hr}, we get $\O\cong\ohat$ if and only if $\O\cong\O^*$. Thus, \propref{omega-isomorphism} and the comment above imply that $\O\cong\ohat$ if and only if there are natural numbers $p',q'$ such that $p'\lambda=q'a_0\rho$. Clearly, this is equivalent to the existence of natural numbers $p$ and $q$ such that $p\lambda=q\rho$.
\end{proof}

We now discuss how the supernatural numbers are related to isomorphism invariants for the $a$-adic algebras, both directly and via the $a$-adic duality theorem.

\begin{rem}\label{q-isomorphism}
If $M_1$ and $M_2$ are two subgroups of $\Q$, then $M_1\cong M_2$ if and only if $M_1=rM_2$ for some positive rational $r$. Indeed, every homomorphism $M_i\to\Q$ is completely determined by its value at one point. Thus, if $M_1$ and $M_2$ are noncyclic subgroups of $\Q$, with associated supernatural numbers $\lambda_1$ and $\lambda_2$, then $M_1\cong M_2$ if and only if there are natural numbers $r_1$ and $r_2$ such that $r_1\lambda_1=r_2\lambda_2$. We write $\lambda_1\sim\lambda_2$ in this case.
\end{rem}

\begin{rem}\label{free-abelian}
Since $\Q^{\times}_+$ is a free abelian group (on the set of primes), this is also the case for all its subgroups. In particular, both $S$ and all $H\subset S$ are free abelian groups.
\end{rem}

\begin{prop}
Suppose $\O_a\cong\O_b$. Assume that $H$ is a subgroup of $S_a=S_b$. Then $\BQ(a,H)\cong\BQ(b,H)$.
\end{prop}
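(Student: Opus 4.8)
The plan is to promote the bare isomorphism $\O_a\cong\O_b$ to an \emph{equivariant} isomorphism of the two $ax+b$-dynamical systems, and then invoke the standard fact that an equivariant isomorphism of $C^*$-dynamical systems induces an isomorphism of the associated crossed products. First I would call on the corollary of \propref{omega-isomorphism}: from $\O_a\cong\O_b$ we get $S_a=S_b$, so that the fixed subgroup $H\subset S_a=S_b$ acts by multiplication on $N_a,\O_a$ and on $N_b,\O_b$ simultaneously, and we obtain a topological group isomorphism $\varphi\colon\O_a\to\O_b$ which restricts on $N_a$ to the scaling $r\mapsto\tfrac{p}{q}r$ with $\tfrac{p}{q}N_a=N_b$, where $p,q$ are the natural numbers produced in the proof of \propref{omega-isomorphism}.

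Next I would define $\Phi\colon G_a\to G_b$ by $\Phi(n,h)=(\tfrac{p}{q}n,h)$. Using the semidirect-product law $(n_1,h_1)(n_2,h_2)=(n_1+h_1n_2,h_1h_2)$ and the fact that the scalar $\tfrac{p}{q}$ commutes with each $h\in H$ in $\Q$, a one-line computation shows $\Phi$ is a group isomorphism (with inverse $(m,h)\mapsto(\tfrac{q}{p}m,h)$). The crux is then to verify that $(\varphi,\Phi)$ is equivariant for the point actions $(n,h)\cdot x=hx+n$, i.e.\ $\varphi(g\cdot x)=\Phi(g)\cdot\varphi(x)$ for $g=(n,h)\in G_a$ and $x\in\O_a$. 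On the dense subgroup $N_a$ this is the identity $\tfrac{p}{q}(hx+n)=h\bigl(\tfrac{p}{q}x\bigr)+\tfrac{p}{q}n$ in $\Q$; to extend it to all of $\O_a$ one notes that both $x\mapsto\varphi(h\cdot x)$ and $x\mapsto h\cdot\varphi(x)$ are continuous, multiplication by $h$ on $\O_b$ being the continuous extension of multiplication by $h$ on $N_b$ (here one uses $h\in S_b$), so the two maps agree on the dense set $N_a$ and hence everywhere.

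Finally, the equivariant isomorphism $(\varphi,\Phi)$ induces a $*$-isomorphism at the level of convolution algebras, for instance $\Psi\colon C_c(G_a,C_0(\O_a))\to C_c(G_b,C_0(\O_b))$ given by $\Psi(F)(k)(y)=F(\Phi\inv(k))(\varphi\inv(y))$ (in the spirit of the explicit formula used for the analogous statement following \lemref{dense}), which extends to the desired isomorphism $\bar{\QQ}(a,H)=C_0(\O_a)\rtimes_{\axb}G_a\cong C_0(\O_b)\rtimes_{\axb}G_b=\bar{\QQ}(b,H)$. Because $G_a$ and $G_b$ are discrete, no Haar-measure or modular-function corrections intervene, so $\Psi$ respects convolution and involution directly.

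I expect the main obstacle to be precisely the equivariance check in the second paragraph: confirming that $\varphi$, which a priori is only an isomorphism of \emph{additive} topological groups, in fact also intertwines the \emph{multiplicative} $H$-actions. This is exactly where the hypotheses do their work — density of $N_a$ in $\O_a$, continuity of $\varphi$ and of the multiplicative action, and the equality $S_a=S_b$ ensuring that one and the same $h$ acts continuously on both completions.
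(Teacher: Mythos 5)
Your proposal is correct and takes essentially the same route as the paper's own proof: both invoke the corollary of \propref{omega-isomorphism} to obtain an isomorphism $\O_a\to\O_b$ restricting to an isomorphism $N_a\to N_b$ (which is necessarily multiplication by $\tfrac{p}{q}$ on $N_a$), and then transport the system through the same convolution-algebra formula $f\mapsto f\circ(\Phi\inv,\varphi\inv)$. The $H$-equivariance check you identify as the crux is exactly the step the paper leaves implicit, so your write-up simply makes its one-line proof explicit.
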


\begin{proof}
Since $\O_a\cong\O_b$, there exists an isomorphism $\omega\colon\O_a\to\O_b$ restricting to an isomorphism $N_a\to N_b$. Then the map $\varphi\colon C_c(N_a\rtimes H,C_0(\O_a))\to C_c(N_b\rtimes H,C_0(\O_b))$ given by
\[
\varphi(f)(n,h)(x)=f(\O\inv(n),h)(\O\inv(x))
\]
determines the isomorphism $\BQ(a,H)\cong\BQ(b,H)$.
\end{proof}

For two pairs of supernatural numbers $(\lambda_1,\rho_1)$ and $(\lambda_2,\rho_2)$, we write $(\lambda_1,\rho_1)\sim (\lambda_2,\rho_2)$ if there exist natural numbers $p$ and $q$ such that $p\lambda_1=q\lambda_2$ and $q\rho_1=p\rho_2$. Then the set of isomorphism classes of $a$-adic numbers coincides with $\mathbb{S}\times\mathbb{S}/\sim$, and the self-dual ones coincide with the diagonal, i.e., are of the form $[(\lambda,\lambda)]$. Moreover, the pair $([(\lambda,\rho)],H)$, where $H$ is a nontrivial subgroup of $S=\langle \lambda\inv(\infty)\cap\rho\inv(\infty)\rangle$, is an isomorphism invariant for the $a$-adic algebra $\BQ(a,H)$.

\begin{lem}
For all $H\subset S$ and rational numbers $r$ we have
\[
C_0(\R)\rtimes (N\rtimes H)\cong C_0(\R)\rtimes (rN\rtimes H).
\]
\end{lem}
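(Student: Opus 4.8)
The plan is to exhibit the two systems as isomorphic via a single homeomorphism of $\R$ together with a group isomorphism, and then invoke functoriality of the crossed product. Throughout I assume $r\neq 0$ (if $r=0$ then $rN\rtimes H=H$ and the statement fails), and since $rN=|r|\,N$ as subsets of $\Q$ we may as well take $r>0$. First I would record that both semidirect products are genuinely defined: because $H\subset S$ we have $hN=N$ for every $h\in H$, whence $h(rN)=r(hN)=rN$, so $H$ acts on $rN$ by multiplication exactly as it does on $N$. I would then introduce the multiplication map $m_r\colon\R\to\R$, $m_r(t)=rt$, which is a homeomorphism because $r\neq 0$, together with the group isomorphism $\theta\colon N\rtimes H\to rN\rtimes H$, $\theta(n,h)=(rn,h)$. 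Using $(n_1,h_1)(n_2,h_2)=(n_1+h_1n_2,h_1h_2)$ one checks in one line that $\theta$ is a homomorphism, and it is bijective since $n\mapsto rn$ is a bijection $N\to rN$.

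The key step is the equivariance of $m_r$: for $(n,h)\in N\rtimes H$ and $t\in\R$ the two $ax+b$-actions satisfy
\[
m_r\bigl((n,h)\cdot t\bigr)=m_r(n+ht)=rn+h(rt)=\theta(n,h)\cdot m_r(t),
\]
so the pair $(m_r,\theta)$ is an isomorphism of the topological transformation systems $(\R,N\rtimes H)$ and $(\R,rN\rtimes H)$. Dually this says that $\Phi=m_r^*\colon C_0(\R)\to C_0(\R)$, given by $\Phi(f)(t)=f(rt)$, intertwines the two actions through $\theta$, namely $\Phi\circ\axb_{\theta(n,h)}=\axb_{(n,h)}\circ\Phi$; this is again immediate from $\axb_{(m,h)}(f)(t)=f(h^{-1}(t-m))$, since $h^{-1}$ and $r$ commute as scalar multiplications on $\R$.

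Finally I would conclude by the standard fact that an isomorphism of $C^*$-dynamical systems induces an isomorphism of the associated crossed products, yielding
\[
C_0(\R)\rtimes_\axb(N\rtimes H)\cong C_0(\R)\rtimes_\axb(rN\rtimes H).
\]
Mirroring the explicit formulas used earlier (for instance in the proposition following \remref{automorphism}), the isomorphism is the one determined on compactly supported functions by $\varphi(f)(n,h)(t)=f(rn,h)(rt)$ for $f\in C_c(rN\rtimes H,C_0(\R))$ and $(n,h)\in N\rtimes H$. I do not expect any real obstacle: the only points requiring attention are the harmless normalization $r\neq 0$ and the observation that $H\subset S$ is exactly what guarantees $rN$ is $H$-invariant, so that $\theta$ indeed lands in $rN\rtimes H$. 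The remaining content—verifying that $\varphi$ respects convolution and involution—is a routine change-of-variables computation of precisely the type already carried out in the paper, and so I would either relegate it to ``a direct check'' or simply cite the functoriality of the crossed product construction.
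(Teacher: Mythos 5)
Your proposal is correct and is essentially the paper's proof: the paper simply exhibits the same map $\varphi(f)(n,h)(x)=f(rn,h)(rx)$ on $C_c$-functions and declares it an isomorphism, while you additionally supply the underlying justification (the equivariant pair consisting of the homeomorphism $t\mapsto rt$ of $\R$ and the group isomorphism $(n,h)\mapsto(rn,h)$, whose covariance is exactly your computation $m_r((n,h)\cdot t)=\theta(n,h)\cdot m_r(t)$). Your attention to the normalizations $r\neq 0$ and to the $H$-invariance of $rN$ via $H\subset S$ is a harmless and welcome elaboration of details the paper leaves implicit.
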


\begin{proof}
The isomorphism is determined by the map
\[
\varphi\colon C_c(rN\rtimes H,C_0(\R))\to C_c(N\rtimes H,C_0(\R))
\]
given by
\[
\varphi(f)(n,h)(x)=f(rn,h)(rx).
\qedhere
\]
\end{proof}

\begin{rem}
Let $a$ be a sequence, and $N$ the associated $a$-adic rationals. Let $M$ be a subgroup of $N$ that is dense in $\O$. Let $N^{(m)}$ be the group of rationals corresponding to $a^{(m)}$. Let $a'$ be another sequence, with associated group $\O'$ and associated rationals $N'$. Suppose $\O\cong\O'$. Then
\[
\begin{split}
C_0(\R)\rtimes (N^*\rtimes H) &\cong C_0(\R)\rtimes (M^*\rtimes H)\\
C_0(\R)\rtimes (N^*\rtimes H) &\cong C_0(\R)\rtimes (N^{(m)}\rtimes H)\\
C_0(\R)\rtimes (N^*\rtimes H) &\cong C_0(\R)\rtimes ((N')^*\rtimes H).
\end{split}
\]
\end{rem}

Hence, $([\lambda],H)$, where $H$ is a nontrivial subgroup of $\langle \lambda\inv(\infty)\cap\rho\inv(\infty)\rangle$, is an isomorphism invariant for the right hand side of the $a$-adic duality theorem (\thmref{a-adic-duality}). Moreover, by \thmref{a-adic-duality} and \remref{zhang}, it should be clear that $\BQ(a,H)\cong\BQ(b,K)$ if $N_a^*\cong N_b^*$ and $H=K$, although the isomorphism is in general not canonical. Therefore, $([\lambda],H)$ is an isomorphism invariant also for $\BQ(a,H)$.

\begin{ex}
Let $a$ and $b$ be the sequences of Examples~\ref{three-at-zero} and~\ref{three-at-negative}, and let $H=\langle 2\rangle$. Then $\BQ(a,H)\cong\BQ(b,H)$ and these algebras are also isomorphic to $\BQ_2$, but this isomorphisms are not canonical.
\end{ex}

\begin{q}\label{k-theory}
Given two sequences $a$ and $b$ and subgroups $H\subset S_a$ and $K\subset S_b$. When is $\BQ(a,H)\not\cong\BQ(b,K)$? To enlighten the question, consider the following situation. Let $a=(\cdots,n,n,n,\dotsc)$ and $H=\langle n\rangle$, and note that $H=S$ if and only if $n$ is prime (see \exref{singly-generated} above). Then $\BQ(a,H)$ turns out to be isomorphic to the stabilization $\OO(E_{n,1})$ of \cite[Example~A.6]{Katsura}. 
Thus,
\[
(K_0(\QQ(a,H)),[1],K_1(\QQ(a,H)))\cong(\Z\oplus\Z/(n-1)\Z,(0,1),\Z).
\]
Moreover, since all $\BQ(a,H)$ are Kirchberg algebras in the UCT class, they are classifiable by $K$-theory.

In future work we hope to be able to compute the $K$-theory of $\BQ(a,H)$ using the following strategy: Since $C_0(\O)\rtimes N$ is stably isomorphic to the Bunce-Deddens algebra $C(\Delta)\rtimes\Z$, its $K$-theory is well-known, in fact
\begin{equation}\label{bunce-deddens}
(K_0(C(\Delta)\rtimes\Z),[1],K_1(C(\Delta)\rtimes\Z)))\cong(N^*,1,\Z).
\end{equation}
As $H$ is a free abelian group, we can apply the Pimsner-Voiculescu six-term exact sequence iteratively by adding the action of one generator of $H$ at a time. For this to work out, we will need to apply \thmref{a-adic-duality} and use homotopy arguments on the real dynamics to compute the action of $H$ on the $K$-groups (see also \cite[Remark~3.16]{CLintegral2}).
\end{q}

\begin{rem}
It is possible to compute the topological $K$-theory (in terms of complex vector bundles) of the solenoids by applying \eqref{bunce-deddens}:
\[
\begin{split}
K_i(C(\Delta)\rtimes\Z) & \cong K_i(C_0(\O)\rtimes N) \overset{\text{Rem.\ref{empty-P}}}{\cong} K_i(C_0(\R)\rtimes N^*)\\ 
&\overset{\text{Cor.\ref{C0 subgroup}}}{\cong} K_i(C(\nshat)\rtimes\what{\R})\overset{\text{Thom}}{\cong} K_{1-i}(C(\nshat))\cong K_{\textup{top}}^{1-i}(\nshat).
\end{split}
\]
That is, if $N$ is any subgroup of $\Q$, then
\[
K_{\textup{top}}^i(\nhat)=\begin{cases}\Z &\text{if } i=0 \\ N &\text{if } i=1.\end{cases}
\]
\end{rem}

\subsection{\texorpdfstring{The ring structure of $\O$}{The ring structure of Omega}}

\begin{thm}[Herman, see {\cite[12.3.35]{Palmer2}}]\label{herman-thm} The $a$-adic numbers $\O$ can be given the structure of a topological commutative ring with multiplication inherited from $N\subset\Q$ if and only if
\begin{equation}\label{herman}
N=\bigcup_{h\in S}h\Z\quad \left(=\Z\left[\left\{\tfrac{1}{p}:p\in P\right\}\right]\right).
\end{equation}
\end{thm}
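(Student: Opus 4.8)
The plan is to realize $\O$ as the completion of the dense subgroup $N$ and to decide when the multiplication that $N$ carries as a subset of $\Q$ extends to a jointly continuous multiplication on $\O$. First I reformulate \eqref{herman}. Recall from the definitions of $\lambda,\rho$ and $P$ that $p^{-1}\in N$ if and only if $\rho(p)>0$ (a prime $p$ divides some $a_{-1}\cdots a_{-k}$ precisely when $p^{-1}\in N$), and that $p\in P$ if and only if $\lambda(p)=\rho(p)=\infty$. A short check using the structure of the subrings of $\Q$ (they are exactly the localizations $\Z[\{1/p:p\in T\}]$) shows that \eqref{herman} is equivalent to the conjunction of
\begin{enumerate}
\item[(i)] $\rho(p)\in\{0,\infty\}$ for every prime $p$ --- equivalently, $N$ is a subring of $\Q$ --- and
\item[(ii)] $\lambda(p)=\infty$ whenever $\rho(p)>0$.
\end{enumerate}
So it suffices to prove that $\O$ admits the desired ring structure if and only if (i) and (ii) hold.

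For necessity, suppose $\O$ is a topological ring whose multiplication restricts to that of $\Q$ on $N$. Since this restriction must be a well-defined binary operation on $N$ valued in $N\subset\O$, the group $N$ is closed under multiplication, i.e.\ a subring of $\Q$; this is exactly (i). For (ii), fix a prime $p$ with $\rho(p)>0$, so $p^{-1}\in N$. Joint continuity of multiplication forces the single map $x\mapsto p^{-1}x$ to be continuous on $\O$, hence on the dense subgroup $N$ with its $\UU$-topology. Using $U_m=a_0\cdots a_{m-1}\Z$, continuity at $0$ means that for every $l$ there is $m$ with $p^{-1}U_m\subseteq U_l$; comparing $p$-adic valuations $v_p$, this reads $\sum_{i=l}^{m-1}v_p(a_i)\ge 1$, i.e.\ $p\mid a_i$ for some $i\ge l$, and its holding for every $l$ is precisely $\lambda(p)=\infty$.

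For sufficiency, assume (i) and (ii). By (i) the multiplication $N\times N\to N$ is defined, and I want to extend it to $\O\times\O\to\O$. The crucial estimate is that for each $j\ge 0$ and each $l\ge 0$ there is an $m$ with
\[
(\OO_{-j}\cap N)\cdot U_m\subseteq U_l .
\]
Indeed $\OO_{-j}\cap N=\tfrac{1}{a_{-1}\cdots a_{-j}}\Z$, so at the level of valuations the inclusion reads $\sum_{i=l}^{m-1}v_p(a_i)\ge v_p(a_{-1}\cdots a_{-j})$ for every prime $p$; only the finitely many primes dividing $a_{-1}\cdots a_{-j}$ impose a constraint, and each of these has $\rho(p)>0$, so (ii) gives $\lambda(p)=\infty$, hence $\sum_{i\ge l}v_p(a_i)=\infty$, and a large enough $m$ works simultaneously for all of them. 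Granting this, on each compact-open $\OO_{-j}$ the multiplication is \emph{uniformly} continuous on $(\OO_{-j}\cap N)\times(\OO_{-j}\cap N)$: for $x,x',y,y'$ there with $x-x',y-y'\in U_m$ one has $xy-x'y'=x(y-y')+(x-x')y'\in U_l$ by the estimate. It therefore extends continuously to $\OO_{-j}\times\OO_{-j}\to\O$; these extensions agree on overlaps, since they all restrict to the given product on $N$, and glue, because $\O=\bigcup_j\OO_{-j}$, to a continuous multiplication $\O\times\O\to\O$. The ring axioms pass from $N$ to $\O$ by density and continuity, so $\O$ is a topological commutative ring with multiplication inherited from $N$.

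The main obstacle is the sufficiency step, and in particular the fact that multiplication is \emph{not} uniformly continuous on all of $N\times N$ (multiplying a small element by an unbounded one need not be small), so one cannot extend it in a single stroke. The role of condition (ii) is exactly to restore uniform continuity once one factor is confined to a bounded --- hence compact-open --- piece $\OO_{-j}$: it guarantees that the denominators $a_{-1}\cdots a_{-j}$ arising from that piece are absorbed by sufficiently deep tails $a_l\cdots a_{m-1}$ of the positive part of the sequence, which is precisely the valuation inequality above. Condition (i) is what makes the product land back in $N$, and hence in $\O$, in the first place. Verifying that inequality uniformly over the finitely many relevant primes, and checking that the locally defined extensions are mutually compatible, are the only genuinely technical points.
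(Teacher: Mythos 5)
The paper never proves this statement: it is quoted verbatim from Palmer's book, where it is attributed to E.~Herman, and is used as a black box, so there is no internal proof to compare yours against. Read on its own, your argument is correct and essentially complete. The reduction of \eqref{herman} to the two conditions (i) $\rho(p)\in\{0,\infty\}$ for all $p$ and (ii) $\rho(p)>0\Rightarrow\lambda(p)=\infty$ is right (using that $p\in P$ exactly when $\lambda(p)=\rho(p)=\infty$); the necessity direction correctly isolates continuity at $0$ of $x\mapsto p^{-1}x$ on $N$ as the constraint, and the computation $p^{-1}U_m\subseteq U_l \iff p\mid a_l\dotsm a_{m-1}$ is what makes it work; and the sufficiency direction --- uniform continuity of multiplication on $(\OO_{-j}\cap N)\times(\OO_{-j}\cap N)$ via the valuation estimate, extension to $\OO_{-j}\times\OO_{-j}$ by completeness, and glueing over the nested open cover --- is sound, since indeed $\OO_{-j}\cap N=\tfrac{1}{a_{-1}\dotsm a_{-j}}\Z$ and $\Omega=\bigcup_j\OO_{-j}$. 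Two points you leave tacit and should state explicitly: first, you interpret ``multiplication inherited from $N\subset\Q$'' as requiring that the $\Omega$-product of two elements of $\iota(N)$ be $\iota$ of their rational product, hence in particular $N\cdot N\subseteq N$; this is the intended reading and is exactly what makes necessity of (i) immediate, but under a weaker reading that step would need an argument. Second, in the necessity of (ii) you need $p^{-1}N\subseteq N$ in order to identify the restriction to $\iota(N)$ of multiplication by $\iota(p^{-1})$ in $\Omega$ with the rational map $x\mapsto p^{-1}x$; this follows from (i), which you prove first, so the order of your two steps matters and is correct as written. Neither point affects correctness.
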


Again, by the $a$-adic duality theorem (\thmref{a-adic-duality}) and Zhang's dichotomy (\remref{zhang}):

\begin{cor}
For every sequence $a$, there is a sequence $b$ such that $\O_b$ is a ring and $\BQ(a,H)\cong\BQ(b,H)$.
\end{cor}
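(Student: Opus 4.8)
The plan is to use the $a$-adic duality theorem together with Zhang's dichotomy to reduce the statement to a purely group-theoretic question about subgroups of $\Q$, and then to build $b$ by prescribing its associated pair of supernatural numbers. First I would record the reduction: by \thmref{a-adic-duality} and \remref{zhang} the Morita equivalence there is an isomorphism, so
\[
\bar{\QQ}(a,H)=C_0(\O_a)\rtimes_\axb(N_a\rtimes H)\cong C_0(\R)\rtimes_\axb(N_a^*\rtimes H),
\]
and likewise for any $b$. Thus it suffices to produce a sequence $b$ with $\O_b$ a ring, with $S_b=S_a$ (so that the same $H$ acts), and with $N_b^*\cong N_a^*$ as subgroups of $\Q$; for then $N_b^*=rN_a^*$ for some positive rational $r$ by \remref{q-isomorphism}, and the lemma giving $C_0(\R)\rtimes(N\rtimes H)\cong C_0(\R)\rtimes(rN\rtimes H)$ for $H\subset S$ and rational $r$ yields $\bar{\QQ}(a,H)\cong\bar{\QQ}(b,H)$.

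Next I would translate these three requirements into the supernatural-number bookkeeping of this section. Write $(\lambda_a,\rho_a)$ for the supernatural numbers attached to $a$, so that $S_a=\langle P_a\rangle$ with $P_a=\lambda_a^{-1}(\infty)\cap\rho_a^{-1}(\infty)$, and recall that the subgroup $N_a^*=N_{a^*}$ has associated supernatural number $\rho_{a^*}$, which is $\sim\lambda_a$ since $\lambda_a=a_0\rho_{a^*}$. Hence $N_b^*\cong N_a^*$ amounts to $\lambda_b\sim\lambda_a$, while $S_b=S_a$ amounts to $P_b=P_a$. Assuming, as we must for $\bar{\QQ}(a,H)$ to be defined, that $P_a\neq\varnothing$, I would then define $b$ by the pair
\[
\lambda_b=\lambda_a,\qquad \rho_b(p)=\begin{cases}\infty & p\in P_a,\\ 0 & p\notin P_a.\end{cases}
\]
This is a legitimate pair of supernatural numbers, since $P_a\neq\varnothing$ forces $\sum_p\rho_b(p)=\infty$, and by the classification of $\O$ (the correspondence between pairs of supernatural numbers and sequences up to $\sim$) such a $b$ exists.

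Finally I would verify the three conditions for this $b$. Because $\rho_b$ takes only the values $0$ and $\infty$ with $\rho_b^{-1}(\infty)=P_a\subset\lambda_a^{-1}(\infty)=\lambda_b^{-1}(\infty)$, we get $P_b=\lambda_b^{-1}(\infty)\cap\rho_b^{-1}(\infty)=P_a$, so $S_b=S_a$; and $\lambda_b=\lambda_a$ gives $N_b^*\cong N_a^*$. For the ring property, $\rho_b^{-1}(\infty)=P_a=P_b$ says precisely that $N_b=\Z[\{1/p:p\in P_b\}]=\bigcup_{h\in S_b}h\Z$, which is Herman's criterion \eqref{herman}, so by \thmref{herman-thm} the group $\O_b$ is a ring. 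Combining with the reduction of the first paragraph completes the proof.

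The one point that needs genuine care, and what I would regard as the main obstacle, is checking that replacing $\rho_a$ by $\rho_b$ does not disturb $P$: this is exactly the inclusion $P_a\subset\lambda_a^{-1}(\infty)$, which holds by the very definition of $P_a$, and it is what allows one to force the ring condition while simultaneously keeping both $S$ and the isomorphism class of $N^*$ — the only data on which $\bar{\QQ}(a,H)$ depends, after duality — fixed.
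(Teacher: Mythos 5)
Your proof is correct and follows essentially the same route as the paper: the identical construction $\lambda_b=\lambda_a$, $\rho_b=\infty$ on $P_a$ and $0$ elsewhere, the ring property via \thmref{herman-thm}, and the transfer of the isomorphism via \thmref{a-adic-duality} plus Zhang's dichotomy. Your treatment is in fact slightly more careful than the paper's, since you only claim $N_b^*\cong N_a^*$ (rather than equality) and patch the discrepancy with the rational-scaling lemma for $C_0(\R)\rtimes(N\rtimes H)$, which handles the ambiguity in choosing a representative sequence $b$ for the pair $(\lambda_b,\rho_b)$.
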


Note that $\BQ(b,H)$ is still not a ring algebra in the sense of \cite{Li-Ring}.

\begin{proof}
Given $a$, we can find another sequence $b$ such that $S_a=S_b$, $N_a^*=N_b^*$ and $\O_b$ is a ring. More precisely, $b$ can be constructed by setting $\lambda_b=\lambda_a$ and defining $\rho_b$ by $\rho_b(p)=\infty$ if $p\in S_a$ and $\rho_b(p)=0$ if $p\notin S_a$. Then $\O_b$ is a ring by \thmref{herman-thm}. By \thmref{a-adic-duality} and \remref{zhang} (see also the comment above) we have $\BQ(a,H)\cong\BQ(b,H)$.
\end{proof}

\begin{lem}
If $\O_a$ and $\O_b$ are both rings, then they are isomorphic as topological rings if and only if $a\sim b$.
\end{lem}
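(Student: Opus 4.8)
The plan is to prove both implications, the backward one being immediate and the forward one reducing to the characterization of $\sim$ already recorded in the Notation following \lemref{open-subgroups}.

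For $a\sim b\Rightarrow\O_a\cong\O_b$ as topological rings, I would first note that $a\sim b$ means $\UU_a=\UU_b$, whence $N_a=N_b$ as subgroups of $\Q$; since both $\O_a$ and $\O_b$ are rings, \thmref{herman-thm} identifies this common group with the subring $\Z[\{\tfrac1p:p\in P\}]$ of $\Q$, where $P$ depends only on $S=\langle P\rangle$ and hence only on $\UU$. Because $\O_a$ and $\O_b$ are the completions of the \emph{same} topological ring $N$ with respect to the \emph{same} topology, the identity on $N$ extends to a topological group isomorphism $\O_a\to\O_b$; and as the multiplication on each side is the unique continuous extension of multiplication on the dense subring $N$, this extension is automatically multiplicative, hence a topological ring isomorphism.

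For the converse I would start from a topological ring isomorphism $\varphi\colon\O_a\to\O_b$ and argue that it must carry $\Delta_a$ onto $\Delta_b$. The point is that, as an isomorphism of unital rings, $\varphi$ preserves the multiplicative identity, so $\varphi(1)=1$; being additive it then sends the copy of $\Z=\Z\cdot1$ in $\O_a$ onto the copy of $\Z=\Z\cdot1$ in $\O_b$; and being a homeomorphism it preserves closures, so, using the identification $\Delta=\overline{\Z}$ from the first section,
\[
\varphi(\Delta_a)=\varphi\bigl(\overline{\Z}\bigr)=\overline{\varphi(\Z)}=\overline{\Z}=\Delta_b.
\]
Thus $\varphi$ is a topological group isomorphism $\O_a\to\O_b$ restricting to an isomorphism $\Delta_a\to\Delta_b$, and by the cited characterization of $\sim$ this says precisely that $a\sim b$.

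The hard part---really the only place where the ring structure, rather than merely the additive group structure, is used---is the step $\varphi(1)=1$. This is essential: \propref{omega-isomorphism} shows that a generic additive isomorphism $\O_a\to\O_b$ is induced by a scaling $r\mapsto\tfrac pq r$ on $N$, which in general neither fixes $1$ nor preserves $\Delta$, so without the multiplicative constraint the forward implication would fail. Once preservation of the unit is in hand, everything else is formal and the conclusion drops out of the group-level description of $\sim$; I expect no further obstacles.
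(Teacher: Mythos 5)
Your proof is correct; in fact the paper states this lemma without any proof at all, so your argument supplies exactly what is missing rather than duplicating or diverging from an existing one. Both directions are sound: for the forward implication, $\O_a$ and $\O_b$ are Hausdorff completions of the same topological group $N$ (since $\UU_a=\UU_b$ forces $N_a=N_b$), and multiplication on each side is the unique continuous extension of multiplication on the dense subring $N$, so the extension of $\id_N$ is automatically multiplicative; for the converse, you correctly isolate $\varphi(1)=1$ as the only point where the ring structure enters, after which $\varphi(\Delta_a)=\overline{\varphi(\Z)}=\overline{\Z}=\Delta_b$, and the characterization of $a\sim b$ recorded in the paper's Notation (existence of an isomorphism $\O_a\to\O_b$ restricting to $\Delta_a\to\Delta_b$) finishes the argument, with your closing remark about \propref{omega-isomorphism} accurately explaining why the multiplicative constraint cannot be dropped.
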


\begin{rem}
Suppose $\O$ is a ring. Then $\O$ is an integral domain if and only if there is a prime $p$ such that $a_i$ is a power of $p$ for all $i$. In this case $\O$ is actually the field $\Q_p$.

Finally, $\O$ is both a ring and self-dual precisely when $\lambda(p)=\rho(p)=0$ or $\infty$ for all primes $p$. In this case, $\O$ is completely determined by the set of primes $P$.
\end{rem}

\begin{rem}\label{inverses}
Set $\UU_P=\{\tfrac{m}{n}\Z\in\UU : n\in S\}=\{U\in\UU:U\subset\Z[\{\tfrac{1}{p}:p\in P\}]\}$. Then the open subgroup
\[
R=\overline{\Z[\{\tfrac{1}{p}:p\in P\}]}=\overline{\bigcup_{U\in\UU_P}U} 
\]
in $\O$ is the maximal open (and closed) ring contained in $\O$. Indeed, by \thmref{herman-thm}, $R$ is a ring contained in $\O$. Moreover, every open ring in $\O$ must be of the form described in \lemref{open-subgroups}. We recall from \remref{automorphism} that multiplication with $\tfrac{1}{p}\in N$ in an open subgroup of $\O$ is well-defined only if it contains $\Z[\tfrac{1}{p}]$ and continuous only if $p\in P$. Hence, every open ring in $\O$ must be a subring of $R$.

Furthermore, every automorphism of $\O$ is completely determined by its value at $1$. Hence, $\aut{(\O)}$ is the subgroup of the multiplicative group $R^{\times}$ of $R$ consisting of all $x\in R^{\times}$ that has a ``unique inverse'' in $\O$. To illustrate what this means, consider \exref{three-at-negative} again and let 
\[
x=(\dotsc,0,1,1,0,1,0,1,\dotsc),
\]
where the first nonzero entry is $x_0$, and let $y$ be given by $y_{-1}=1$ and $y_i=0$ else. In this case $R=\overline{\Z[\tfrac{1}{2}]}$ and $3\in R^{\times}$, $x\in R^{\times}$ and $3x=1$. However, since $3y=1$ as well, $3\notin\aut{(\O)}$. Following the notation of \remref{automorphism}, $\{\pm r:r\in \langle P\cup Q\rangle\}$ is a subgroup of $\aut{(\O)}$.
\end{rem}

\begin{appendix}

\section{\texorpdfstring{Cuntz-Li's ``subgroup of dual group theorem''}{Cuntz-Li's "subgroup of dual group theorem"}}\label{appendix}

Our aim in this appendix is to show that \cite[Lemma~4.3]{CLintegral2} is a special case of the following result about coactions, which is probably folklore. First observe that if $\phi\colon H\to G$ is a continuous homomorphism of locally compact groups, then $\lt\circ\phi\colon H\to \aut C_0(G)$ is an action of $H$ on $C_0(G)$ and $\overline{\id\otimes\pi_\phi}\circ\delta_H$ is a coaction of $G$ on $C^*(H)$, where
\[
\pi_\phi\colon C^*(H)\to M(C^*(G))
\]
is the integrated form of $\phi$. The only property of coactions that is perhaps not obvious is injectivity, but this follows by computing that
\[
\overline{\id\otimes\pi_{1_G}}\circ\overline{\id\otimes\pi_\phi}\circ\delta_H
=\overline{\id\otimes \pi_{1_H}}\circ\delta_H=\id_{C^*(H)},
\]
where $1_G$ denotes the trivial character of $G$ and $\pi_{1_G}\colon C^*(G)\to \C$ denotes the integrated form (and similarly for $\pi_{1_H}$).

\begin{thm}\label{abstract subgroup}
Let $\phi\colon H\to G$ be a continuous homomorphism of locally compact groups, and let
\[
\epsilon=\lt\circ\phi\midtext{and}\delta=\overline{\id\otimes\pi_\phi}\circ\delta_H
\]
be the associated action of $H$ on $C_0(G)$ and coaction of $G$ on $C^*(H)$, respectively. Then there is an isomorphism
\[
\theta\colon C_0(G)\rtimes_\epsilon H \iso C^*(H)\rtimes_\delta G
\]
such that
\[
\overline\theta\circ i_{C_0(G)}=j_G
\midtext{and}
\overline\theta\circ i_H=j_{C^*(H)}.
\]
\end{thm}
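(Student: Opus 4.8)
The plan is to identify both crossed products as completions of the same convolution-type $*$-algebra and show that the canonical embeddings match up, essentially by unwinding the definitions of an ordinary crossed product and a coaction crossed product and invoking Landstad-type duality (or, more concretely, Takai/Katayama duality). The key observation driving everything is that $\delta=\bar{\id\otimes\pi_\phi}\circ\delta_H$ is built from the comultiplication on $C^*(H)$ pushed forward along $\phi$, so a coaction crossed product $C^*(H)\rtimes_\delta G$ should record exactly the data of functions on $G$ together with the translation action coming from $\phi$—which is precisely what $C_0(G)\rtimes_\epsilon H$ encodes on the other side.

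\textbf{Step 1: reduce to abelian-style Fourier duality as a sanity check.} Before the general argument, I would verify the statement when $G$ and $H$ are abelian, where $\widehat G$ makes sense and the coaction $\delta$ is dual to an honest action of $\widehat G$. In that case $C^*(H)\cong C_0(\widehat H)$, the homomorphism $\phi:H\to G$ dualizes to $\hat\phi:\widehat G\to\widehat H$, and the claim becomes an equivariant statement relating $C_0(G)\rtimes_\epsilon H$ to $C_0(\widehat H)\rtimes \widehat G$ via the Fourier transforms $\FF_G$ and $\FF_H$. This is the concrete incarnation used in Steps 4--6 of the proof of \thmref{a-adic-duality}, and it tells me what the isomorphism $\theta$ must do on generators: send $i_{C_0(G)}$ to $j_G$ (the ``spatial'' copy of $C_0(G)=\widehat{C^*(G)}$ sitting inside the coaction crossed product) and send $i_H$ to $j_{C^*(H)}$.

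\textbf{Step 2: construct $\theta$ on the dense $*$-subalgebras.} In general I would define $\theta$ directly on $C_c(H,C_0(G))$ by prescribing its behaviour on the two families of generators, namely
\[
\bar\theta\circ i_{C_0(G)}=j_G
\midtext{and}
\bar\theta\circ i_H=j_{C^*(H)},
\]
and then check that the covariance relation for the pair $(\epsilon,H)$ on the left is transformed into the covariance relation defining $C^*(H)\rtimes_\delta G$ on the right. The heart of this is a compatibility computation: the action $\epsilon=\lt\circ\phi$ must intertwine with the coaction $\delta$ in exactly the way that makes $(j_{C^*(H)},j_G)$ a covariant pair, and this in turn follows from the identity $\delta=\bar{\id\otimes\pi_\phi}\circ\delta_H$ together with the definition of $\pi_\phi$ as the integrated form of $\phi$. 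I would then argue that $\theta$ extends to the full crossed products by universality, producing an inverse the same way, so that $\theta$ is an isomorphism rather than merely a surjection.

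\textbf{The main obstacle} I expect is bookkeeping at the level of multiplier algebras and nondegeneracy: $\pi_\phi$ lands in $M(C^*(G))$, so $\delta$ takes values in $M(C^*(H)\otimes C^*(G))$, and verifying that $\delta$ is genuinely a (nondegenerate, injective) coaction—and that the covariant homomorphisms integrate to honest, rather than merely formal, maps—requires care with strict topologies and approximate identities. The injectivity of $\delta$ is already disposed of in the excerpt via the computation with trivial characters, so the remaining subtlety is purely the covariance-and-extension argument; once the generator relations are matched and nondegeneracy is checked, the universal properties of the two crossed products force $\theta$ and its inverse into existence, giving the stated isomorphism carrying $i_{C_0(G)}$ to $j_G$ and $i_H$ to $j_{C^*(H)}$.
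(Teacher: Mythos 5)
Your proposal takes essentially the same route as the paper: the paper's proof consists precisely of showing that a pair $(\mu,\pi)$ is covariant for the action $(C_0(G),H,\epsilon)$ if and only if the swapped pair $(\pi,\mu)$ is covariant for the coaction $(C^*(H),G,\delta)$, after which the isomorphism $\theta$ and its inverse both come from the two universal properties with the generators matching as stated --- exactly your Step 2. The one thing you leave implicit is the actual verification of that covariance equivalence, which the paper carries out by slicing against elements of $B(G)$ (and, for the converse direction, elements of $A(G)$, using density of $A(G)$ in $C_0(G)$) to reduce everything to manipulations of the fundamental unitary $w_G$.
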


\begin{proof}
It suffices to show that, given nondegenerate homomorphisms
\[
\mu\colon C_0(G)\to M(D)
\midtext{and}
\pi\colon C^*(H)\to M(D),
\]
the pair $(\mu,\pi)$ is covariant for the action $(C_0(G),H,\epsilon)$ if and only if the pair $(\pi,\mu)$ is covariant for the coaction $(C^*(H),G,\delta)$. First assume that $(\mu,\pi)$ is covariant, i.e.,
\[
\mu\circ \epsilon_t=\ad \overline \pi_t\circ\mu\quad\text{for $t\in H$.}
\]
We must show that for $t\in H$ we have
\[
\ad \overline{\mu\otimes\id}(w_G)(\overline\pi_t\otimes 1)=\overline{\pi\otimes\id}\circ\overline\delta(t),
\]
where $w_G\in C_b^\beta(G,M(C^*(G)))=M(C_0(G)\otimes C^*(G))$ denotes the canonical embedding of $G$ into $UM(C^*(G))$ (and ``$C_b^\beta$'' denotes norm-bounded functions that are strictly continuous),  or equivalently
\[
\overline{\mu\otimes\id}(w_G)(\overline\pi_t\otimes 1)=\overline{\pi\otimes\id}\circ\overline\delta(t)\overline{\mu\otimes\id}(w_G).
\]
Since the slice maps $\overline{\id\otimes h}$ for $h$ in the Fourier-Stieltjes algebra $B(G)$ separate points in $M(D\otimes C^*(G))$, it suffices to compute that
\begin{align*}
\overline{\id\otimes h}\bigl(\overline{\mu\otimes\id}(w_G)(\overline\pi_t\otimes 1)\bigr)
&=\overline{\id\otimes h}\circ\overline{\mu\otimes\id}(w_G)\overline\pi_t
\\&=\mu(h)\overline\pi_t,
\end{align*}
while
\begin{align*}
\overline{\id\otimes h}\bigl(\overline{\pi\otimes\id}\circ\overline\delta(t)\overline{\mu\otimes\id}(w_G)\bigr)
&=\overline{\id\otimes h}\bigl((\overline\pi_t\otimes \phi(t))\overline{\mu\otimes\id}(w_G)\bigr)
\\&=\overline\pi_t\,\overline{\id\otimes h}\bigl((1\otimes \phi(t))\overline{\mu\otimes\id}(w_G)\bigr)
\\&=\overline\pi_t\,\overline{\id\otimes h}\Bigl(\overline{\mu\otimes\id}\bigl((1\otimes \phi(t))w_G\bigr)\Bigr)
\\&=\overline\pi_t\,\overline{\id\otimes h}\Bigl(\overline{\mu\otimes\id}\bigl(\overline{\lt_{\phi(t)\inv}\otimes\id}(w_G)\bigr)\Bigr)
\\&=\overline\pi_t\,\overline{\id\otimes h}\Bigl(\overline{\mu\otimes\id}\bigl(\overline{\epsilon_{t\inv}\otimes\id}(w_G)\bigr)\Bigr)
\\&=\overline\pi_t\,\mu\circ\epsilon_{t\inv}\bigl(\overline{\id\otimes h}(w_G)\bigr)
\\&=\overline\pi_t\ad \overline\pi_{t\inv}\circ\mu(h)
\\&=\mu(h)\overline\pi_t.
\end{align*}

Conversely, assuming that $(\pi,\mu)$ is a covariant homomorphism of the coaction $(C^*(H),G,\delta)$, we can use much of the above computation, but now with $\omega$ in the Fourier algebra $A(G)$, getting (after replacing $t$ by $t\inv$)
\[
\mu\circ\epsilon_t(\omega)=\ad \overline\pi(t)\circ\mu(\omega),
\]
which implies that $(\mu,\pi)$ is covariant for the action $(C_0(G),H,\epsilon)$ because $A(G)$ is dense in $C_0(G)$.
\end{proof}

Now we want to make the connection with Cuntz-Li's ``subgroup of dual group theorem'' \cite[Lemma~4.3]{CLintegral2}.
So, suppose $G$ is abelian. We will want to work with the dual group of $G$, and to make the closest connection with \cite{CLintegral2} it will be better, for \corref{abelian subgroup} only, to switch the roles of $G$ and $\ghat$: so after this switch we have a continuous homomorphism $\phi\colon H\to \ghat$. Actually, for the Cuntz-Li theorem $\phi$ will be injective, and in \cite{CLintegral2} $H$ is identified with its image in $\ghat$.

\begin{rem}
There is a small difference between \corref{abelian subgroup} and \cite[Lemma~4.3]{CLintegral2}: we require $H$ to have a stronger topology than it inherits from $\ghat$, while \cite{CLintegral2} only requires the topology to make it a locally compact group such that the above $\mu$ and $\nu$ are continuous actions of $G$ and $H$ on the $C^*$-algebras $C^*(H)$ and $C^*(G)$, respectively. As Cuntz and Li mention in \cite{CLintegral2}, they are interested in the case where the topology on $H$ is discrete, so our formulation of the result is sufficient for their purposes.
\end{rem}

To prepare for the formulation of the ``subgroup of dual group theorem'', we briefly recall a bit of the theory of noncommutative duality from \cite[Appendix~A]{enchilada}. By \cite[Example~A.23]{enchilada} there is a bijective correspondence between coactions of $\ghat$ and actions of $G$: given a coaction $\delta$ of $\ghat$ on a $C^*$-algebra $A$,
the associated action $\mu$ of $G$ is given by
\[
\mu_x(a)=\overline{\id\otimes \FF^*(e_x)}\circ\delta,
\]
where $e_x\in C_0(G)^*$ is evaluation at $x$ and $\FF=\FF_{\ghat}\colon C^*(\ghat)\to C_0(G)$ is the Fourier transform, with dual map $\FF^*$. Warning: in \cite[Example~A.23]{enchilada} the convention for the Fourier transform is that $\overline\FF(\chi)(x)=\chi(x)$ for $\chi\in \ghat$ and $x\in G$; consequently, $\FF^*(e_x)$ coincides with the function in the Fourier-Stieltjes algebra $B(\ghat)=C^*(\ghat)^*$ given by the character $x$ of $\ghat$. As explained in \cite[Section~A.5]{enchilada}, the covariant representations of the coaction $(A,\ghat,\delta)$ and the action $(A,G,\mu)$ are the same modulo the isomorphism $\FF_G\colon C^*(G)\to C_0(\ghat)$, so there is an isomorphism
\[
\Upsilon\colon A\rtimes_\delta \ghat\iso A\rtimes_\mu G
\]
such that
\[
\Upsilon\circ j_A=i_A\midtext{and}\Upsilon\circ j_{\ghat}=i_G\circ\FF_{\ghat}\inv.
\]

\begin{cor}[{\cite[Lemma~4.3]{CLintegral2}}]\label{abelian subgroup}
Let $G$ be a locally compact abelian group, and let $H$ be a subgroup of the dual group $\ghat$. Let $H$ have a topology, stronger than the one it inherits from $\ghat$, that makes it a locally compact group. Then there are actions $\nu$ of $H$ on $C^*(G)$ and $\mu$ of $G$ on $C^*(H)$, given by
\begin{equation}\label{H action}
\nu_t(g)(x)=t(x)g(x)\quad\text{for $t\in H$, $g\in C_c(G)\subset C^*(G)$, and $x\in G$,}
\end{equation}
and
\begin{equation}\label{G action}
\mu_x(f)(t)=\overline{t(x)}f(t)\quad\text{for $x\in G$, $f\in C_c(H)\subset C^*(H)$, and $t\in H$,}
\end{equation}
respectively.

Moreover, there is an isomorphism
\[
\sigma\colon C^*(G)\rtimes_\nu H\xrightarrow{\cong} C^*(H)\rtimes_\mu G
\]
such that for $g\in C_c(G)$ and $f\in C_c(H)$ the image $\sigma(i_{C^*(G)}(g)i_H(f))$ coincides with the element of
\[
C_c(G,C_c(H))\subset C_c(G,C^*(H))
\]
given by
\[
\sigma\bigl(i_{C^*(G)}(g)i_H(f)\bigr)(x)(t)=\overline{t(x)}g(x)f(t).
\]
\end{cor}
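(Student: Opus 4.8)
The plan is to deduce Corollary~\ref{abelian subgroup} from Theorem~\ref{abstract subgroup} by passing, on each side, between the action/coaction picture of that theorem and the action/action picture of the corollary via Pontryagin--Fourier duality. First I would apply Theorem~\ref{abstract subgroup} with $\ghat$ in the role of ``$G$'' and with $\phi:H\to\ghat$ the inclusion; this $\phi$ is a continuous homomorphism precisely because $H$ is given a topology finer than the one inherited from $\ghat$. The theorem then yields an isomorphism $\theta:C_0(\ghat)\rtimes_\epsilon H\iso C^*(H)\rtimes_\delta\ghat$, where $\epsilon=\lt\circ\phi$ and $\delta=\bar{\id\otimes\pi_\phi}\circ\delta_H$, with $\bar\theta\circ i_{C_0(\ghat)}=j_{\ghat}$ and $\bar\theta\circ i_H=j_{C^*(H)}$.

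Next I would identify each outer system with the one named in the corollary. On the left, the Fourier transform $\FF_G:C^*(G)\to C_0(\ghat)$ carries multiplication by a character to translation: a short calculation shows that $\FF_G$ intertwines the $H$-action $\nu$ of \eqref{H action} with $\epsilon$ (up to the standard convention, any residual inversion on the abelian group $H$ being harmless since it is an automorphism of $H$). Hence $\FF_G$ induces an isomorphism $\FF_G\times H:C^*(G)\rtimes_\nu H\iso C_0(\ghat)\rtimes_\epsilon H$. On the right, the dictionary of \cite[Example~A.23]{enchilada} converts the coaction $\delta$ of $\ghat$ on $C^*(H)$ into an action $\mu$ of $G$ via $\mu_x=\bar{\id\otimes\FF^*(e_x)}\circ\delta$; evaluating this on the generators $i_H(t)$, where $\delta$ sends $i_H(t)\mapsto i_H(t)\otimes t$ and $\FF^*(e_x)$ is the character of $\ghat$ recorded in the Warning, reproduces formula \eqref{G action} (up to the conjugation convention fixed there). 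The same reference supplies the duality isomorphism $\Upsilon:C^*(H)\rtimes_\delta\ghat\iso C^*(H)\rtimes_\mu G$ with $\Upsilon\circ j_{C^*(H)}=i_{C^*(H)}$ and with $\Upsilon\circ j_{\ghat}$ sending the copy of $C_0(\ghat)$ onto the copy of $C^*(G)$ through $\FF_G\inv$.

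I would then set $\sigma=\Upsilon\circ\theta\circ(\FF_G\times H)$ and read off the explicit formula by chasing generators. The factor $i_{C^*(G)}(g)$ is sent successively to $i_{C_0(\ghat)}(\FF_G(g))$, then to $j_{\ghat}(\FF_G(g))$, and finally to $i_G(g)$; the factor $i_H(f)$ passes unchanged through $\FF_G\times H$, then to $j_{C^*(H)}(f)$, and finally to $i_{C^*(H)}(f)$. Thus $\sigma\bigl(i_{C^*(G)}(g)i_H(f)\bigr)=i_G(g)\,i_{C^*(H)}(f)$, and expanding this product in $C^*(H)\rtimes_\mu G\supset C_c(G,C^*(H))$ via the covariance relation gives the function $x\mapsto g(x)\mu_x(f)$; applying \eqref{G action} then yields $\sigma\bigl(i_{C^*(G)}(g)i_H(f)\bigr)(x)(t)=\bar{t(x)}\,g(x)\,f(t)$, as claimed.

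The main obstacle is bookkeeping rather than conceptual: pinning down the Fourier-transform conventions so that the conjugates and group-inversions come out consistently across the two separate dualities (this is exactly where the Warning preceding the corollary is needed), and tracking the generators cleanly through the triple composite so that the integrated-form expression lands on $\bar{t(x)}g(x)f(t)$ and not a convention-dependent variant. Once the convention is fixed, the two intertwining identities (the one for $\FF_G$ and the generator computation yielding $\mu$) are each short direct calculations, and the genuine content has already been carried by Theorem~\ref{abstract subgroup}.
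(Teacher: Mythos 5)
Your proposal is correct and takes essentially the same approach as the paper's own proof: apply Theorem~\ref{abstract subgroup} with $\ghat$ in the role of $G$ and $\phi$ the inclusion, transport the left-hand side through the Fourier transform, convert the coaction $\delta$ to the action $\mu$ via \cite[Example~A.23]{enchilada} and the isomorphism $\Upsilon$, repair the conjugation conventions by composing with inversion (harmless since the groups are abelian), and chase generators through the composite to get $\sigma\bigl(i_{C^*(G)}(g)i_H(f)\bigr)(x)(t)=\bar{t(x)}g(x)f(t)$. The paper carries out exactly these steps, including the same inversion adjustment on both $H$ and $G$ and the same covariance expansion $i_G(g)i_{C^*(H)}(f)=\int_G i_{C^*(H)}\bigl(g(x)\mu_x(f)\bigr)i_G(x)\,dx$.
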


\begin{proof}
This will follow quickly from \thmref{abstract subgroup} and the above facts relating coactions and actions, modulo one extra step: we will need to compose with the inverse in both $H$ and $G$ to get the actions in the precise form of the statement of the corollary. We do this in order to get as close as possible to Cuntz-Li's ``subgroup of dual group theorem'', and this extra adjustment is necessitated by our nonstandard convention for the Fourier transform.

The hypotheses tell us that, in the notation of \thmref{abstract subgroup} (but again with $G$ replaced by $\ghat$), the homomorphism $\phi\colon H\to \ghat$ is the inclusion map, so the action $\epsilon$ of $H$ on $C_0(\ghat)$ is just the restriction of $\lt$ to $H$. The Fourier transform
\[
\FF_G\inv\colon C_0(\ghat)\iso C^*(G)
\]
transforms the action $\epsilon$ to an action $\tilde\nu$ of $H$ on $C^*(G)$. We compute that for $t\in H$ the automorphism $\tilde\nu_t$ of $C^*(G)$ is the integrated form of the homomorphism $V_t\colon G\to M(C^*(G))$ given by
\[
V_t(x)
=\FF_G\inv\circ\epsilon_t\circ\overline\FF_G(x)
=\FF_G\inv\Bigl(\overline{t(x)}\FF_G(x)\Bigr)
=\overline{t(x)}x.
\]
Thus, for $g\in C_c(G)\subset C^*(G)$ we have
\[
\tilde \nu_t(g)=\int_G g(x)V_t(x)\,dx
=\int_G g(x)\overline{t(x)}x\,dx,
\]
so
\[
\tilde \nu_t(g)(x)=\overline{t(x)}g(x).
\]
On the other hand, we can let $\tilde\mu$ be the action of $G$ on $C^*(H)$ corresponding to the coaction $\delta$ of $\ghat$. Then for $x\in G$ the automorphism $\tilde\mu_x$ of $C^*(H)$ is the integrated form of the homomorphism $U_x\colon H\to M(C^*(H))$ given by
\begin{align*}
U_x(t)
&=\overline{\id\otimes \FF_{\ghat}^*(e_x)}\circ\overline\delta(t)
=\overline{\id\otimes \FF_{\ghat}^*(e_x)}(t\otimes t)
\\&=\FF_{\ghat}^*(e_x)(t)t
=t(x)t.
\end{align*}
Thus, for $f\in C_c(H)\subset C^*(H)$ we have
\[
\tilde\mu_x(f)(t)=t(x)f(t).
\]
We now compose with the inverse in both $H$ and $G$ to get actions $\nu$ of $H$ on $C^*(G)$ and $\mu$ of $G$ on $C^*(H)$ as in \eqref{H action} and \eqref{G action}, respectively.

Finally, for $g\in C_c(G)$ and $f\in C_c(H)$ we have
\begin{align*}
\sigma\bigl(i_{C^*(G)}(g)i_H(f)\bigr)
&=i_G(g)i_{C^*(H)}(f)
\\&=\int_G g(x)i_G(x)i_{C^*(H)}(f)\,dx
\\&=\int_G g(x)i_{C^*(H)}(\mu_x(f))i_G(x)\,dx
\\&=\int_G i_{C^*(H)}\bigl(g(x)\mu_x(f)\bigr)i_G(x)\,dx,
\end{align*}
so $\sigma\bigl(i_{C^*(G)}(g)i_H(f)\bigr)$ coincides with the element of $C_c(G,C_c(H))$ given by
\[
\sigma\bigl(i_{C^*(G)}(g)i_H(f)\bigr)(x)
=g(x)\mu_x(f),
\]
and evaluating this function at $t\in H$ gives
\[
\sigma\bigl(i_{C^*(G)}(g)i_H(f)\bigr)(x)(t)
=g(x)\mu_x(f)(t)
=\overline{t(x)}g(x)f(t).
\qedhere
\]
\end{proof}

Now we will present a third version of the ``subgroup of dual group theorem''. In contrast to the Cuntz-Li version, which involves actions on the group $C^*$-algebras of $G$ and $H$, for our purposes it will be more convenient to have a version of \thmref{abstract subgroup} with actions on the $C_0$-functions on both sides. Also, we will now switch the roles of $G$ and $\ghat$ back, and we will not require $H$ to embed injectively into $G$:

\begin{cor}\label{C0 subgroup}
Let $\phi\colon H\to G$ be a continuous homomorphism of locally compact abelian groups, with dual homomorphism
\[
\hat\phi\colon\ghat\to\hhat.
\]
Let
\[
\epsilon=\lt\circ\phi\midtext{and}\kappa=\rt\circ\hat\phi
\]
be the associated actions of $H$ on $C_0(G)$ and $\ghat$ on $C_0(\hhat)$, respectively. Then there is an isomorphism
\begin{equation}\label{tau}
\tau\colon C_0(G)\rtimes_\epsilon H \iso C_0(\hhat)\rtimes_\kappa \ghat
\end{equation}
such that
\[
\overline\tau\circ i_{C_0(G)}=i_{\ghat}\circ \FF_{\ghat}\inv
\midtext{and}
\overline\tau\circ i_H=i_{C_0(\hhat)}\circ \FF_H.
\]
\end{cor}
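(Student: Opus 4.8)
The plan is to deduce \corref{C0 subgroup} from \thmref{abstract subgroup} by transforming only the right-hand side into the desired $C_0$-form; the left-hand side needs no attention, since the action $\epsilon=\lt\circ\phi$ of $H$ on $C_0(G)$ is literally the same in both statements. Thus the whole argument concerns the passage
\[
C^*(H)\rtimes_\delta G\iso C_0(\hhat)\rtimes_\kappa \ghat,
\]
where $\delta=\bar{\id\otimes\pi_\phi}\circ\delta_H$ is the coaction produced by \thmref{abstract subgroup} (applied to $\phi:H\to G$, for which injectivity plays no role, in keeping with the remark preceding the statement). I would realize $\tau$ as a composite of three isomorphisms: the isomorphism $\theta$ of \thmref{abstract subgroup}; the isomorphism induced on crossed products by the Fourier transform $\FF_H:C^*(H)\to C_0(\hhat)$ acting on coefficients; and the action--coaction duality isomorphism for the abelian group $G$.

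First I would invoke \thmref{abstract subgroup} to get $\theta:C_0(G)\rtimes_\epsilon H\iso C^*(H)\rtimes_\delta G$ with $\bar\theta\circ i_{C_0(G)}=j_G$ and $\bar\theta\circ i_H=j_{C^*(H)}$. Since $\FF_H$ is an isomorphism of coefficient algebras, it carries $\delta$ to a coaction $\delta'$ of $G$ on $C_0(\hhat)$ and induces $\Psi:C^*(H)\rtimes_\delta G\iso C_0(\hhat)\rtimes_{\delta'} G$ with $\bar\Psi\circ j_{C^*(H)}=j_{C_0(\hhat)}\circ\FF_H$ and $\bar\Psi\circ j_G=j_G$. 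Finally, because $G$ is abelian, the duality between coactions of $G$ and actions of $\ghat$ recalled before \corref{abelian subgroup} (see \cite[Appendix~A]{enchilada}) yields an action $\kappa$ of $\ghat$ on $C_0(\hhat)$ and an isomorphism $\Upsilon:C_0(\hhat)\rtimes_{\delta'} G\iso C_0(\hhat)\rtimes_\kappa \ghat$ with $\bar\Upsilon\circ j_{C_0(\hhat)}=i_{C_0(\hhat)}$ and $\bar\Upsilon\circ j_G=i_{\ghat}\circ\FF_{\ghat}\inv$, where $\FF_{\ghat}:C^*(\ghat)\to C_0(G)$. Setting $\tau=\Upsilon\circ\Psi\circ\theta$ and chasing these identities through the composite delivers exactly $\bar\tau\circ i_{C_0(G)}=i_{\ghat}\circ\FF_{\ghat}\inv$ and $\bar\tau\circ i_H=i_{C_0(\hhat)}\circ\FF_H$.

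The substance of the proof is identifying $\kappa$ explicitly as $\rt\circ\hat\phi$, and this is the step I expect to be the main obstacle. I would argue on the unitary generators: the canonical coaction satisfies $\delta_H(u_t)=u_t\otimes u_t$, so $\delta(u_t)=u_t\otimes u_{\phi(t)}$ and hence $\delta'(\FF_H(u_t))=\FF_H(u_t)\otimes u_{\phi(t)}$ for $t\in H$. Writing the duality action as $\kappa_\sigma=\bar{\id\otimes[\sigma]}\circ\delta'$ for $\sigma\in\ghat$, where $[\sigma]\in B(G)$ is the character $\sigma$ of $G$, I would compute
\[
\kappa_\sigma\bigl(\FF_H(u_t)\bigr)=[\sigma]\bigl(u_{\phi(t)}\bigr)\,\FF_H(u_t)=\sigma(\phi(t))\,\FF_H(u_t)=\hat\phi(\sigma)(t)\,\FF_H(u_t).
\]
Under the chosen Fourier convention $\FF_H(u_t)$ is the function $\chi\mapsto\chi(t)$ on $\hhat$, so evaluating at $\chi\in\hhat$ gives $\hat\phi(\sigma)(t)\,\chi(t)=(\chi\,\hat\phi(\sigma))(t)=\FF_H(u_t)\bigl(\chi\,\hat\phi(\sigma)\bigr)$, which is $\rt_{\hat\phi(\sigma)}$ applied to $\FF_H(u_t)$; by density this gives $\kappa=\rt\circ\hat\phi$.

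The difficulty is entirely one of bookkeeping: one must fix the Fourier-transform convention consistently (as in the warning preceding \corref{abelian subgroup}), keep straight which copy of $C_0$ or $C^*$ each canonical map $i$ or $j$ is defined on, and verify that — in contrast to \corref{abelian subgroup} — the asymmetric choice of $\lt$ on the left and $\rt$ on the right makes all stray inverses cancel, so that $\kappa=\rt\circ\hat\phi$ appears with no inverse. Once the form of $\kappa$ is in hand, the verification of the two generator formulas for $\bar\tau$ is the routine composite chase indicated above.
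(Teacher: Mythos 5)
Your proposal is correct and follows essentially the same route as the paper: both factor $\tau$ through the isomorphism $\theta$ of \thmref{abstract subgroup}, the action--coaction duality of \cite[Appendix~A]{enchilada}, and the Fourier transform $\FF_H$, with the key point in each case being that the dual action multiplies by the character $\hat\phi(\sigma)=\sigma\circ\phi$, which the Fourier convention converts into right translation, yielding $\kappa=\rt\circ\hat\phi$ with no stray inverses. The only (immaterial) difference is the order of the last two steps: you apply $\FF_H$ to the coefficients first and then dualize the transported coaction $\delta'$, whereas the paper dualizes $\delta$ on $C^*(H)$ first (getting the action $\kappa^0_\chi(f)=(\chi\circ\phi)f$) and then applies $\FF_H\times\ghat$.
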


\begin{proof}
From \thmref {abstract subgroup} we have a coaction $\delta$ of $G$ on $C^*(H)$ and an isomorphism
\begin{equation}\label{theta}
\theta\colon C_0(G)\rtimes_\epsilon H\iso C^*(H)\rtimes_\delta G
\end{equation}
such that
\[
\theta\circ i_{C_0(G)}=j_G\midtext{and}\theta\circ i_H=j_{C^*(H)}.
\]
As we explained above \corref{abelian subgroup}, the coaction $\delta$ of $G$ corresponds to an action of $G$ on $C^*(H)$. We used this in \corref{abelian subgroup}, but there were a couple of differences between the contexts there and here, so to avoid confusion we do the computation anew, with fresh notation: we denote the associated action by $\kappa^0$, and compute that for $\chi\in \ghat$ the automorphism $\kappa^0_\chi$ of $C^*(H)$ is the integrated form of the homomorphism $R_\chi\colon H\to M(C^*(H))$ given by
\begin{align*}
R_\chi(t)
&=\bigl(\id\otimes\FF_G^*(e_\chi)\bigr)\circ\delta(t)
\\&=\bigl(\id\otimes\FF_G^*(e_\chi)\bigr)\bigl(t\otimes \phi(t)\bigr)
\\&=\chi(\phi(t))t,
\end{align*}
so that for $f\in C_c(H)\subset C^*(H)$ we have
\[
\kappa^0_\chi(f)=(\chi\circ\phi)f.
\]
Now, due to our convention regarding the Fourier transform, for $\zeta\in \hhat$ and $f\in C_c(H)$ we have
\[
\FF_H(\zeta f)=\rt_\zeta\circ\FF_H(f).
\]
Thus, the isomorphism $\FF_H\colon C^*(H)\to C_0(\hhat)$ carries the action $\kappa^0$ to an action $\kappa^1$ of $G$ on $C_0(\hhat)$ given by
\begin{align*}
\kappa^1_\chi(\FF_H(f))
&=\FF_H(\kappa^0_\chi(f))
\\&=\FF_H\bigl((\chi\circ\phi)f\bigr)
\\&=\rt_{\chi\circ\phi}(\FF_H(f)),
\end{align*}
so that $\kappa^1$ is given on $g\in C_0(\hhat)$ by
\[
\kappa^1_\chi(g)=\rt_{\hat\phi(\chi)}(g)
\]
and hence $\kappa^1$ agrees with the action $\kappa=\rt\circ\hat\phi$ defined in the statement of the corollary.

Now we trace the effects of the various transformations as we convert the isomorphism $\theta$ of \eqref{theta} to the isomorphism $\tau$ of \eqref{tau}: we have
\begin{align*}
\tau\circ i_{C_0(G)}
&=(\FF_H\times \ghat)\circ \Upsilon\circ \theta\circ i_{C_0(G)}
\\&=(\FF_H\times \ghat)\circ \Upsilon\circ j_G
\\&=(\FF_H\times \ghat)\circ i_{\ghat}^{C^*(H)}\circ \FF_{\ghat}\inv
\\&=i_{\ghat}^{C_0(\hhat)}\circ \FF_{\ghat}\inv
\end{align*}
and
\begin{align*}
\tau\circ i_H
&=(\FF_H\times \ghat)\circ \Upsilon\circ \theta\circ i_H
\\&=(\FF_H\times \ghat)\circ \Upsilon\circ j_{C^*(H)}
\\&=(\FF_H\times \ghat)\circ i_{C^*(H)}
\\&=i_{C_0(\hhat)}\circ \FF_H
\qedhere
\end{align*}
\end{proof}

\end{appendix}

\bibliographystyle{plain}

\end{document}